\newtheorem{theorem}{Theorem}
\newtheorem{definition}[theorem]{Definition}
\newtheorem{lemma}[theorem]{Lemma}
\newtheorem{corollary}[theorem]{Corollary}
\newtheorem{proposition}[theorem]{Proposition}
\numberwithin{equation}{section}
\numberwithin{theorem}{section}
\renewcommand{\epsilon}{\varepsilon}
\renewcommand{\rho}{\varrho}
\DeclareMathOperator{\supp}{supp}
\DeclareMathOperator*{\esssup}{ess\,sup}
\DeclareMathOperator*{\essinf}{ess\,inf}
\DeclareMathOperator*{\essosc}{ess\,osc}
\DeclareMathOperator*{\loc}{loc}
\DeclareMathOperator*{\diam}{diam}
\numberwithin{equation}{section}
\begin{document}

\title[continuity of solutions to singular parabolic equations]%
{On the continuity of solutions to doubly singular parabolic equations}
\author[Qifan Li]%
{Qifan Li*}

\newcommand{\acr}{\newline\indent}

\address{\llap{*\,}Department of Mathematics\acr
                   School of Sciences\acr
                   Wuhan University of Technology\acr
                   430070, 122 Luoshi Road,
                   Wuhan, Hubei\acr
                   P. R. China}
\email{qifan\_li@yahoo.com, qifan\_li@whut.edu.cn}



\subjclass[2010]{Primary 35R05, 35R35, 35D30; Secondary 35K59, 35K92.} 
\keywords{Two phase Stefan problem, Singular parabolic equations, Phase transition.}

\begin{abstract}
This paper considers a certain
doubly singular parabolic equations with one singularity occurs in the time derivative, whose model is
\begin{equation*} \partial_t\beta(u)-\operatorname{div}|Du|^{p-2}Du\ni0,\qquad \text{in}\quad \Omega\times(0,T)\end{equation*}
where $\Omega\subset\mathbb{R}^N$ and $N\geq3$.
We show that the bounded weak solutions are
locally continuous in the range
$$2-\epsilon_0\leq p<2,$$
provided $\epsilon_0>0$ is
small enough, and the continuity is stable as
$p\to2$.
\end{abstract}
\maketitle
\section{Introduction}
The aim of this paper is to establish a continuity result for solutions to singular parabolic equations
 related to the two phase Stefan problems of the type
\begin{equation}\label{two phase stefan}\partial_t\beta(u)-\operatorname{div}A(x,t,u,Du)\ni 0\qquad\text{in}\quad\Omega_T.\end{equation}
Here, the set $\Omega_T$ denotes the cylinder $\Omega\times(0,T)$ over an open bounded domain $\Omega\subset \mathbb{R}^{N}$
 with dimension $N\geq3$, and the function $\beta(\cdot)$ is defined by
\begin{equation}\label{beta}
	\beta(r)=\begin{cases}
	r,&\quad r>0,\\
	[-\bar\nu,0],&\quad r=0,\\ r-\bar\nu,&\quad r<0,
	\end{cases}
\end{equation}
where $\bar\nu>0$ is a given constant.
The vector field  $A(x,t,u,\zeta)$ is measurable in $\Omega_T\times
 \mathbb{R}\times\mathbb{R}^{N}$ and satisfy the structure conditions:
 \begin{equation}\label{A}
	\begin{cases}
	A(x,t,u,\zeta)\cdot\zeta\geq C_0|\zeta|^{p},\\
	|A(x,t,u,\zeta)|\leq C_1|\zeta|^{p-1},
	\end{cases}
\end{equation}
where $C_0$ and $C_1$ are given positive constants. No attempt has been made here to discuss the equations with lower order terms.
A motivation for this study comes
from phase transitions for the material obeying a Fourier's law (see \cite{LSU, R, U1997}).
The continuity of weak solutions for the case $A(x,t,u,Du)=Du$ was first proved by Caffarelli and Evans \cite{CE}. This result was
extended by DiBenedetto \cite{Di82} to the quasilinear equations with general structure. The proof is based on the
De Giorgi's technique and a construction of logarithmic functions to prove the expansion of positivity in time direction.
Furthermore, Urbano \cite{U2000} established the continuity result for the degenerate case
$p\geq2$ by using the De Giorgi's technique together with the intrinsic scaling method. This idea goes back at least as far as \cite{Di82}.

The treatment of the singular case $p<2$ is much more difficult.
This problem was studied by Henriques and Urbano \cite{HU}. They introduced the intrinsic parabolic cylinders of the form
\begin{equation*}[(\bar x,\bar t)+Q(dR^p,d_*R)],\qquad d=\left(\frac{\omega}{2}\right)^{(1-p)(2-p)}\quad\text{and}
\quad d_*=\left(\frac{\omega}{2^{n_*+1}}\right)^{p-2}\end{equation*}
and use this kind of cylinders to perform alternative arguments.
However, we point out that there is a gap in the proof of Lemma 1 (\cite[page 930, line 8]{HU}).
The integral involving the term $|\nabla (\bar \theta-\bar k_n)_-\bar \xi_n|^p$ must be multiplied
by a factor $c_0^{-p}$ due to the chain rule. If we multiply such factor, the constant $\nu_0$ in \cite[page 927]{HU} will depend
upon $2^{-n_*}$ which is so small that we cannot determine the number $n_*$ in terms of $\nu_0$ (see \cite[Lemma 4, page 938, line -1]{HU}).
Of course, this is not meant to diminish the importance of Henriques and Urbano's pioneering work, but rather to point out that
it does not resolve the problem and a correct proof is also needed.

In this work, we try to find an idea to overcome the difficulty mentioned above. We shall use the following intrinsic
parabolic cylinders
\begin{equation*}[(\bar x,\bar t)+Q(dR^p,d_*R)],\quad d=\left(\frac{\omega}{2^{m_1}}\right)^{(1-p)(2-p)}\quad\text{and}
\quad d_*=\left(\frac{\omega}{2^{m_2}}\right)^{p-2}\quad\text{where}\quad m_1=\frac{pm_2}{p-1}\end{equation*}
for the alternative arguments. Since the length of the time interval is too large, we cannot simply use the idea from
\cite[Lemma 4]{HU}) to determine the quantity $m_2$ and the analysis of the second alternative is much more delicate.
In order to obtain a result of expansion positivity in such large time interval,
we shall apply an iterative argument. However,
our approach only works for the range
$2-\epsilon_0\leq p<2,$
where $\epsilon_0$ is
a small positive constant depending only upon the data. We have to impose this artificial condition, because the integrals involving the quantity $[\psi_{\pm}']^{2-p}$,
where $\psi_{\pm} $ is a logarithmic function, are too large in the singular range.
Moreover, we can also show that the continuity is stable as $p\to2$.
As for the proof of the full range $1<p<2$, we leave this problem open to the further study.

An outline of this paper is as follows. We provide some preliminary material and state the main result in \S 2, while \S 3 provides an exposition
of Caccioppoli estimates.
In \S 4, we introduce the
intrinsic parabolic cylinders for the alternative arguments and get started the proof of the main result. \S 5 is devoted to the analysis of
the first alternative. We obtain a
decay estimate for the oscillation of weak solution in a smaller cylinder. Subsequently, \S 6 is intended to prove a similar estimate
in the second alternative.
Finally, in \S 7, we finish the proof of the main result by a recursive argument.
\section{Preliminary material and main result}
We follow the notation of \cite{Di93}.
Throughout the paper, $\Omega$ denote a bounded open set in $\mathbb{R}^N$ with $N\geq 3$ and $\Omega_T=\Omega\times(0,T)$ is the associated
space-time cylinder.
We let $Du=(D_{x_1}u,D_{x_2}u,\cdots,D_{x_N}u)$ stand for the differentiation with respect to the space variables and let $\partial_tu$ denote the time derivative. The derivatives are teken in the weak sense. Points
in $\mathbb{R}^{N+1}$ will be denoted by $(x,t)$, where $x\in \mathbb{R}^{N}$ and $t\in \mathbb{R}$. We shall use cubes of the form
$$K_r=\{x=(x_1,x_2,\cdots,x_N)\in\mathbb{R}^N:|x_i|\leq r,\ i=1,2,\cdots ,N\}.$$
Concerning parabolic cylinders, we use the cylinders of the form $Q(r_1,r_2)=K_{r_2}\times(-r_1,0)$.
Let $\partial K_{r_2}$ be the boundary of $K_{r_2}$ and
$\partial_PQ(r_1,r_2)=[\partial K_{r_2}\times(-r_1,0)]\cup [K_{r_2}\times\{0\}]$
denotes the parabolic boundary of $Q(r_1,r_2)$.
Let $r'>r>0$, we construct the piecewise smooth function, with $\zeta\equiv 1$ in $K_r$, $\supp\zeta\subset K_{r'}$ and $|D\zeta|\leq (r'-r)^{-1}$, by
\begin{equation}\label{zeta}\zeta(x)=\prod_{i=1}^N\zeta_i(x_i)\end{equation}
where $x=(x_1,x_2,\cdots,x_N)$ and
\begin{equation}\label{zetai}
	\zeta_i(x_i)=\begin{cases}
0,\qquad &\text{if}\quad |x_i|>r',\\
\dfrac{x_i+r'}{r'-r},\qquad &\text{if}\quad -r'<x_i<-r,\\
1,\qquad &\text{if}\quad |x_i|\leq r,\\
\dfrac{r'-x_i}{r'-r},\qquad &\text{if}\quad r<x_i<r'.
	\end{cases}
\end{equation}
We set $W^{1,p}(\Omega)$ for the Sobolev space of weakly differentiable functions
$v:\Omega\to\mathbb{R}$ with $|v|,|Dv|\in L^p(\Omega)$. Similarly, a function $v\in W_{\loc}^{1,p}(\Omega)$ if $v\in W^{1,p}(K)$ for
all compact set $K\subset \Omega$.
We denote by $W_0^{1,p}(\Omega)$ the closure of $C_0^\infty(\Omega)$
with respect to the norm in $W^{1,p}(\Omega)$. For any $k\in \mathbb{R}$ and a function $v\in W^{1,p}(\Omega)$, the truncations
are defined by
\begin{equation*}\begin{split}(v-k)_+&=\max\{v-k,0\}
\\(v-k)_-&=\max\{k-v,0\}.\end{split}\end{equation*}
Let $B$ be a Banach space and $1\leq q<\infty$. We write $L^q(0,T,B)$
for the space of $L^q$ integrable functions from $[0,T]$ into $B$, which is a Banach space with the norm
$$\|v\|_{L^q(0,T;B)}=\left(\int_0^T\|v(t)\|_B^qdt\right)^{\frac{1}{q}}.$$
Moreover, we write $L_{\loc}^q(0,T,B)$ for space of functions $v\in L^q(t_1,t_2,B)$ for all intervals $[t_1,t_2]\subset(0,T]$.
Let us denote by $C_{\loc}(0,T;B)$ the space of locally continuous functions from $[0,T]$ into $B$. Finally, we denote by $W_p(\Omega_T)$ the set
of functions
$$v\in L_{\loc}^p(0,T;W_0^{1,p}(\Omega))\qquad\text{and}\qquad \partial_tv\in L_{\loc}^2(\Omega_T).$$
We are now ready to give the definition of the weak solution to the doubly singular parabolic equations
\eqref{two phase stefan}-\eqref{A}:
\begin{definition}A function $u:\Omega_T\to \mathbb{R}$ is said to be a local weak solution of \eqref{two phase stefan}-\eqref{A}, if
\begin{equation*}u\in C_{\loc}(0,T;L_{\loc}^2(\Omega))\cap L_{\loc}^p(0,T;W_{\loc}^{1,p}(\Omega))\end{equation*}
and there exists a function $w\subset\beta(u)$, with
the inclusion being intended in the sense of the graphs, such that the identity
\begin{equation}\label{weak form two phase stefan}\int_\Omega w(\cdot,t)\varphi(\cdot,t)dx\bigg|_{t=t_0}^{t_1}+\int_{t_0}^{t_1}\int_\Omega
\left[-w\partial_t \varphi+A(x,t,u,Du)\cdot D\varphi\right]dxdt=0 \end{equation}
holds for all testing functions $\varphi\in W_p(\Omega_T)$ and all intervals $[t_0,t_1]\subset(0,T]$.
\end{definition}
Throughout the paper, we assume that the weak solution is bounded and its time derivative is locally square integrable. To be
more precise, we assume that
\begin{equation}\label{boundedness}\esssup_{\Omega_T}|u|\leq 1,\end{equation} and the time derivative $\partial_t u$ exists and satisfies
\begin{equation}\label{integrability}\partial_t u
\in L_{\loc}^2(\Omega_T).\end{equation}
The
statement that a constant $C$ depends only upon the data, means that
it can be determined a priori only in terms of $\{\bar \nu,\ N,\ C_0,\ C_1\}$. Since we are concerned about the behaviour of constants as $p\to2$,
exponent $p$ should be excluded from the concept
of the data.
We can now state our main result:
\begin{theorem}\label{main theorem}
There exists a constant $\epsilon_0>0$, that can be determined
a priori only in terms of the data, such that the following holds:
Let $u$ be a weak solution to the singular parabolic equation \eqref{two phase stefan}-\eqref{A} with $p\in[2-\epsilon_0,2)$. Suppose that
the assumptions
\eqref{boundedness} and \eqref{integrability} are in force. Then the weak solution $u$ is locally continuous and the continuity is stable as
$p\to2$.
\end{theorem}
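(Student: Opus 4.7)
The plan is to prove Theorem \ref{main theorem} by establishing a quantitative reduction of the essential oscillation on suitably chosen intrinsic cylinders, and then iterating this decay to obtain a modulus of continuity. Fix a point $(\bar x,\bar t) \in \Omega_T$ and consider an initial cylinder $Q_0 = (\bar x,\bar t) + Q(R_0^p, R_0)$ lying well inside $\Omega_T$. Let $\mu^\pm$ denote the essential supremum and infimum of $u$ on $Q_0$ and set $\omega = \mu^+ - \mu^-$. I would then work with the intrinsic cylinders $Q_R = (\bar x,\bar t) + Q(dR^p, d_* R)$, with $d = (\omega/2^{m_1})^{(1-p)(2-p)}$ and $d_* = (\omega/2^{m_2})^{p-2}$ and $m_1 = pm_2/(p-1)$ as suggested in the introduction; the coupling between $m_1$ and $m_2$ ensures that the degenerate scaling from the $p$-Laplacian term and the Stefan-type scaling forced by the jump of $\beta$ fit together on the same cylinder. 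I would also check that the parameters $m_1,m_2$ can be fixed once and for all in terms of the data so that $Q_R \subset Q_0$ for $R$ sufficiently small.

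The main step is to prove an oscillation-decay estimate: there exist $\eta\in(0,1)$ and $\sigma\in(0,1)$ depending only on the data such that $\essosc_{Q_{\sigma R}} u \leq \eta\,\omega$. This is obtained via the classical \emph{alternative} dichotomy. Set $k_n = \mu^+ - \omega/2^n$ and consider the measure of the set $\{u > k_n\}$ on time slices of $Q_R$. The first alternative is the case where this measure is small on a substantial portion of times. Here I would apply the Caccioppoli inequalities from Section~3 to the truncated function $(u-k_n)_+$ and run a De Giorgi iteration, choosing $n$ large enough for the smallness hypothesis to propagate through the iteration lemma and yield $u \leq \mu^+ - \omega/2^{n+1}$ on a smaller intrinsic cylinder. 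A symmetric argument with $(u-k_n)_-$ handles the situation near $\mu^-$.

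The delicate step, and what I expect to be the main obstacle, is the second alternative. Here one extracts a time slice at which the good set has positive density and must then propagate this information forward across the entire time span $dR^p$, which is comparatively long in the singular regime. The standard tool is a logarithmic test function $\psi_\pm(u)$ inserted into the weak formulation, which gives a pointwise-in-time control of logarithmic superlevel sets. When $p<2$, however, the corresponding Caccioppoli estimate picks up factors of the form $[\psi_\pm']^{2-p}$ that grow without bound as $\omega \to 0$, and these cannot be absorbed into the diffusion unless $2-p$ is uniformly small. To deal with the long time span I would iterate the expansion-of-positivity through a chain of shorter subintervals whose number is governed by $m_2$; each step shrinks the exceptional set by a small quantitative amount, and after $O(m_2)$ steps one recovers a definite gain that matches the first alternative. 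The constraint $p \geq 2 - \epsilon_0$ arises precisely as the threshold under which this iteration converges, and the stability as $p \to 2$ follows because every exponent involving $2-p$ tends to~$1$, so the dependence of the constants on $(2-p)$ is continuous up to the limit.

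Having established the oscillation-decay estimate, I would conclude by a standard recursive construction. Starting from $R_0$ and $\omega_0 = \omega$, set $R_{k+1} = \sigma R_k$ and $\omega_{k+1} = \eta \omega_k$ and check that the intrinsic cylinder inclusion $Q_{R_{k+1}} \subset Q_{R_k}$ is preserved along the iteration, exploiting that the dependence of $d$ and $d_*$ on $\omega$ is through powers of order $(2-p)$. This yields $\essosc_{Q_{R_k}} u \leq \eta^k \omega_0$ and therefore a logarithmic modulus of continuity that is uniform in $p \in [2-\epsilon_0, 2)$, giving both continuity and stability as $p\to 2$. A technical point I would keep in mind throughout is the role of $\beta$ at its jump: when integrating by parts in time one has to pass from $u$ to the multi-valued selection $w \in \beta(u)$ through Steklov averaging and invoke the monotonicity of $\beta$, so that all oscillation reductions obtained for $u$ can be transferred consistently into the weak formulation \eqref{weak form two phase stefan}.
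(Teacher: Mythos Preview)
Your outline follows the same architecture as the paper (intrinsic cylinders, alternative dichotomy, logarithmic expansion in time, iterative propagation across the long interval), but there are two genuine gaps that would prevent the argument from closing.

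\textbf{The decay factor cannot be chosen independent of $\omega$.} You claim an estimate $\essosc_{Q_{\sigma R}} u \le \eta\,\omega$ with $\eta,\sigma$ depending only on the data, and then iterate geometrically. In the paper this is false: the Stefan jump produces in the Caccioppoli estimate for $(u-k)_-$ an extra term of the form $\bar\nu\iint (u-k)_-\,\partial_t\zeta$, which scales like $\omega$ rather than $\omega^2$. Tracking this through the De Giorgi iteration forces the threshold $\nu_0$ in the first alternative to carry a factor $\omega^{N+1}$ (see \eqref{nu0}). This cascades through $\tilde n$, $\tilde m$, and the choice of $m_2$ in \eqref{m2f}, so that $m_1,m_2$ and the final exponent $s$ in Proposition~\ref{2nd alt pro} all depend on $\omega$, with $s\uparrow\infty$ as $\omega\downarrow 0$. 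The resulting recursion is $\omega_{n+1}=\sigma(\omega_n)\omega_n$ with $\sigma(\omega_n)\uparrow 1$, not $\omega_{n+1}=\eta\,\omega_n$; your standard geometric iteration in the last paragraph does not apply, and the modulus of continuity is of iterated-logarithm type, not H\"older or logarithmic.

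\textbf{There is a second branch you have not accounted for.} In the second alternative, each time-propagation step from $t^{(k)}$ to $t^{(k+1)}$ is coupled to a spatial expansion from $K_{R^{(k)}}$ to $K_{2R^{(k)}}$ (carried out via a dimensionless change of variables and the Alkhutov--Zhikov logarithmic technique, Lemma~\ref{dimension less lemma}); after $j_*$ iterations the spatial cube may escape the reference cylinder $\overline Q$. When this happens one does \emph{not} get oscillation decay but instead a direct inequality $\omega \le (\bar C/\ln\ln\ln\ln R^{-\kappa})^{\bar\alpha}$ (Proposition~\ref{2nd alt pro}). The final recursion in \S7 must therefore branch at every step, and the construction of the nested cylinders $Q_{n+1}$ is a case analysis over which of the $\omega_j$ the current oscillation lies between --- not the simple $R_{k+1}=\sigma R_k$ you describe. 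Without building this dichotomy into the iteration, the argument does not terminate in a modulus of continuity.
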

Finally, we collect here two results regarding Sobolev inequalities that will be of use later.
\begin{lemma}\label{Sobolev1}\cite[Chapter I, Proposition 2.1]{Di93} Let $\Omega\subset \mathbb{R}^N$ be a bounded convex set and let $\varphi\in C(\Omega)$ satisfy $0\leq\varphi\leq1$ and the sets $[\varphi\geq k]$ are convex for all $k\in(0,1)$. Let $v\in W^{1,p}(\Omega)$ with
 $1<p\leq 2$ and assume that
$|[v=0]\cap[\varphi=1]|>0$. Then there exists a constant $\gamma$ depending only upon $N$, such that
\begin{equation}\label{Sobolevf1}\left(\int_\Omega\varphi|v|^pdx\right)^{\frac{1}{p}}\leq \gamma\frac{(\diam\Omega)^N}{
|[v=0]\cap[\varphi=1]|^{\frac{N-1}{N}}}\left(\int_\Omega\varphi|Dv|^pdx\right)^{\frac{1}{p}}.\end{equation}
\end{lemma}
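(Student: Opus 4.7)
The strategy is to reduce the weighted inequality to an unweighted Poincar\'e--Sobolev inequality on each convex super-level set of $\varphi$, via a layer-cake slicing. Set $E:=[v=0]\cap[\varphi=1]$, $d:=\diam\Omega$, and for $k\in(0,1)$ let $D_k:=[\varphi\geq k]$. By hypothesis each $D_k$ is convex with $\diam D_k\leq d$, and $E\subset[\varphi=1]\subset D_k$. The identity $\varphi(x)=\int_0^1\mathbf{1}_{D_k}(x)\,dk$ together with Fubini's theorem gives
\begin{equation*}
\int_\Omega\varphi|v|^p\,dx=\int_0^1\!\int_{D_k}|v|^p\,dx\,dk,\qquad \int_\Omega\varphi|Dv|^p\,dx=\int_0^1\!\int_{D_k}|Dv|^p\,dx\,dk.
\end{equation*}
Hence it suffices to prove, uniformly in $k$, the unweighted estimate $\int_{D_k}|v|^p\,dx\leq\gamma^{\,p}\,d^{Np}|E|^{-p(N-1)/N}\int_{D_k}|Dv|^p\,dx$ with $\gamma=\gamma(N)$; integrating in $k$ and extracting the $p$-th root then recovers the lemma.

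The remaining task is a Poincar\'e--Sobolev inequality on a bounded convex set $D$ for functions vanishing on a set of positive measure. Denoting by $v_D$ the mean of $v$ on $D$, I would start from the Riesz potential representation, which holds on any convex $D$,
\begin{equation*}
|v(x)-v_D|\;\leq\;C(N)\int_D\frac{|Dv(z)|}{|x-z|^{N-1}}\,dz,
\end{equation*}
and combine it with the Hardy--Littlewood--Sobolev inequality to deduce $\|v-v_D\|_{L^{p^\ast}(D)}\leq C(N,p)\|Dv\|_{L^p(D)}$ with $p^\ast=Np/(N-p)$. Since $v\equiv 0$ on $E$,
\begin{equation*}
|v_D|\,|E|^{1/p^\ast}=\|v_D\mathbf{1}_E\|_{L^{p^\ast}}=\|(v_D-v)\mathbf{1}_E\|_{L^{p^\ast}}\leq\|v-v_D\|_{L^{p^\ast}(D)}\leq C(N,p)\|Dv\|_{L^p(D)},
\end{equation*}
so $|v_D|\leq C(N,p)|E|^{-1/p^\ast}\|Dv\|_{L^p(D)}$. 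The triangle inequality $\|v\|_p\leq\|v-v_D\|_p+|v_D||D|^{1/p}$, H\"older's inequality $\|v-v_D\|_p\leq|D|^{1/N}\|v-v_D\|_{p^\ast}$, and the trivial bound $|D|\leq d^N$ assemble to
\begin{equation*}
\|v\|_{L^p(D)}\;\leq\;C(N,p)\bigl(d+d^{N/p}|E|^{-(1/p-1/N)}\bigr)\|Dv\|_{L^p(D)}.
\end{equation*}
Since $|E|\leq|D|\leq d^N$, both terms are absorbed into the single factor $C(N,p)\,d^N|E|^{-(N-1)/N}$ (for the second term this uses $(N-1)/N-(1/p-1/N)=1-1/p$ and $|E|^{1-1/p}\leq d^{N(1-1/p)}$).

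The main obstacle is the uniformity of $C(N,p)$ in the parameter $p\in(1,2]$. The Hardy--Littlewood--Sobolev constant is continuous in $p$ on the open interval $(1,N)$, and $(1,2]$ is a compact subset for $N\geq 3$, so $C(N,p)\leq C(N)$ throughout the relevant range. With this uniformity in hand, the Poincar\'e--Sobolev estimate on each slice $D_k$ combines with the layer-cake identity and Fubini's theorem to produce the claimed weighted inequality, with a constant $\gamma$ depending only on $N$.
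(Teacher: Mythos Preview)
The paper does not give a proof of this lemma; it simply quotes it from DiBenedetto's book, so there is nothing in the paper to compare your argument against directly. Your layer-cake reduction to the convex super-level sets $D_k=[\varphi\ge k]$ is the natural route and is in the spirit of the cited reference. However, the argument has a genuine gap at the Riesz potential step.

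You claim that on an arbitrary convex $D$,
\[
|v(x)-v_D|\le C(N)\int_D\frac{|Dv(z)|}{|x-z|^{N-1}}\,dz,
\]
and hence $\|v-v_D\|_{L^{p^\ast}(D)}\le C(N,p)\|Dv\|_{L^p(D)}$ with a constant independent of the shape of $D$. This is false. The correct potential representation on a convex set carries the geometric factor $(\diam D)^N/|D|$:
\[
|v(x)-v_D|\le \frac{(\diam D)^N}{N\,|D|}\int_D\frac{|Dv(z)|}{|x-z|^{N-1}}\,dz,
\]
so the Sobolev--Poincar\'e constant is $C(N,p)\,(\diam D)^N/|D|$, which is unbounded over thin convex sets. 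A concrete counterexample to your stated inequality is $D=[0,1]\times[0,\epsilon]^{N-1}$ with $v(x)=x_1-\tfrac12$: one computes $\|v-v_D\|_{L^{p^\ast}}/\|Dv\|_{L^p}\sim \epsilon^{-(N-1)/N}\to\infty$ as $\epsilon\to 0$.

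The fix is minor and your overall scheme survives it. Carrying the factor $d^N/|D_k|$ through your two estimates and then using $|E|\le|D_k|$, one gets
\[
\|v-v_{D_k}\|_{L^p(D_k)}\le C\,\frac{d^N}{|D_k|^{(N-1)/N}}\|Dv\|_{L^p(D_k)}\le C\,\frac{d^N}{|E|^{(N-1)/N}}\|Dv\|_{L^p(D_k)},
\]
and
\[
|v_{D_k}|\,|D_k|^{1/p}\le C\,\frac{d^N}{|D_k|^{1-1/p}|E|^{1/p-1/N}}\|Dv\|_{L^p(D_k)}\le C\,\frac{d^N}{|E|^{(N-1)/N}}\|Dv\|_{L^p(D_k)},
\]
so the dependence on $|D_k|$ disappears entirely. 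After this correction, the layer-cake integration and your remark on the uniform boundedness of the Hardy--Littlewood--Sobolev constant for $p\in(1,2]$ (with $N\ge 3$) complete the proof.
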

\begin{lemma}\label{Sobolev}\cite[Chapter I, Proposition 3.1]{Di93} Let $\Omega\subset \mathbb{R}^N$ be a bounded set and $N\geq 3$. Assume that
$v\in L^\infty(0,T;L^2(\Omega))\cap L^p(0,T;W_0^{1,p}(\Omega))$ and $1<p\leq2$. Then there exists a constant $\gamma$ depending only upon $N$ such that
\begin{equation}\label{Sobolevf}\iint_{\Omega_T}|v|^{p\frac{N+2}{N}}dxdt\leq \gamma\left(\iint_{\Omega_T}|Dv|^pdxdt\right)
\left(\esssup_{0<t<T}\int_\Omega |v|^2dx\right)^{\frac{p}{N}}.\end{equation}
\end{lemma}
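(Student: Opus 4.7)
The plan is the classical parabolic Gagliardo--Nirenberg argument: at each time slice apply the elliptic Sobolev inequality to convert $\|Dv\|_{L^{p}}$ into $\|v\|_{L^{p^{\ast}}}$, perform a single H\"older splitting in $x$ tailored to the target exponent $p(N+2)/N$, and then integrate in time, absorbing the remaining $L^{2}$--norm factor into the essential supremum. The hypothesis $N\geq 3$ together with $p\leq 2$ yields $p<N$, which is the only structural input needed in order to apply Sobolev.

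At a.e.\ $t\in(0,T)$, since $v(\cdot,t)\in W_{0}^{1,p}(\Omega)$ can be extended by zero to $\mathbb{R}^N$, I would invoke the standard Sobolev embedding to get
$$\|v(\cdot,t)\|_{L^{p^{\ast}}(\Omega)}\leq \gamma_{1}\|Dv(\cdot,t)\|_{L^{p}(\Omega)},\qquad p^{\ast}=\frac{Np}{N-p},$$
and then exploit the decomposition $q:=p(N+2)/N=p+2p/N$ by writing
$$\int_{\Omega}|v|^{q}\,dx=\int_{\Omega}|v|^{p}\,|v|^{2p/N}\,dx.$$
H\"older's inequality with conjugate exponents $\sigma=N/(N-p)$ and $\sigma'=N/p$ turns the first factor into $\|v\|_{L^{p^{\ast}}(\Omega)}^{p}$ (because $p\sigma=p^{\ast}$ and $p^{\ast}/\sigma=p$) and the second into $\|v\|_{L^{2}(\Omega)}^{2p/N}$ (because $(2p/N)\sigma'=2$). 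Combined with the Sobolev estimate, this produces the slicewise bound
$$\int_{\Omega}|v(\cdot,t)|^{q}\,dx\leq \gamma_{1}^{p}\,\|Dv(\cdot,t)\|_{L^{p}(\Omega)}^{p}\,\|v(\cdot,t)\|_{L^{2}(\Omega)}^{2p/N}.$$

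Integrating this inequality over $t\in(0,T)$ and pulling the $L^{2}$--norm outside the time integral as its essential supremum gives
$$\iint_{\Omega_T}|v|^{q}\,dx\,dt\leq \gamma_{1}^{p}\Bigl(\esssup_{0<t<T}\int_{\Omega}|v|^{2}\,dx\Bigr)^{p/N}\iint_{\Omega_T}|Dv|^{p}\,dx\,dt,$$
which is precisely \eqref{Sobolevf}. There is no genuine obstacle, only a bookkeeping point: the Sobolev constant $\gamma_{1}$ is in principle $p$--dependent, but for $1<p\leq 2$ with $N\geq 3$ the exponent $p$ stays uniformly bounded away from $N$ (so $p^{\ast}\in[N/(N-1),2N/(N-2)]$), hence $\gamma_{1}$ can be dominated by a constant depending only on $N$, which is exactly why the final constant $\gamma$ in \eqref{Sobolevf} depends only on $N$.
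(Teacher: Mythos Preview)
Your proof is correct and follows essentially the same route as the paper: slicewise Sobolev inequality for $W_0^{1,p}$, the H\"older splitting $|v|^{p(N+2)/N}=|v|^p\cdot|v|^{2p/N}$ with exponents $N/(N-p)$ and $N/p$, then integration in $t$ with the $L^2$ factor pulled out as an essential supremum. The only difference is cosmetic: the paper quotes the explicit Sobolev constant $(N-1)p/(N-p)$ from \cite{LSU} and bounds it by $2(N-1)$ (using $p\leq 2\leq N-1$), arriving at $\gamma=4(N-1)^2$, whereas you argue abstractly that the constant is uniform in $p$ since $p$ stays bounded away from $N$.
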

\begin{proof}We only need to check that the constant $\gamma$ is independent of $p$. For any fixed $t\in(0,T)$, we apply \cite[Page 64 (2.13)]{LSU} to
obtain
$$\left\|v(\cdot,t)\right\|_{L^{\frac{Np}{N-p}}(\Omega)}\leq \frac{(N-1)p}{N-p}\|Dv(\cdot,t)\|_{L^p(\Omega)}\leq 2(N-1)\|Dv(\cdot,t)\|_{L^p(\Omega)},$$
since $N\geq 3$ and $1<p\leq 2$. Based on the argument of DiBenedetto \cite[page 8]{Di93}, we obtain
\begin{equation*}\begin{split}\iint_{\Omega_T}|v|^{p\frac{N+2}{N}}dxdt&\leq \int_0^T\left\|v(\cdot,t)\right\|_{L^{\frac{Np}{N-p}}(\Omega)}^p
\|v(\cdot,t)\|_{L^2(\Omega)}^{\frac{2p}{N}}dt\\&\leq4(N-1)^2\left(\iint_{\Omega_T}|Dv|^pdxdt\right)
\left(\esssup_{0<t<T}\int_\Omega |v|^2dx\right)^{\frac{p}{N}},
\end{split}\end{equation*}
where the constant on the right-hand side is independent of $p$.
\end{proof}
\section{Local energy estimates}
In this section we state two energy estimates for the cutoff functions $(u-k)_{\pm}$ where $k\in \mathbb{R}$ is a fixed number.
From the assumption \eqref{integrability}, the time derivative exists in the weak sense and square integrable.
This enables us to proceed similarly to the discussions in \cite[page 133]{Di82}. To this end, we introduce the auxiliary function
\begin{equation*}
	v(x,t)=\begin{cases}
	\bar\nu,&\quad \text{on}\quad[u<0],\\
	-w(x,t),&\quad  \text{on}\quad[u=0],
	\end{cases}
\end{equation*}
and the weak form \eqref{weak form two phase stefan} can be rewritten as
\begin{equation}\begin{split}\label{frequently use weak form two phase stefan}-\int_\Omega v(\cdot,t)\chi_{[u\leq 0]}&\varphi(\cdot,t)dx\bigg|_{t=t_0}^{t_1}+\int_{t_0}^{t_1}\int_\Omega v\chi_{[u\leq 0]}\partial_t \varphi dxdt
\\&+\int_{t_0}^{t_1}\int_\Omega
\left[\varphi\partial_t u+A(x,t,u,Du)\cdot D\varphi\right]dxdt=0 \end{split}\end{equation}
where $\varphi\in W_p(\Omega_T)$ and $[t_0,t_1]\subset(0,T]$. The appearance of the first two terms in \eqref{frequently use weak form two phase stefan} is due to the singularity in time derivative and this can cause an extra difficulty in the proof.
From now on, we denote the first two terms in \eqref{frequently use weak form two phase stefan} by the quantity
\begin{equation*}
U(\Omega,t_0,t_1,\varphi)=\int_\Omega v(\cdot,t)\chi_{[u\leq 0]}\varphi(\cdot,t)dx\bigg|_{t=t_0}^{t_1}-\int_{t_0}^{t_1}\int_\Omega v\chi_{[u\leq 0]} \partial_t\varphi dxdt.
\end{equation*}
Let $\bar x\in \Omega$ and $K$ be a cube centered at origin such that $[\bar x+K]\subset\Omega$. We denote by
$\zeta$ a piecewise smooth function in $[\bar x+K]\times [t_0,t_1]$ such that
\begin{equation}\label{def zeta}0\leq\zeta\leq1, \quad|D\zeta|<\infty\quad\text{and}\quad\zeta=0\quad\text{
on}\quad\partial [\bar x+K].\end{equation}
By substituting $\varphi=\pm(u-k)_{\pm}\zeta^p$ into \eqref{frequently use weak form two phase stefan}, we obtain the following
proposition, whose proof we omit (see \cite[Chapter II, Proposition 3.1]{Di93}).
\begin{proposition}
Let $u$ be a weak solution of \eqref{two phase stefan}-\eqref{A} in $\Omega_T$. There exists a constant $\gamma_1$ that can be determined
a priori only in terms of the data such that
\begin{equation}\begin{split}\label{Caccioppoli}
\esssup_{t_0<t<t_1}&\int_{[\bar x+K]\times\{t\}}(u-k)_{\pm}^2\zeta^p dx
+\int_{t_0}^{t_1}\int_{[\bar x+K]} |D(u-k)_{\pm}\zeta|^p dxdt\\
\leq &\int_{[\bar x+K]\times\{t_0\}}(u-k)_{\pm}^2\zeta^p dx+\gamma_1\int_{t_0}^{t_1}\int_{[\bar x+K]} (u-k)_{\pm}^p|D\zeta|^pdxdt\\&+\gamma_1
\int_{t_0}^{t_1}\int_{[\bar x+K]} (u-k)_{\pm}^2|\partial_t\zeta|dxdt+
U([\bar x+K],t_0,t_1,\pm(u-k)_{\pm}\zeta^p).\end{split}\end{equation}
\end{proposition}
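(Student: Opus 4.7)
The plan is to insert $\varphi=\pm(u-k)_\pm\zeta^p$ into the weak form \eqref{frequently use weak form two phase stefan} and carefully collect the resulting terms, in the spirit of \cite[Chapter II, Proposition 3.1]{Di93}. Because this test function depends on $u$ itself and the equation is not a priori differentiable in time, I would make the insertion rigorous by Steklov averaging in $t$ and then pass to the limit. The passage is legitimate under the standing hypothesis \eqref{integrability}, which gives $\partial_t u\in L^2_{\text{loc}}(\Omega_T)$. The delicate singular contribution $v\chi_{[u\le 0]}$ is, by design, already isolated in \eqref{frequently use weak form two phase stefan}, so it enters the final inequality packaged as $U([\bar x+K],t_0,t_1,\pm(u-k)_\pm\zeta^p)$ with no further manipulation.

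After substitution, three blocks must be analysed. First, the parabolic term $\iint\varphi\,\partial_t u$ rewrites as $\iint\zeta^p\,\partial_t\!\bigl(\tfrac12(u-k)_\pm^2\bigr)$; integrating by parts in $t$ produces the boundary contribution $\tfrac12\int(u-k)_\pm^2\zeta^p\,dx\bigr|_{t_0}^{t_1}$ together with an interior piece $-\iint\tfrac12(u-k)_\pm^2\,\partial_t(\zeta^p)\,dxdt$. Moving the boundary term at $t_1$ to the left and taking $\esssup$ over $t_1\in(t_0,T)$ produces the $L^\infty_t L^2_x$ quantity on the left of \eqref{Caccioppoli}, while the interior piece delivers the $|\partial_t\zeta|$ integral on the right.

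Second, the diffusion term is expanded via $D\varphi=\pm\zeta^p D(u-k)_\pm\pm p(u-k)_\pm\zeta^{p-1}D\zeta$. Coupling the first summand with the coercivity assumption in \eqref{A} (and using $D(u-k)_\pm=\pm\chi_{[\pm(u-k)>0]}Du$) yields $C_0\iint|D(u-k)_\pm|^p\zeta^p\,dxdt$, which after the Leibniz estimate $|D((u-k)_\pm\zeta)|^p\leq 2^{p-1}\bigl(|D(u-k)_\pm|^p\zeta^p+(u-k)_\pm^p|D\zeta|^p\bigr)$ dominates the gradient integral on the left of \eqref{Caccioppoli} up to a harmless $(u-k)_\pm^p|D\zeta|^p$ correction. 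The second summand is bounded using the growth condition in \eqref{A} by $pC_1\iint|D(u-k)_\pm|^{p-1}\zeta^{p-1}(u-k)_\pm|D\zeta|\,dxdt$; Young's inequality with conjugate exponents $p/(p-1)$ and $p$ absorbs a small multiple of the gradient integral into the left-hand side and contributes a further $(u-k)_\pm^p|D\zeta|^p$ term on the right.

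The only genuinely novel feature over the classical DiBenedetto argument is the singular graph $\beta$; the main point is that the equivalent weak identity \eqref{frequently use weak form two phase stefan} has already segregated this contribution into the $v\chi_{[u\le 0]}$ block, so that it appears in the bound as the single term $U(\cdots)$ with no additional estimation. The \emph{potential obstacle}---the absence of time-differentiability of $v\chi_{[u\le 0]}$, which prevents a direct Steklov calculation on the full left-hand side of the original weak form---is precisely what is circumvented by working with \eqref{frequently use weak form two phase stefan} rather than \eqref{weak form two phase stefan}. Combining the three blocks, choosing the Young absorption constant small enough, and taking the essential supremum in $t_1$ on the left yields \eqref{Caccioppoli} with a constant $\gamma_1$ depending only on $\{\bar\nu,N,C_0,C_1\}$.
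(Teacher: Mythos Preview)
Your proposal is correct and follows exactly the approach indicated in the paper: substitute $\varphi=\pm(u-k)_\pm\zeta^p$ into \eqref{frequently use weak form two phase stefan} and proceed as in \cite[Chapter II, Proposition 3.1]{Di93}, with the singular-graph contribution left packaged as the $U$-term. The paper omits the details entirely and simply refers to DiBenedetto, so your write-up is in fact more complete than what appears there.
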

We now turn to consider the local logarithmic estimates for weak solutions. To this end,
we introduce the logarithmic function
\begin{equation*}\psi^{\pm}(u)=\max\left\{\ln\frac{H_k^{\pm}}{H_k^{\pm}-(u-k)_{\pm}+c};0\right\},\quad 0<c<H_k^{\pm}\end{equation*}
where $H_k^{\pm}$ is a constant chosen such that
\begin{equation*}H_k^{\pm}\geq \esssup_{[\bar x+K]\times [t_0,t_1]}(u-k)_{\pm}.\end{equation*}
For simplicity of notation, we write $\psi^{\pm}$ instead of $\psi^{\pm}(u)$. We let $(\psi^{\pm})'$ stand for the derivative of $\psi^{\pm}(u)$
with respect to $u$. If we plug $\varphi=2\psi^{\pm}\left(\psi^{\pm}\right)^\prime\zeta^p$ into \eqref{frequently use weak form two phase stefan}, we obtain the following proposition.
\begin{proposition}
Let $u$ be a weak solution of \eqref{two phase stefan}-\eqref{A} in $\Omega_T$. There exists a constant $\gamma_2$ that can be determined
a priori only in terms of the data such that
\begin{equation}\begin{split}\label{logarithmic}\esssup_{t_0<t<t_1}&\int_{[\bar x+K]\times\{t\}}\left(\psi^{\pm}\right)^2\zeta^p dx\leq
\int_{[\bar x+K]\times\{t_0\}}\left(\psi^{\pm}\right)^2\zeta^p dx\\&+
\gamma_2\int_{t_0}^{t_1}\int_{[\bar x+K]}\psi^{\pm}|\left(\psi^{\pm}\right)^\prime|^{2-p}
|D\zeta|^pdxdt+
U([\bar x+K],t_0,t_1,2\psi^{\pm}\left(\psi^{\pm}\right)^\prime\zeta^p),\end{split}\end{equation}
where $\zeta=\zeta(x)$ is independent of $t\in[t_0,t_1]$ and satisfies \eqref{def zeta}.
\end{proposition}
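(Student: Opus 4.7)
The plan is to insert the test function $\varphi = 2\psi^{\pm}(\psi^{\pm})'\zeta^p$ into the rewritten weak formulation \eqref{frequently use weak form two phase stefan}. Boundedness of $\psi^{\pm}$ and $(\psi^{\pm})'$ on the support of $\varphi$, together with the cut-off $\zeta$, ensure that $\varphi\in W_p(\Omega_T)$ after a standard Steklov regularization, which I omit.

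For the time contribution I would use that $\zeta=\zeta(x)$ is $t$-independent together with the chain-rule identity
\begin{equation*}
\frac{d}{du}\bigl[(\psi^{\pm})^2\bigr] = 2\psi^{\pm}(\psi^{\pm})',
\end{equation*}
so that $\varphi\,\partial_t u = \zeta^p\,\partial_t[(\psi^{\pm})^2]$, which integrates in time to $\int(\psi^{\pm})^2\zeta^p\,dx\big|_{t_0}^{t_1}$. For the elliptic contribution I exploit the defining identity $(\psi^{\pm})'' = ((\psi^{\pm})')^2$ of the logarithm to compute
\begin{equation*}
D\varphi = 2(1+\psi^{\pm})((\psi^{\pm})')^2 \zeta^p\, Du + 2p\,\psi^{\pm}(\psi^{\pm})' \zeta^{p-1} D\zeta.
\end{equation*}
The coercivity bound $A\cdot Du\ge C_0|Du|^p$ then produces the nonnegative reservoir $2C_0(1+\psi^{\pm})((\psi^{\pm})')^2\zeta^p|Du|^p$ on the left, while the singular-graph pieces collect exactly into $U([\bar x+K],t_0,t_1,\varphi)$ on the right.

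The main technical step is to control the mixed term $2p\psi^{\pm}(\psi^{\pm})'\zeta^{p-1}(A\cdot D\zeta)$ in a way that produces the exponent $2-p$. Using $|A|\le C_1|Du|^{p-1}$ reduces the task to bounding $\psi^{\pm}|(\psi^{\pm})'|\zeta^{p-1}|Du|^{p-1}|D\zeta|$. I would split
\begin{equation*}
\psi^{\pm}|(\psi^{\pm})'|
= \bigl[(\psi^{\pm})^{1/p}|(\psi^{\pm})'|^{(2-p)/p}\bigr]\cdot\bigl[(\psi^{\pm})^{(p-1)/p}|(\psi^{\pm})'|^{2(p-1)/p}\bigr]
\end{equation*}
and apply Young's inequality with conjugate exponents $p$ and $p/(p-1)$, pairing the first bracket with $|D\zeta|$ and the second with $\zeta^{p-1}|Du|^{p-1}$. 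The outcome is
\begin{equation*}
\psi^{\pm}|(\psi^{\pm})'|\zeta^{p-1}|Du|^{p-1}|D\zeta|\le \epsilon\, \psi^{\pm}((\psi^{\pm})')^2 \zeta^p |Du|^p + C(\epsilon)\, \psi^{\pm}|(\psi^{\pm})'|^{2-p} |D\zeta|^p,
\end{equation*}
and the first summand is absorbed into the reservoir for $\epsilon$ sufficiently small.

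After this absorption, replacing the upper time $t_1$ by an arbitrary $t\in(t_0,T]$ and taking essential supremum recovers \eqref{logarithmic}. I expect the genuinely delicate point to be locating the splitting above: the exponent $2-p$ is forced precisely by the algebraic constraint that the absorbable factor match the coercivity term $((\psi^{\pm})')^2\zeta^p|Du|^p$, and choosing any different grouping of the powers of $(\psi^{\pm})'$ either leaves a term that cannot be absorbed or destroys the $|D\zeta|^p$ structure. Once the split is found, the remainder parallels the classical logarithmic estimate in \cite[Chap.~II]{Di93}, and the phase-transition remainder $U$ from \eqref{frequently use weak form two phase stefan} propagates unchanged through the calculation.
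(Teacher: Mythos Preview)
Your proposal is correct and follows exactly the classical argument of \cite[Chapter~II, Proposition~3.2]{Di93}, which is precisely what the paper invokes in lieu of a proof; the test function $\varphi=2\psi^{\pm}(\psi^{\pm})'\zeta^p$, the identity $(\psi^{\pm})''=((\psi^{\pm})')^2$, and your Young-inequality splitting producing the factor $|(\psi^{\pm})'|^{2-p}$ are all the standard ingredients. The only addition relative to the smooth case is that the singular-graph terms are carried along untouched as $U([\bar x+K],t_0,t_1,\varphi)$, which you have noted.
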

This is a standard result which can be found in \cite[Chapter II, Proposition 3.2]{Di93} and no
proof will be given here.
Finally, we remark that our approach does not follow the idea in \cite{HU} and \cite{U2000} which concerns the approximate solutions to the
equations with regularization of maximal monotone graph. Instead, we follow the approach in \cite{Di82}, which is more convenient to deal with the second alternative.
\section{The intrinsic geometry}\label{The intrinsic geometry}
The continuity of $u$ will be a consequence of the following assertion. For any point $(x_0,t_0)\in\Omega_T$ there exists a family of nested and shrinking cylinders $[(x_0,t_0)+Q_n]$ such that the oscillation of $u$ in $[(x_0,t_0)+Q_n]$ tends to zero
as $n\to\infty$. To begin the proof, we introduce a certain intrinsic parabolic cylinder which plays an important role in alternative arguments.

Without loss of generality, we assume that
$(x_0,t_0)=(0,0)$. Let $R<1$ be a fixed number such that $Q(R,R^{\frac{1}{p}})\subset\Omega_T$ and
we set $\overline Q=Q(R,R^{\frac{1}{p}})$ as a reference parabolic cylinder. We write
\begin{equation*}\mu_-=\essinf_{\overline Q}u,\quad \mu_+=\esssup_{\overline Q}u\quad\text{and}\quad \essosc_{\overline Q}u=\mu_+-\mu_-.\end{equation*}
Here and subsequently, $\omega$ stands for a fixed number and satisfies
$\omega>\essosc_{\overline Q}u$. For $(\bar x,\bar t)\in \overline Q$, we introduce the intrinsic parabolic cylinders of the form
$(\bar x,\bar t)+Q(dR^p,d_*R)$
where
\begin{equation}\label{d}d=\left(\frac{\omega}{2^{m_1}}\right)^{(1-p)(2-p)},\quad d_*=\left(\frac{\omega}{2^{m_2}}\right)^{p-2}
\quad\text{and}\quad m_1=\frac{p}{p-1}m_2.\end{equation}
The quantities $m_1$ and $m_2$ depend upon $\omega$ and will be determined in \S \ref{m2}.
Moreover, the relation $m_1=pm_2/(m-1)$ implies
\begin{equation}\label{dd*}\frac{d_*^p}{d}=\frac{\left(\frac{\omega}{2^{m_2}}\right)^{p(p-2)}}
{\left(\frac{\omega}{2^{m_1}}\right)^{(1-p)(2-p)}}=\omega^{p-2}.\end{equation}
Let $L_1>1$ and $L_2>1$ denote the fixed constants which will be determined in \S 5. We first assume that
$Q(c_1R^p,c_2R)\subset \overline Q$ where $c_1=L_1d$ and $c_2=L_2d_*$. In the case $Q(c_1R^p,c_2R)\nsubseteq \overline Q$, we
shall derive a decay estimate for $\omega$ in terms of $R$ and
the discussion is
postponed until \S 6.2.1. Define $K_\omega=K_{(L_2-1)d_*R}$
and $I_\omega=[-(L_1-1)dR^p,0]$. It is easy to check that $K_{\frac{1}{2}c_2R}\subset K_\omega$.

The main ingredient of the proof is to establish an estimate for the essential oscillation of $u$ in a smaller cylinder $Q\left(d\left(\frac{R}{2}\right)^p,d_*\left(\frac{R}{2}\right)\right)$,
\begin{equation}\label{osc}\essosc_{Q\left(d\left(\frac{R}{2}\right)^p,d_*\left(\frac{R}{2}\right)\right)}u\leq \sigma(\omega)\omega\end{equation}
where $\sigma(\omega)<1$.
We now assume that $\mu_+-\mu_-\geq \omega/2$, otherwise \eqref{osc} follows immediately.
Moreover, for technical reasons, we have to consider the
two cases
$\mu_+\geq |\mu_-|$ and $\mu_+< |\mu_-|$ separately.

Motivated by the work of Henriques and Urbano \cite{HU}, we consider the four complementary cases described below.
In the case $\mu_+\geq |\mu_-|$. For a constant $\nu_0\in(0,1)$, that will be determined in \S 5, we have two possible alternatives.

 \textbf{The first alternative}. There exists $\bar t\in I(\omega)$ such that for all $\bar x\in K_\omega$,
\begin{equation}\label{1st}\big|\{(x,t)\in (\bar x,\bar t)+Q(dR^p,d_*R):u<\mu_-+\frac{\omega}{4}\}\big|\leq \nu_0|Q(dR^p,d_*R)|.\end{equation}

 \textbf{The second alternative}. For any $\bar t\in I(\omega)$ there exists $\bar x\in K_\omega$ such that
\begin{equation}\label{2nd}\big|\{(x,t)\in (\bar x,\bar t)+Q(dR^p,d_*R):u>\mu_+-\frac{\omega}{4}\}\big|\leq (1-\nu_0)|Q(dR^p,d_*R)|.\end{equation}
In the case $\mu_+< |\mu_-|$. For a constant $\nu_0\in(0,1)$, that can be determined by $\omega$, we introduce the following two alternatives.

 \textbf{The third alternative}. There exists $\bar t\in I(\omega)$ such that for all $\bar x\in K_\omega$,
\begin{equation*}\big|\{(x,t)\in (\bar x,\bar t)+Q(dR^p,d_*R):u>\mu_+-\frac{\omega}{4}\}\big|\leq \nu_0|Q(dR^p,d_*R)|.\end{equation*}

 \textbf{The fourth alternative}. For any $\bar t\in I(\omega)$ there exists $\bar x\in K_\omega$ such that
\begin{equation*}\big|\{(x,t)\in (\bar x,\bar t)+Q(dR^p,d_*R):u<\mu_-+\frac{\omega}{4}\}\big|\leq (1-\nu_0)|Q(dR^p,d_*R)|.\end{equation*}
For simplicity, we concentrate in the next two sections only the case $\mu_+\geq|\mu_-|$, since
the treatment of the case $\mu_+< |\mu_-|$ is similar. The proof of \eqref{osc} in the case $\mu_+< |\mu_-|$ is left to the reader.
\section{The first alternative}
The aim in this section is to establish the estimate \eqref{osc} for the first alternative. To start with,
we establish the following DeGiorgi type lemma and determine the constant $\nu_0$ in terms of data and $\omega$.
\begin{lemma}\label{DeGiorgi}There exists a constant $\nu_0\in (0,1)$, depending only upon data and $\omega$, such that if \eqref{1st} holds
for some $(\bar x, \bar t)\in K_\omega\times I_\omega$ then
\begin{equation}\label{DeGiorgi1}u(x,t)\geq\mu_-+\frac{\omega}{8}\quad\quad\text{a.e.}\quad(x,t)\in (\bar x,\bar t)+Q\left(d\left(\frac{R}{2}\right)^p,d_*\frac{R}{2}\right).\end{equation}
\end{lemma}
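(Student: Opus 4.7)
The plan is to prove the lemma by a standard De Giorgi iteration on a decreasing sequence of truncation levels and shrinking intrinsic cylinders, with the main technical issue being the control of the singular term $U$ in the Caccioppoli estimate. I set up the levels $k_n=\mu_-+\omega/8+\omega/2^{n+3}$ (so $k_0=\mu_-+\omega/4$ and $k_n\downarrow\mu_-+\omega/8$) and shrinking cylinders $Q_n=[\bar x+K_{r_n}]\times(\bar t-\rho_n,\bar t]$ with $r_n$ decreasing from $d_*R$ to $d_*R/2$ and $\rho_n$ decreasing from $dR^p$ to $d(R/2)^p$. Piecewise smooth cutoffs $\zeta_n$ equal to $1$ on $Q_{n+1}$, supported in $Q_n$, and vanishing at $t=\bar t-\rho_n$, will satisfy $|D\zeta_n|\le C2^n/(d_*R)$ and $|\partial_t\zeta_n|\le C2^n/(dR^p)$. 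Set $A_n=\{(x,t)\in Q_n:u<k_n\}$ and $Y_n=|A_n|/|Q_n|$.

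Applying the Caccioppoli estimate \eqref{Caccioppoli} to $(u-k_n)_-$ with cutoff $\zeta_n$ and using $(u-k_n)_-\le\omega/4$, the two bulk terms are each bounded by $C2^{np}\omega^2/(dR^p)|A_n|$, thanks to the scaling relation $d_*^p/d=\omega^{p-2}$ from \eqref{dd*}. For the singular contribution $U$, I expand $\partial_t((u-k_n)_-\zeta_n^p)$ and use the chain-rule identity $v\chi_{[u\le 0]}\,\partial_t u=-\bar\nu\,\partial_t u_-$ (up to a set of measure zero on $\{u=0\}$) to integrate the $\partial_t u$ factor by parts in time. When $k_n\ge 0$, the new boundary term combines with the boundary term $-\int v\chi_{[u\le 0]}(u-k_n)_-\zeta_n^p|_{t^*}\,dx$ already present in $U$, and the pointwise computation $-v\chi_{[u\le 0]}(u-k_n)_-+\bar\nu u_-\le 0$ (which uses $u_-\le(u-k_n)_-$, valid for $k_n\ge 0$) shows that the net boundary contribution is non-positive. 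The remaining bulk pieces are controlled via $|v\chi_{[u\le 0]}|\le\bar\nu$ and the same pointwise bound, giving $U\le C\bar\nu\omega\cdot 2^n/(dR^p)|A_n|$, and therefore
\[
\esssup_t\int(u-k_n)_-^2\zeta_n^p\,dx+\iint|D((u-k_n)_-\zeta_n)|^p\,dx\,dt\le C\omega\cdot 2^{np}/(dR^p)|A_n|.
\]

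Next I apply the parabolic Sobolev inequality of Lemma \ref{Sobolev} to $(u-k_n)_-\zeta_n$ on $Q_n$ and exploit that on $A_{n+1}$ one has $\zeta_n\equiv 1$ and $(u-k_n)_-\ge\omega/2^{n+4}$. Rearranging produces the recursive inequality $Y_{n+1}\le Cb^nY_n^{1+p/N}$, with $b,C$ depending on the data and on $\omega$. The standard geometric convergence lemma then gives $Y_n\to 0$ provided $Y_0\le\nu_0$ for some $\nu_0$ determined by $C,b$, hence by data and $\omega$, and this forces $|A_\infty|=0$, that is, $u\ge\mu_-+\omega/8$ a.e.\ on the target cylinder. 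The case $k_n<0$ (which can occur when $\mu_-<-\omega/8$) is handled by the same scheme, using that $(u-k_n)_-$ is then supported on $[u<k_n]\subset[u<0]$, so a parallel chain-rule calculation yields the same bound on $U$.

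The main obstacle is the treatment of the singular term $U$. The naive bound $|v\chi_{[u\le 0]}|\le\bar\nu$ produces a boundary-in-time contribution proportional to the slice measure $|\{x:u(x,t^*)\le 0\}\cap K_{r_n}|$, which is not small and would prevent the iteration from closing. Extracting a favorable sign by the chain rule and the monotonicity of $\beta$, together with the appropriate case split according to the sign of $k_n$, is the crucial step; a misstep at this point is precisely the kind of gap identified by the author in the introduction.
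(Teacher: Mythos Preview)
Your proposal is correct and follows essentially the same De Giorgi iteration as the paper: the same levels $k_n$, the same nested intrinsic cylinders, the same use of the scaling identity $d_*^p/d=\omega^{p-2}$, the parabolic Sobolev inequality, and the fast geometric convergence lemma.

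The only noteworthy difference is in the handling of the singular term $U$. The paper dispatches it in one line by quoting the estimate from \cite[p.~145]{Di82},
\[
U\bigl(K_{d_*R_n},-dR_n^p,0,-(u-k_n)_-\zeta_n^p\bigr)\le \bar\nu\iint_{Q_n}(u-k_n)_-\,\partial_t\zeta_n^p\,dx\,dt,
\]
which immediately feeds into the right-hand side of the energy inequality. Your sketch instead rederives a comparable bound by expanding $\partial_t\bigl((u-k_n)_-\zeta_n^p\bigr)$, using $v\chi_{[u\le 0]}\partial_t u=-\bar\nu\,\partial_t u_-$ a.e., and splitting on the sign of $k_n$; this is exactly the content of the cited argument in \cite{Di82}, so the two routes coincide. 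One small correction: your closing remark that ``a misstep at this point is precisely the kind of gap identified by the author in the introduction'' is off target. The gap flagged there concerns the missing factor $c_0^{-p}$ from the chain rule in \cite[Lemma~1]{HU}, which is an issue in the \emph{second} alternative, not the treatment of $U$ in this first-alternative De Giorgi lemma. Here the $U$ term is genuinely benign once the cutoff vanishes on the parabolic boundary.
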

\begin{proof}Without loss of generality we may assume $(\bar x,\bar t)=(0,0)$. Define two decreasing sequences of numbers
$$R_n=\frac{R}{2}+\frac{R}{2^{n+1}},\quad k_n=\mu_-+\frac{\omega}{8}+\frac{\omega}{2^{n+3}},\quad n=0,1,2,\cdots$$
and construct the family of nested and shrinking cylinders $Q_n=Q(dR_n^p,d_*R_n)$. We construct smooth cutoff functions $0\leq \zeta_n\leq 1$, such that
$\zeta_n\equiv 1$ in $Q_{n+1}$, $\zeta_n\equiv 0$ on $\partial_PQ_{n}$, $|D\zeta_n|\leq 2^{n+2}/(d_*R)$ and $0<\partial_t\zeta_n\leq 2^{2+np}/(dR^p)$. Write the energy estimate \eqref{Caccioppoli} over the cylinder $Q_n$ for the functions $(u-k_n)_-$.
We first observe from \cite[page 145]{Di82} that
$$U(K_{d_*R_n},-dR_n^p,0,-(u-k_n)_{-}\zeta_n^p)
\leq \bar\nu\iint_{Q_n}(u-k_n)_-\partial_t \zeta_n^pdxdt
\leq 2\bar\nu\iint_{Q_n}(u-k_n)_-\partial_t \zeta_ndxdt.$$
Since $(u-k_n)_-\leq \frac{\omega}{4}<1$ and $\partial_t\zeta_n\geq 0$, we obtain
\begin{equation*}\begin{split}\esssup_{-dR_n^p<t<0}&\int_{K_{d_*R_n}\times\{t\}}(u-k_n)_{-}^2\zeta_n^p dx
+\iint_{Q_n}|D(u-k_n)_{-}\zeta_n|^p dxdt\\
\leq &\gamma_1\iint_{Q_n}(u-k_n)_{-}^p|D\zeta_n|^pdxdt+(1+2\bar\nu)\gamma_1
\iint_{Q_n}(u-k_n)_-\partial_t\zeta_ndxdt
\\
\leq &\bar \gamma\left(\left(\frac{\omega}{4}\right)^p\frac{2^{2n}}{(d_*R)^p}+\left(\frac{\omega}{4}\right)\frac{2^{2n}}{dR^p}\right)
\iint_{Q_n}\chi_{[(u-k_n)_->0]}dxdt,\end{split}\end{equation*}
where
$\bar \gamma=(1+2\bar\nu)\gamma_1$.
Moreover, the relation \eqref{dd*} implies
\begin{equation*}\left(\frac{\omega}{4}\right)^p\frac{2^{2n}}{(d_*R)^p}=
\frac{1}{4^p}\frac{\omega ^22^{2n}}{d R^p}\leq \left(\frac{\omega}{4}\right)\frac{2^{2n}}{dR^p},\end{equation*}
since $\omega<1$.
Then we conclude that
\begin{equation*}\begin{split}\esssup_{-dR_n^p<t<0}\int_{K_{d_*R_n}\times\{t\}}(u-k_n)_{-}^2\zeta_n^p dx
+\iint_{Q_n}|D(u-k_n)_{-}\zeta_n|^p dxdt
\leq &\bar \gamma\left(\frac{\omega}{4}\right)\frac{2^{2n}}{dR^p}
\iint_{Q_n}\chi_{[(u-k_n)_->0]}dxdt.\end{split}\end{equation*}
Applying parabolic Sobolev's inequality \eqref{Sobolevf}, we deduce
\begin{equation}\begin{split}\label{S1}\iint_{Q_n}&|(u-k_n)_-\zeta|^{p\frac{N+2}{N}}dxdt\\&\leq \gamma
\left(\esssup_{-dR_n^p<t<0}\int_{K_{d_*R_n}\times\{t\}}(u-k_n)_{-}^2\zeta_n^{2} dx\right)^{\frac{p}{N}}
\iint_{Q_n}|D(u-k_n)_{-}\zeta_n|^p dxdt\\
&\leq \gamma\bar \gamma\left(\frac{\omega}{4}\right)^{1+\frac{p}{N}}\left(\frac{2^{2n}}{dR^p}
\iint_{Q_n}\chi_{[(u-k_n)_->0]}dxdt\right)^{1+\frac{p}{N}}.\end{split}\end{equation}
At this point, we set
$$A_n=\iint_{Q_n}\chi_{[(u-k_n)_->0]}dxdt\quad\text{and}\quad Y_n=\frac{A_n}{|Q_n|}.$$
The left-hand side of \eqref{S1} is estimated below by
\begin{equation}\begin{split}\label{S2}\iint_{Q_n}&|(u-k_n)_-\zeta|^{p\frac{N+2}{N}}dxdt
\geq \iint_{Q_{n+1}}|(u-k_n)_-|^{p\frac{N+2}{N}}\chi_{[(u-k_{n+1})_->0]}dxdt\\&\geq (k_n-k_{n+1})^{p\frac{N+2}{N}}A_{n+1}
\geq \left(\frac{\omega}{2^{n+4}}\right)^{p\frac{N+2}{N}}A_{n+1} .\end{split}\end{equation}
Combining \eqref{S2} with \eqref{S1} and noting that $|Q_n|=dd_*^NR_n^{N+p}$, we obtain
\begin{equation*}Y_{n+1}\leq \gamma\bar\gamma 2^{10+N+\frac{16}{N}}\frac{\omega^{1-p-\frac{p}{N}}2^{(4+\frac{8}{N})n}}{dd_*^{-p}}Y_n^{1+\frac{p}{N}}
=\gamma\bar\gamma 2^{10+N+\frac{16}{N}}\omega^{-1-\frac{p}{N}}2^{(4+\frac{8}{N})n}Y_n^{1+\frac{p}{N}},\end{equation*}
where we used \eqref{dd*} in the last step. Next, we set
\begin{equation}\label{nu0} \nu_0=\left(\gamma\bar\gamma 2^{10+N+\frac{16}{N}}\right)^{-N}2^{-N^2(4+\frac{8}{N})}\omega^{N+1}.\end{equation}
We observe that if $Y_0\leq \nu_0$, then
\begin{equation*}Y_0\leq \left(\gamma\bar\gamma 2^{10+N+\frac{16}{N}}\omega^{-1-\frac{p}{N}}\omega^{-1-\frac{p}{N}}\right)^{-\frac{N}{p}}2^{-\frac{N^2}{p^2}(4+\frac{8}{N})}.
\end{equation*}
Using a lemma on fast geometric convergence of sequences (cf. \cite[Chapter I, Lemma 4.1]{Di93}),
we conclude that $Y_n\to0$ as $n\to\infty$. This completes the proof of Lemma \ref{DeGiorgi}.
\end{proof}

From the definition of the first alternative, we can extend the estimate \eqref{DeGiorgi1} to a larger domain by using a covering argument. To be more precise, we obtain the following corollary whose proof we omit.
\begin{corollary}\label{cor5.2}Let $\nu_0$ be a constant chosen
according to \eqref{nu0} and assume that \eqref{1st} holds for some $\bar t\in I_\omega$ and for all $\bar x\in K_{\omega}$. Then there holds
\begin{equation}\label{1st alt DeGiorgi}u(x,t)\geq\mu_-+\frac{\omega}{8}\quad\quad\text{a.e.}\quad(x,t)\in K_{c_2\frac{R}{2}}\times\left(
\bar t-d\left(\frac{R}{2}\right)^p,\bar t\right].\end{equation}
\end{corollary}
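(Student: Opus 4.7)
The plan is to deduce the corollary from Lemma \ref{DeGiorgi} by a straightforward covering argument in the spatial variable, keeping the time slice $\bar t$ fixed.

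By assumption, the measure-theoretic information \eqref{1st} holds for the same $\bar t\in I_\omega$ simultaneously for every $\bar x\in K_\omega$. For each such $\bar x$ the hypothesis of Lemma \ref{DeGiorgi} is satisfied with the constant $\nu_0$ given by \eqref{nu0}, so applying the lemma pointwise in $\bar x$ yields
\begin{equation*}
u(x,t)\ge \mu_-+\frac{\omega}{8}
\qquad\text{a.e. in}\quad
[\bar x+K_{d_*R/2}]\times\bigl(\bar t-d(R/2)^p,\bar t\bigr],
\end{equation*}
for every $\bar x\in K_\omega=K_{(L_2-1)d_*R}$.

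Taking the union over $\bar x\in K_\omega$, it suffices to check the spatial inclusion
\begin{equation*}
K_{c_2R/2}=K_{L_2d_*R/2}\;\subset\;\bigcup_{\bar x\in K_{(L_2-1)d_*R}}\bigl[\bar x+K_{d_*R/2}\bigr].
\end{equation*}
Given $x\in K_{L_2d_*R/2}$, define $\bar x$ componentwise by $\bar x_i=x_i$ when $|x_i|\le(L_2-1)d_*R$ and $\bar x_i=\operatorname{sign}(x_i)(L_2-1)d_*R$ otherwise. Then $\bar x\in K_{(L_2-1)d_*R}$ and, since $L_2\ge1$, one has $|x_i-\bar x_i|\le d_*R(1-L_2/2)\le d_*R/2$, so $x\in[\bar x+K_{d_*R/2}]$. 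The time factor is identical for every cylinder, hence the union in space-time equals the product cylinder stated in \eqref{1st alt DeGiorgi}.

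The argument is entirely elementary; no step presents a genuine obstacle once Lemma \ref{DeGiorgi} is in place, the only mild care being the verification of the spatial covering, for which the constant $L_2$ (to be fixed in \S5) needs only to satisfy $L_2\ge1$.
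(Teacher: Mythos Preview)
Your proof is correct and follows precisely the covering argument that the paper invokes (and omits). One cosmetic remark: the bound $|x_i-\bar x_i|\le d_*R(1-L_2/2)$ in the ``otherwise'' branch is only meaningful when $L_2\le 2$; for $L_2>2$ that branch is vacuous since then $L_2d_*R/2<(L_2-1)d_*R$ and $\bar x=x$ already works---but this does not affect the validity of your argument.
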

 Next, we set $t_*=\bar t-d(R/2)^p$ and establish the following result regarding the expansion of the positivity in time direction.
\begin{lemma}\label{1st expand time}Let $\nu_0$ be a constant chosen
according to \eqref{nu0} and assume that \eqref{1st} holds for some $\bar t\in I_\omega$ and for all $\bar x\in K_{\omega}$. For any fixed $\nu_1\in(0,1)$, the constant $L_2$ can be chosen in dependence on $N$, $C_0$, $C_1$, $\nu_1$ and $L_1$ such that, with $c_2=L_2d_*$, there holds
\begin{equation}\label{1st expand time formula}\big|\{x\in K_{c_2\frac{R}{4}}:u<\mu_-+\frac{\omega}{2^6}\}\big|<\nu_1|K_{c_2\frac{R}{4}}|\end{equation}
for all $t\in (t_*,0)$.
\end{lemma}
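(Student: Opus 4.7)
The plan is to apply the logarithmic energy estimate \eqref{logarithmic} with the pointwise lower bound from Corollary \ref{cor5.2} providing the initial datum, and to exploit the algebraic identity \eqref{dd*} to cancel all powers of $\omega$. I first split $(t_*,0)$ at $\bar t$. For $t\in(t_*,\bar t]$ the bound $u\geq\mu_-+\omega/8>\mu_-+\omega/2^6$ from Corollary \ref{cor5.2} holds a.e.\ on $K_{c_2R/2}\supset K_{c_2R/4}$, so the set in \eqref{1st expand time formula} is empty. The content of the lemma is the propagation of positivity to $t\in(\bar t,0)$.

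On this latter interval I would apply \eqref{logarithmic} with level $k=\mu_-+\omega/8$, parameters $H_k^-=\omega/8$ and $c=\omega/2^s$ for a suitable fixed $s>6$, on the cylinder $K_{c_2R/2}\times(\bar t,t)$, with a time-independent cutoff $\zeta=\zeta(x)$ built as in \eqref{zeta}--\eqref{zetai} such that $\zeta\equiv1$ on $K_{c_2R/4}$ and $|D\zeta|\leq C/(c_2R)$. Corollary \ref{cor5.2} kills the initial-time integral because $\psi^-(\cdot,\bar t)\equiv0$ on $K_{c_2R/2}$. Using the uniform bounds $\psi^-\leq\ln2^{s-3}$ and $|(\psi^-)'|\leq 2^s/\omega$, the logarithmic double integral is controlled by
\[
\gamma_2(\ln 2^{s-3})(2^s/\omega)^{2-p}\frac{C}{(L_2d_*R)^p}\cdot(L_2d_*R)^N\cdot L_1dR^p .
\]
The crucial cancellation is $d/d_*^p=\omega^{2-p}$ from \eqref{dd*}, which eliminates all powers of $\omega$ (up to the factor $2^{s(2-p)}$, which stays close to $1$ since $p\in[2-\epsilon_0,2)$). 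This yields an $L^2$-bound on $\psi^-\zeta^{p/2}$ of the form $CL_1L_2^{N-p}R^p(d_*R)^N$, up to the contribution of the $U$-term discussed below.

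To convert this to the measure estimate \eqref{1st expand time formula}, note that on $\{u<\mu_-+\omega/2^6\}$ one has $(u-k)_->7\omega/64$, so with $c=\omega/2^s$ one gets $\psi^-\geq c_*$ for a positive constant $c_*$ depending only on $s$. Hence
\[
\big|\{x\in K_{c_2R/4}:u(x,t)<\mu_-+\omega/2^6\}\big|\leq c_*^{-2}\int_{K_{c_2R/2}}(\psi^-)^2\zeta^p\,dx .
\]
Dividing by $|K_{c_2R/4}|=(c_2R)^N/2^N$ and invoking the bound above gives a quotient of the form $CL_1L_2^{-p}R^p$, which is made smaller than $\nu_1$ by choosing $L_2$ large in the claimed dependencies.

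The main obstacle is to control the Stefan-type term $U(K_{c_2R/2},\bar t,t,2\psi^-(\psi^-)'\zeta^p)$ from the singularity of $\beta$ at $u=0$. I would use the boundedness $|v\chi_{[u\leq 0]}|\leq\bar\nu$, the uniform control $|\varphi|\leq \gamma\psi^-\cdot 2^s/\omega$, the vanishing of $\varphi$ at the initial time $\bar t$ by Corollary \ref{cor5.2}, and the $L^2$-integrability \eqref{integrability} of $\partial_tu$ (needed to handle the volume term after expressing $\partial_t\varphi$ via the chain rule). One then verifies that the resulting contributions scale the same way under \eqref{dd*} as the logarithmic term and so are absorbed into the same $CL_1L_2^{-p}$ estimate without introducing new $\omega$-dependence. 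The ability to choose $L_2$ independently of $\omega$ rests entirely on this $\omega$-cancellation built into the intrinsic geometry.
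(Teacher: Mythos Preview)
Your overall architecture---apply the logarithmic estimate \eqref{logarithmic} with $k=\mu_-+\omega/8$, kill the initial term via Corollary~\ref{cor5.2}, and use $d/d_*^p=\omega^{2-p}$ to cancel the $\omega$-powers coming from $|(\psi^-)'|^{2-p}|D\zeta|^p$---matches the paper, and your final quotient is indeed of order $L_1L_2^{-p}$ (the extra $R^p$ you wrote is a slip; it should cancel). The splitting of $(t_*,0)$ at $\bar t$ is harmless but unnecessary: since $u\geq k$ on $K_{c_2R/2}\times(t_*,\bar t]$, one has $\psi^-\equiv0$ there and one may as well start at $t_*$.

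The genuine gap is your treatment of the Stefan term $U$. Your plan is to bound $|U|$ crudely using $|v\chi_{[u\leq0]}|\leq\bar\nu$, $|\varphi|\lesssim\psi^-\cdot 2^s/\omega$, and $\partial_tu\in L^2_{\loc}$. But this does \emph{not} scale like the logarithmic term. The boundary contribution at $t=0$ is bounded only by $C\bar\nu\omega^{-1}|K_{c_2R/2}|$, which after division by $|K_{c_2R/4}|$ leaves a factor $C\omega^{-1}$ that no choice of $L_2$ can absorb. The volume term is worse: $\partial_t\varphi=2(1+\psi^-)\big((\psi^-)'\big)^2\zeta^p\,\partial_tu$ carries $((\psi^-)')^2\lesssim\omega^{-2}$, so after Cauchy--Schwarz you pick up $\omega^{-2}\|\partial_tu\|_{L^2}$, a quantity not controlled by the data at all. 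Your assertion that ``the resulting contributions scale the same way under \eqref{dd*}'' is therefore false.

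What the paper does instead is show $U(K_{c_2R/2},t_*,0,2\psi^-(\psi^-)'\zeta^p)\leq 0$ exactly, by a sign analysis. When $k\leq0$ or $\mu_-\geq c$ the term vanishes identically. In the remaining case $k>0$, $\mu_-<c$ one splits $U=S_1+S_2$, and for the volume piece $S_2$ introduces the Lipschitz auxiliary function
\[
\Phi_c(u)=\big[-\psi^-(\psi^-)'(u)+\psi^-(\psi^-)'(0)\big]\chi_{[u<0]},
\]
which satisfies $0\leq\Phi_c\leq-\psi^-(\psi^-)'$ and $\partial_t\Phi_c=-\partial_t[\psi^-(\psi^-)']\chi_{[u<0]}$. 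Integration by parts then yields $S_1+S_2\leq (2\bar\nu/c)\int\psi^-(\cdot,t_*)\zeta^p\,dx=0$, using the vanishing initial datum. This sign argument is the essential idea you are missing; crude absolute-value bounds cannot replace it.
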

\begin{proof}
We first recall that
\begin{equation}\begin{split}\label{SS0}\psi^-&=\ln^+\left(\frac{H_k^-}{H_k^--(u-k)_-+c}\right)
=\begin{cases}
	\ln\left(\dfrac{H_k^-}{H_k^-+u-k+c}\right),&\quad k-H_k^-\leq u<k-c,\\
	0,&\quad u\geq k-c,
	\end{cases}\end{split}\end{equation}
and
\begin{equation}\begin{split}\label{SS00}\left(\psi^-\right)^\prime
=\begin{cases}
-\dfrac{1}{H_k^-+u-k+c},&\quad k-H_k^-\leq u<k-c,\\
	0,&\quad u\geq k-c,
\end{cases}\end{split}\end{equation}
where
\begin{equation*} H_k^-\geq \esssup_{K_{c_2\frac{R}{2}}\times[t_*,0]}(u-k)_-\qquad\text{and}\qquad 0<c<H_k^-.\end{equation*}
Next we let $k=\mu_-+\frac{\omega}{8}$ and choose $H_k^-=\frac{\omega}{8}$ which is admissible since $(u-k)_-\leq \frac{\omega}{8}$.
We observe from \eqref{1st alt DeGiorgi} that $\psi^-(x,t_*)= 0$ for all $x\in K_{c_2\frac{R}{2}}$.
The following proof will be divided into two steps.

Step 1: \emph{Let $\zeta\equiv\zeta(x)$ be a piecewise smooth function. We establish the estimate}: \begin{equation}\label{claim}U(K_{c_2\frac{R}{2}},t_*,0,2\psi^-\left(\psi^-\right)^\prime\zeta^p)\leq 0.\end{equation}
In the case $k\leq0$, we have $u<k-c<0$ and this yields
\begin{equation*}\begin{split}
U&(K_{c_2\frac{R}{2}},t_*,0,2\psi^-\left(\psi^-\right)^\prime\zeta^p)\\&=2\bar\nu\int_{K_{c_2\frac{R}{2}}} \psi^-\left(\psi^-\right)^\prime\zeta^p dx\bigg|_{t=t_*}^{0}-2\bar\nu\int_{t_*}^{0}\int_{K_{c_2\frac{R}{2}}} \frac{\partial \left[\psi^-\left(\psi^-\right)^\prime\zeta^p\right]}{\partial t} dxdt
=0,\end{split}
\end{equation*}
which is our claim.
We now turn to the case $k>0$. If $\mu_-\geq c$, then there holds $u\geq \mu_-\geq c>0$ and we obtain again the identity $U(K_{c_2\frac{R}{2}},t_*,0,2\psi^-\left(\psi^-\right)^\prime\zeta^p)=0$. It now remains to consider the case $\mu_-<c$.
Since $\zeta\equiv \zeta(x)$, then $\partial_t\zeta\equiv 0$. We write
\begin{equation*}\begin{split}
U&(K_{c_2\frac{R}{2}},t_*,0,2\psi^-\left(\psi^-\right)^\prime\zeta^p)\\&=2\int_{K_{c_2\frac{R}{2}}} v(\cdot,t)\chi_{[u\leq 0]}\psi^-\left(\psi^-\right)^\prime\zeta^p dx\bigg|_{t=t_*}^{0}-2\int_{t_*}^{0}\int_{K_{c_2\frac{R}{2}}} v\chi_{[u\leq 0]}\frac{\partial \left[\psi^-\left(\psi^-\right)^\prime\right]}{\partial t}\zeta^p dxdt
\\&=:S_1+S_2,\end{split}
\end{equation*}
with the obvious meaning of $S_1$ and $S_2$. We begin with the estimate for $S_1$.
Noting that $-\left(\psi^-\right)^\prime=(u-\mu_-+c)^{-1}\leq c^{-1}$, we obtain
\begin{equation}\begin{split}\label{SS1}S_1&=2\int_{K_{c_2\frac{R}{2}}} v(\cdot,t)\chi_{[u\leq 0]}\psi^-\left(\psi^-\right)^\prime\zeta^p dx\bigg|_{t=0}+2\int_{K_{c_2\frac{R}{2}}} v(\cdot,t)\chi_{[u\leq 0]}\psi^-(-\left(\psi^-\right)^\prime)\zeta^p dx\bigg|_{t=t_*}
\\&\leq 2\bar\nu\int_{K_{c_2\frac{R}{2}}\cap [u<0]} \psi^-\left(\psi^-\right)'\zeta^p dx\bigg|_{t=0}+
\frac{2\bar\nu}{c}\int_{K_{c_2\frac{R}{2}}} \psi^-\zeta^p dx\bigg|_{t=t_*}.\end{split}
\end{equation}
To estimate $S_2$, we note that $\partial u/\partial t \equiv 0$ on $[u=0]$ and there holds
\begin{equation*}\begin{split}S_2&=2\bar\nu\int_{t_*}^{0}\int_{K_{c_2\frac{R}{2}}} \left[\psi^-\left(\psi^-\right)'\right]'\frac{\partial u_-}{\partial t}\zeta^p dxdt\\&=-2\bar\nu\int_{t_*}^{0}\int_{K_{c_2\frac{R}{2}}} \chi_{[u< 0]}\frac{\partial \left[\psi^-\left(\psi^-\right)^\prime\right]}{\partial t}\zeta^p dxdt\end{split}
\end{equation*}
At this stage, we introduce an auxiliary function $\Phi_c(u)$, defined by
\begin{equation*}\begin{split}\Phi_c(u)&=\begin{cases}
	-\psi^-(u)\left(\psi^-\right)'(u)+\psi^-(0)\left(\psi^-\right)'(0),&\quad \mu_-\leq u<0,\\
	0,&\quad 0\leq u<\mu_-+\frac{\omega}{8}-c,
	\end{cases}
\\&=\begin{cases}
	\ln\left(\dfrac{\omega}{8(u-\mu_-+c)}\right)\dfrac{1}{u-\mu_-+c}-\ln\left(\dfrac{\omega}{8(c-\mu_-)}\right)\dfrac{1}{c-\mu_-},&\quad \mu_-\leq u<0,\\
	0,&\quad 0\leq u<\mu_-+\dfrac{\omega}{8}-c.
	\end{cases}
\end{split}\end{equation*}
We observe that $0\leq \Phi_c(u)\leq -\psi^-\left(\psi^-\right)'$, $\Phi_c(u)$ is Lipschitz with respect to $u$
and
$$\frac{\partial \Phi_c(u)}{\partial t}=-\frac{\partial \left[\psi^-\left(\psi^-\right)^\prime\right]}{\partial t}\chi_{[u<0]}.$$
Integrating by parts, we deduce
\begin{equation}\begin{split}\label{SS2}S_2&=2\bar\nu\int_{t_*}^{0}\int_{K_{c_2\frac{R}{2}}} \frac{\partial \Phi_c}{\partial t}\zeta^p dxdt\\
&=2\bar\nu\int_{K_{c_2\frac{R}{2}}} \Phi_c\zeta^p dx\bigg|_{t=0}-
2\bar\nu\int_{K_{c_2\frac{R}{2}}} \Phi_c\zeta^p dx\bigg|_{t=t_*}\\&\leq 2\bar\nu\int_{K_{c_2\frac{R}{2}}} \Phi_c\zeta^p dx\bigg|_{t=0}\\&\leq -2\bar\nu\int_{K_{c_2\frac{R}{2}}\cap [u<0]} \psi^-\left(\psi^-\right)'\zeta^p dx\bigg|_{t=0}.\end{split}
\end{equation}
Combining \eqref{SS1} and \eqref{SS2}, we conclude that
\begin{equation*}\begin{split}U(K_{c_2\frac{R}{2}},t_*,0,2\psi^-\left(\psi^-\right)^\prime\zeta^p)=S_1+S_2\leq
\frac{2\bar\nu}{c}\int_{K_{c_2\frac{R}{2}}} \psi^-\zeta^p dx\bigg|_{t=t_*}=0,\end{split}
\end{equation*}
since $\psi^-(x,t_*)= 0$ for all $x\in K_{c_2\frac{R}{2}}$.
This proves the claim \eqref{claim}.

Step 2: \emph{Proof of \eqref{1st expand time}}.
Since $\psi^-(x,t_*)= 0$ for all $x\in K_{c_2\frac{R}{2}}$, we find that
$$\int_{K_{c_2\frac{R}{2}}\times\{-t_*\}}\left(\psi^-\right)^2\zeta^p dx= 0.$$
Plugging this into \eqref{logarithmic} and taking into account \eqref{claim}, we obtain
\begin{equation*}\begin{split}\esssup_{t_*<t<0}\int_{K_{c_2\frac{R}{2}}\times\{t\}}\left(\psi^-\right)^2\zeta^p dx\leq
\gamma_2\iint_{Q(|t_*|,c_2\frac{R}{2})}\psi^-|\left(\psi^-\right)^\prime|^{2-p}
|D\zeta|^pdxdt.\end{split}\end{equation*}
We take cutoff function $\zeta=\zeta(x)$ which satisfies $0\leq\zeta\leq1$ in $K_{c_2\frac{R}{2}}$, $\zeta\equiv 1$
in $K_{c_2\frac{R}{4}}$ and $|D\zeta|\leq 4(c_2R)^{-1}$. Now we take $c=2^{-6}\omega$ and deduce from \eqref{SS0} and \eqref{SS00} the estimate
 \begin{equation*}\begin{split}\psi^-|\left(\psi^-\right)^\prime|^{2-p}
|D\zeta|^p&\leq \left(\ln\frac{\omega}{8c}\right)c^{p-2}\left(\frac{4}{c_2R}\right)^p=2^{11}(\ln 2)\omega^{p-2}
L_2^{-p}d_*^{-p}R^{-p}.
\end{split}\end{equation*}
Keeping in mind $|t_*|\leq L_1dR^p$, we deduce from \eqref{dd*} the estimate
\begin{equation*}\begin{split}\esssup_{-t_*<t<0}\int_{K_{c_2\frac{R}{2}}\times\{t\}}\left(\psi^-\right)^2\zeta^p dx&\leq
C|t_*|\omega^{p-2}
L_2^{-p}d_*^{-p}R^{-p}|K_{c_2\frac{R}{4}}|
\leq CL_1L_2^{-p}|K_{c_2\frac{R}{4}}|,\end{split}\end{equation*}
where $C=2^{N+12}\gamma_2>1$. At this point, we choose
\begin{equation*}L_2\geq (2^{N+12}\gamma_2)^{\frac{1}{p}}\nu_1^{-\frac{1}{p}}L_1^{\frac{1}{p}}.\end{equation*}
For such a choice of $L_2$, the above estimate yields
\begin{equation}\begin{split}\label{SS3}\esssup_{t_*<t<0}\int_{K_{c_2\frac{R}{2}}\times\{t\}}\left(\psi^-\right)^2\zeta^p dx\leq \nu_1|K_{c_2\frac{R}{4}}|.\end{split}\end{equation}
The left-hand side of \eqref{SS3} is estimated below by integrating
over the smaller set
$$\{x\in K_{c_2\frac{R}{4}}:u<\mu_-+\frac{\omega}{2^6}\}\subset K_{c_2\frac{R}{2}}.$$
On such a set, $\zeta\equiv 1$ and
\begin{equation*}\begin{split}\psi^-&=\ln^+\left(\frac{\frac{\omega}{8}}{\frac{\omega}{8}-(u-k)_-+\frac{\omega}{2^6}}\right)
\geq 2\ln2>1.\end{split}\end{equation*}
This gives
\begin{equation}\begin{split}\label{SS4}\int_{K_{c_2\frac{R}{2}}\times\{t\}}\left(\psi^-\right)^2\zeta^p dx\geq \big|\{x\in K_{c_2\frac{R}{4}}:u<\mu_-+\frac{\omega}{2^6} \}\big|,\end{split}\end{equation}
for all $t\in(t_*,0)$.
Combining \eqref{SS4} with \eqref{SS3}, we conclude that for any $t\in(t_*,0)$ there holds
\begin{equation*}\begin{split}\big|\{x\in K_{c_2\frac{R}{4}}:u<\mu_-+\frac{\omega}{2^6} \}\big|\leq \nu_1
|K_{c_2\frac{R}{4}}|,\end{split}\end{equation*}
which proves the lemma.
\end{proof}
With the help of the preceding two lemmas we can now prove the following proposition which is the main result in this section.
\begin{proposition}\label{1st main result}Suppose that $Q(c_1R^p,c_2R)\subset \overline Q\subset\Omega_T$.
Let $\nu_0$ be a constant chosen
according to \eqref{nu0} and assume that \eqref{1st} holds for some $\bar t\in I_\omega$ and for all $\bar x\in K_{\omega}$. Then
there exists a constant $\nu_1\in(0,1)$ depending only upon the data such that the following holds. For the constant $L_2>0$ which is determined
a priori only in terms of $\{N,C_0,C_1,L_1\}$ such that
\begin{equation}\label{L2leq}L_2\geq (2^{N+12}\gamma_2)^{\frac{1}{p}}\nu_1^{-\frac{1}{p}}L_1^{\frac{1}{p}},\end{equation}
and
$c_2=L_2d_*$, there holds
\begin{equation}\label{DeGiorgi2}u(x,t)>\mu_-+\frac{\omega}{2^{7}}\qquad\text{a.e.}\quad\text{in}\quad Q\left(-t_*,c_2\frac{R}{8}\right).\end{equation}
\end{proposition}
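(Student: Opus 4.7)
The goal is to upgrade the measure-theoretic smallness furnished by Lemma \ref{1st expand time} -- which holds uniformly in $t \in (t_*, 0)$ -- to a pointwise lower bound on the sub-cylinder $Q(-t_*, c_2 R/8)$. The vehicle will be a De Giorgi iteration over cylinders shrinking only in the spatial directions, leveraging two key facts: by Corollary \ref{cor5.2}, $u(\cdot, t_*) \geq \mu_- + \omega/8$ on $K_{c_2 R/2}$, so truncations below the level $\mu_- + \omega/8$ carry zero initial datum; and the pointwise-in-time smallness from Lemma \ref{1st expand time}, upon integration in $t$, supplies the starting estimate for the iteration.

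\textbf{Iteration setup and energy step.} For $n \geq 0$ I take
$$\rho_n = \frac{c_2 R}{8} + \frac{c_2 R}{2^{n+3}}, \qquad k_n = \mu_- + \frac{\omega}{2^7} + \frac{\omega}{2^{n+7}},$$
cutoff functions $\zeta_n = \zeta_n(x)$ (depending only on the space variable) with $\zeta_n \equiv 1$ on $K_{\rho_{n+1}}$, $\supp \zeta_n \subset K_{\rho_n}$, $|D\zeta_n| \leq 2^{n+4}/(c_2 R)$, and cylinders $Q_n = K_{\rho_n} \times (t_*, 0)$. Apply \eqref{Caccioppoli} to $(u - k_n)_-$ on $Q_n$. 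Since $k_n \leq \mu_- + \omega/2^6 < \mu_- + \omega/8$, Corollary \ref{cor5.2} forces $(u-k_n)_-(\cdot, t_*) \equiv 0$ on $K_{\rho_n}$, killing the initial term. Because $\zeta_n$ is time-independent, the bound $U \leq \bar\nu \iint (u - k_n)_- \partial_t \zeta_n^p \, dx dt$ used at the start of the proof of Lemma \ref{DeGiorgi} gives $U \leq 0$ directly, leaving
$$\esssup_{t_* < t < 0} \int_{K_{\rho_n} \times \{t\}} (u-k_n)_-^2 \zeta_n^p \, dx + \iint_{Q_n} |D(u-k_n)_- \zeta_n|^p \, dx dt \leq \gamma_1 \iint_{Q_n} (u-k_n)_-^p |D\zeta_n|^p \, dx dt.$$

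\textbf{Recursion and the crucial scaling cancellation.} Combining this with parabolic Sobolev \eqref{Sobolevf}, exactly as in the proof of Lemma \ref{DeGiorgi}, and writing $A_n = |\{(x,t) \in Q_n : u < k_n\}|$, $Y_n = A_n / |Q_n|$, one obtains a recursion
$$Y_{n+1} \leq C \, 2^{\kappa n} \, \omega^{p(p-2)/N} \left( \frac{|t_*|}{(c_2 R)^p} \right)^{p/N} Y_n^{1+p/N}$$
for a suitable exponent $\kappa$ and $C = C(\text{data})$. The essential identity is $|t_*|/(c_2 R)^p \leq L_1 L_2^{-p} \, d/d_*^p = L_1 L_2^{-p} \, \omega^{2-p}$, by \eqref{dd*} and the fact that $|t_*| \leq L_1 d R^p$. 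Therefore the powers of $\omega$ combine as $\omega^{p(p-2)/N + p(2-p)/N} = \omega^0 = 1$ and cancel exactly, producing $Y_{n+1} \leq C' \, 2^{\kappa n} Y_n^{1+p/N}$ with $C'$ depending only on the data, $L_1$, and $L_2$. The fast convergence lemma \cite[Ch.~I, Lemma 4.1]{Di93} now furnishes an $\omega$-independent threshold $\nu_1 = \nu_1(\text{data}, L_1, L_2) \in (0,1)$ such that $Y_0 \leq \nu_1$ implies $Y_n \to 0$. Since $Q_0 = K_{c_2 R/4} \times (t_*, 0)$ and $k_0 = \mu_- + \omega/2^6$, integrating \eqref{1st expand time formula} in $t$ over $(t_*, 0)$ yields $A_0 \leq \nu_1 |Q_0|$, so the iteration runs, and letting $n \to \infty$ produces \eqref{DeGiorgi2}.

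\textbf{Main obstacle.} The crux is to ensure $\nu_1$ is independent of $\omega$: the Sobolev step brings in a factor $\omega^{p(p-2)/N}$ (because $(u-k_n)_-$ is only bounded by $\omega/2^6$, not by a universal constant), while the anisotropic ratio $|t_*|/(c_2 R)^p$ contributes $\omega^{(2-p)p/N}$; these must cancel exactly. This cancellation is precisely what the calibration $m_1 = p m_2/(p-1)$ in \eqref{d} is designed to achieve via the scaling identity \eqref{dd*}. The use of \emph{purely spatial} cutoffs -- made legitimate here by Corollary \ref{cor5.2} (initial vanishing) and Lemma \ref{1st expand time} (smallness at every time slice) -- is what allows us to exploit this cancellation; otherwise a time-dependent cutoff would reintroduce the unfavorable $\omega^{N+1}$ dependence seen in $\nu_0$ in Lemma \ref{DeGiorgi} and would preclude an $\omega$-free $\nu_1$.
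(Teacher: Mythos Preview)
Your proposal is correct and follows essentially the same route as the paper: a De~Giorgi iteration with purely spatial cutoffs on the cylinders $K_{\rho_n}\times(t_*,0)$, using Corollary~\ref{cor5.2} to kill the initial term, Lemma~\ref{1st expand time} to seed $Y_0\leq\nu_1$, and the identity~\eqref{dd*} to cancel all powers of~$\omega$ in the recursion.

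Two places warrant a word of care. First, your justification for $U\leq0$ by invoking the bound from the proof of Lemma~\ref{DeGiorgi} is not a direct transplant: there the cutoff vanished on the full parabolic boundary (in particular at the initial time), whereas here $\zeta_n(\cdot,t_*)\not\equiv0$ and it is $(u-k_n)_-(\cdot,t_*)$ that vanishes. The paper supplies a short case analysis on the sign of $k_n$ (the claim~\eqref{claim2}) to verify $U\leq0$ in this setting; your argument is morally the same but needs this check. Second, you write $\nu_1=\nu_1(\text{data},L_1,L_2)$, which as stated is circular, since $L_2$ is chosen via~\eqref{L2leq} in terms of $\nu_1$. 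The resolution---made explicit in the paper---is that~\eqref{L2leq} forces $L_1/L_2^p<1$, so the factor $(L_1L_2^{-p})^{p/N}$ in the recursion can be bounded by $1$, leaving the iteration constant, and hence $\nu_1$, depending on the data alone.
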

\begin{proof}
Let $\gamma$ and $\gamma_1$ be the constants determined by Lemma \ref{Sobolev} and \ref{Caccioppoli}, respectively, depending only upon the data
$\{N,C_0,C_1\}$.
We now choose
\begin{equation}\label{nu1}\nu_1=2^{-3N^2(1+\frac{2}{N})}2^{-22(N+2)-N^2}
\gamma^{-N}\gamma_1^{-N-2}\end{equation}
and choose $L_2$ satisfying \eqref{L2leq}. By Lemma \ref{1st expand time}, we conclude from \eqref{1st expand time formula} that
\begin{equation}\label{SS6}\int_{t_*}^0\big|\{x\in K_{c_2\frac{R}{4}}:u<\mu_-+\frac{\omega}{2^6}\}\big|
dt<\nu_1|K_{c_2\frac{R}{4}}\times(t_*,0)|.\end{equation}
Next, we define two decreasing sequences of numbers
\begin{equation*}R_n=\frac{R}{8}+\frac{R}{2^{n+3}},\qquad k_n=\mu_-+\frac{\omega}{2^7}+\frac{\omega}{2^{n+7}},\qquad n=0,1,2,\cdots.\end{equation*}
We set $Q_n=Q(-t_*,c_2R_n)$
and choose piecewise smooth cutoff functions $\zeta_n(x)$ defined in $K_{c_2R_n}$ and satisfying $0\leq\zeta_n\leq 1$ in $K_{c_2R_n}$, $\zeta_n\equiv 1$ in $K_{c_2R_{n+1}}$
and $|D\zeta_n|\leq 2^{n+4}(c_2R)^{-1}$. We recall that from \eqref{DeGiorgi1} there holds $(u-k_n)_-(\cdot,t_*)= 0$ in $K_{c_2R_n}$.
Write the energy estimate \eqref{Caccioppoli} for $(u-k_n)_-$ over the cylinder $Q_n$, we obtain
\begin{equation*}\begin{split}\esssup_{t_*<t<0}&\int_{K_{c_2R_n}\times\{t\}}(u-k_n)_-^2\zeta_n^p dx
+\iint_{Q_n}|D(u-k_n)_-\zeta_n|^p dxdt\\
\leq &\gamma_1\iint_{Q_n}(u-k_n)_-^p|D\zeta_n|^pdxdt
+U(K_{c_2R_n},t_*,0,-(u-k_n)_-\zeta_n^p),
\end{split}\end{equation*}
since $\partial_t\zeta_n\equiv 0$. Next, we claim that
\begin{equation}\label{claim2}U(K_{c_2R_n},t_*,0,-(u-k_n)_-\zeta_n^p)\leq 0.\end{equation}
To prove \eqref{claim2}, we first consider the case $k_n< 0$. In this case, either $(u-k_n)_-=0$ or $u\leq k_n<0$. We obtain
\begin{equation*}\begin{split}U&(K_{c_2R_n},t_*,0,-(u-k_n)_-\zeta_n^p)\\&=-\bar\nu\int_{K_{c_2R_n}}(u-k_n)_-\zeta_n^p dx\bigg|_{t=t_*}^{0}+\bar\nu\int_{t_*}^{0}\int_{K_{c_2R_n}} \frac{\partial (u-k_n)_-}{\partial t}\zeta_n^p dxdt
\\&=0.\end{split}\end{equation*}
In the case $k_n\geq 0$, we note that
$(u-k_n)_-\chi_{[u\leq0]}=k_n\chi_{[u\leq0]}+u_-$ and $\partial_t(u-k_n)_-\chi_{[u\leq 0]}=\partial_tu_-$.
Taking into account $(u-k_n)_-(\cdot,t_*)= 0$ in $K_{c_2R_n}$, we get
\begin{equation*}\begin{split}U&(K_{c_2R_n},t_*,0,-(u-k_n)_-\zeta_n^p)\\&=-\int_{K_{c_2R_n}} v(\cdot,t)\chi_{[u\leq 0]}(u-k_n)_-\zeta_n^p dx\bigg|_{t=t_*}^{0}+\int_{t_*}^{0}\int_{K_{c_2R_n}} v\chi_{[u\leq 0]}\frac{\partial (u-k_n)_-}{\partial t}\zeta_n^p dxdt
\\&\leq -\int_{K_{c_2R_n}} v(\cdot,t)\chi_{[u\leq 0]}u_-\zeta_n^p dx\bigg|_{t=0}+\bar\nu\int_{K_{c_2R_n}}u_-\zeta_n^p dx\bigg|_{t=t_*}^{0}\leq 0,\end{split}\end{equation*}
which proves the claim.
At this stage, we arrive at
\begin{equation*}\begin{split}\esssup_{t_*<t<0}&\int_{K_{c_2R_n}\times\{t\}}(u-k_n)_-^2\zeta_n^p dx
+\iint_{Q_n}|D(u-k_n)_-\zeta_n|^p dxdt
\leq \gamma_1\iint_{Q_n}(u-k_n)_-^p|D\zeta_n|^pdxdt.
\end{split}\end{equation*}
Set
$$A_n=\iint_{Q_n}\chi_{[(u-k_n)_->0]}dxdt\quad\text{and}\quad Y_n=\frac{A_n}{|Q_n|}.$$
With this notation, the estimate \eqref{SS6} reads
\begin{equation}\label{Y00}Y_0\leq\nu_1.\end{equation}
Keeping in mind $(u-k_n)_-\leq 2^{-6}\omega$ and $|D\zeta_n|\leq 2^{n+4}(c_2R_n)^{-1}$, we deduce
\begin{equation*}\begin{split}\esssup_{t_*<t<0}&\int_{K_{c_2R_n}\times\{t\}}(u-k_n)_-^2\zeta_n^p dx
+\iint_{Q_n}|D(u-k_n)_-\zeta_n|^p dxdt
\leq \gamma_12^{-2}\omega^p2^{2n}c_2^{-p}R_n^{-p}A_n.
\end{split}\end{equation*}
Applying the parabolic Sobolev's inequality \eqref{Sobolevf}, we obtain
\begin{equation*}\begin{split}\iint_{Q_n}&|(u-k_n)_-\zeta|^{p\frac{N+2}{N}}dxdt\\&\leq \gamma
\left(\esssup_{t_*<t<0}\int_{K_{c_2R_n}\times\{t\}}(u-k_n)_{-}^2\zeta_n^2  dx\right)^{\frac{p}{N}}
\iint_{Q_n}|D(u-k_n)_{-}\zeta_n|^p dxdt\\
&\leq 8^{2(1+\frac{2}{N})}\gamma\gamma_1^{1+\frac{2}{N}}2^{n(2+\frac{4}{N})}\omega^{p(1+\frac{p}{N})}c_2^{-p(1+\frac{p}{N})}R^{-p
(1+\frac{p}{N})}A_n^{1+\frac{p}{N}}.\end{split}\end{equation*}
On the other hand, we estimate below
\begin{equation*}\begin{split}\iint_{Q_n}&|(u-k_n)_-\zeta|^{p\frac{N+2}{N}}dxdt
\geq \iint_{Q_{n+1}}|(u-k_n)_-|^{p\frac{N+2}{N}}\chi_{[(u-k_{n+1})_->0]}dxdt\\&\geq (k_n-k_{n+1})^{p\frac{N+2}{N}}A_{n+1}
\geq \left(\frac{\omega}{2^{n+8}}\right)^{p\frac{N+2}{N}}A_{n+1} .\end{split}\end{equation*}
Combining the estimates above we infer that
\begin{equation*}\begin{split}Y_{n+1}\leq 2^{3n(1+\frac{2}{N})}2^{22(1+\frac{2}{N})}
\gamma\gamma_1^{1+\frac{2}{N}}c_2^{-p(1+\frac{p}{N})}\omega^{(p-2)\frac{p}{N}}R^{-p(1+\frac{p}{N})}\frac{|Q_n|}{|Q_{n+1}|}
|Q_n|^{\frac{p}{N}}Y_n^{1+\frac{p}{N}}.\end{split}\end{equation*}
Taking into account $|Q_n|=L_2^Nd_*^NR_n^{N}t_*$, $8^{-1}R\leq R_n\leq 4^{-1}R$, $t_*\leq L_1dR^p$ and the relation \eqref{dd*}, we obtain
\begin{equation*}\begin{split}Y_{n+1}&\leq 2^{3n(1+\frac{2}{N})}2^{22(1+\frac{2}{N})+N}
\gamma\gamma_1^{1+\frac{2}{N}}\omega^{(p-2)\frac{p}{N}}\left(\frac{L_1}{L_2^{p}}\right)^{\frac{p}{N}}
\left(\frac{d}{d_*^p}\right)^{\frac{p}{N}}Y_n^{1+\frac{p}{N}}\\&\leq
2^{3n(1+\frac{2}{N})}2^{22(1+\frac{2}{N})+N}
\gamma\gamma_1^{1+\frac{2}{N}}Y_n^{1+\frac{p}{N}}
\end{split}\end{equation*}
since
\begin{equation*}\begin{split}|Q_n|\leq
L_1L_2^Nd_*^Nd R^{N+p},\qquad \frac{|Q_n|}{|Q_{n+1}|}\leq 2^N\qquad\text{and}\qquad\frac{L_1}{L_2^{p}}\leq \frac{\nu_1}{2^{N+12}\gamma_2}<1.\end{split}\end{equation*}
Recalling from \eqref{nu1} and \eqref{Y00} that
\begin{equation*}Y_0\leq \nu_1<\left(2^{3(1+\frac{2}{N})}\right)^{-\frac{N^2}{p^2}}\left(2^{22(1+\frac{2}{N})+N}
\gamma\gamma_1^{1+\frac{2}{N}}\right)^{-\frac{N}{p}}.\end{equation*}
By the lemma on fast geometric convergence of sequences (cf. \cite[Chapter I, Lemma 4.1]{Di93}), we conclude that
$Y_n\to0$ as $n\to\infty$, which proves the proposition.
\end{proof}
From now on, we choose $L_1=2$. For such a choice of $L_1$, we determine $L_2$ by
\begin{equation}\label{conditioin for L2 final}L_2= 8(2^{N+12}\gamma_2)^{\frac{1}{p}}\nu_1^{-\frac{1}{p}},\end{equation}
where $\nu_1$ is the constant chosen according to \eqref{nu1}.
By these choices, taking \eqref{DeGiorgi2} into account, we obtain the desired estimate \eqref{osc} under the assumption of first alternative.
\section{The second alternative}
In this section we will establish the estimate \eqref{osc} for the second alternative. We start with the following lemma whose proof we omit.
\begin{lemma}Let $\nu_0$ be a constant chosen
according to \eqref{nu0} and assume that \eqref{2nd} holds for some fixed $\bar t\in I_\omega$ and $\bar x\in K_\omega$. There exists a time level $t_0\in [\bar t-dR^p,\bar t-\frac{1}{2}\nu_0 dR^p]$ such that
\begin{equation}\label{t0}\big|\{x\in \bar x+K_{d_*R}:u>\mu_+-\frac{\omega}{4}\}\big|\leq\left(\frac{1-\nu_0}{1-\frac{1}{2}\nu_0}\right)|K_{d_*R}|.\end{equation}
\end{lemma}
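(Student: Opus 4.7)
The plan is to prove the lemma by a standard pigeonhole/contradiction argument on time slices, similar to the ``existence of a good time level'' technique that appears classically in the analysis of the second alternative for singular parabolic equations (cf.\ DiBenedetto's monograph).

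I would proceed as follows. Fix the pair $(\bar x,\bar t)\in K_\omega\times I_\omega$ provided by the second alternative, so that
\begin{equation*}
\bigl|\{(x,t)\in (\bar x,\bar t)+Q(dR^p,d_*R): u>\mu_+-\tfrac{\omega}{4}\}\bigr|\leq (1-\nu_0)\,|Q(dR^p,d_*R)|.
\end{equation*}
Denote by $A(t)=\{x\in \bar x+K_{d_*R}: u(x,t)>\mu_+-\tfrac{\omega}{4}\}$ the bad slice at time $t$. Suppose, for contradiction, that the conclusion \eqref{t0} fails, i.e.\ that for \emph{every} $t_0$ in the sub-interval $J:=[\bar t-dR^p,\bar t-\tfrac{1}{2}\nu_0 dR^p]$ one has the strict reverse inequality
\begin{equation*}
|A(t_0)| > \frac{1-\nu_0}{1-\tfrac{1}{2}\nu_0}\,|K_{d_*R}|.
\end{equation*}

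Integrating this pointwise lower bound in $t$ over $J$, whose length is exactly $(1-\tfrac{1}{2}\nu_0)\,dR^p$, and using Fubini, I would obtain
\begin{equation*}
\bigl|\{(x,t)\in (\bar x+K_{d_*R})\times J : u>\mu_+-\tfrac{\omega}{4}\}\bigr| > \frac{1-\nu_0}{1-\tfrac{1}{2}\nu_0}\cdot (1-\tfrac{1}{2}\nu_0)\,dR^p\cdot |K_{d_*R}| = (1-\nu_0)\,|Q(dR^p,d_*R)|.
\end{equation*}
Since $(\bar x+K_{d_*R})\times J\subset (\bar x,\bar t)+Q(dR^p,d_*R)$, the measure on the left is majorized by the measure of the full bad set in the intrinsic cylinder, yielding a strict contradiction with the second alternative \eqref{2nd}. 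Thus the negation is false and the desired $t_0\in J$ exists.

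There is really no conceptual obstacle here: the whole argument is a one-line averaging/pigeonhole observation and does not interact with the singular structure of the equation, the monotone graph $\beta$, or the logarithmic test functions. The only mild bookkeeping point is to check the arithmetic of the interval length so that the strict inequality on slices integrates to exactly the forbidden bound $(1-\nu_0)|Q(dR^p,d_*R)|$, which is why the particular ratio $(1-\nu_0)/(1-\tfrac{1}{2}\nu_0)$ and the time cutoff $\bar t-\tfrac{1}{2}\nu_0 dR^p$ appear in the statement. The substance of the second alternative, and where the real difficulty will lie, is in the subsequent use of this starting time level to propagate smallness of $|A(t)|$ forward in time via the logarithmic estimate \eqref{logarithmic}; the present lemma is merely the elementary kick-off.
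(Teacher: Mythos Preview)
Your argument is correct and is precisely the standard pigeonhole/averaging argument one expects here; the paper itself omits the proof, treating the lemma as routine. Your check of the arithmetic (length of $J$ equal to $(1-\tfrac{1}{2}\nu_0)\,dR^p$, cancellation yielding $(1-\nu_0)|Q(dR^p,d_*R)|$) is exactly what is needed, and your closing remark correctly identifies this as an elementary kick-off before the genuinely delicate logarithmic propagation.
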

The estimate \eqref{t0} is the starting point for the analysis of the second alternative and we can now ignore the inequality \eqref{2nd}.
Next, since $\mu_+\geq |\mu_-|$ and $\mu_+-\mu_->\frac{\omega}{2}$, then $\mu_+\geq0$ and $4\mu_+>\omega$. For $k\geq\mu_+-\frac{\omega}{4}$, we observe that if $(u-k)_+>0$ then $u>k>0$. So we can always discard the term involving $U$ in the energy estimates.
Let $m_2\geq 2$ be the quantity which will be determined in \S \ref{m2}. We give the following lemma regarding the expansion of positivity
in time direction.
\begin{lemma}\label{2nd expand time}There exists $\tilde n>1$ depending only upon the data and $\nu_0$, and independent of $m_2$ such that
\begin{equation}\label{tilde t}\big|\{x\in \bar x+K_{d_*R}:u>\mu_+-\frac{\omega}{2^{m_2+\tilde n}}\}\big|\leq\left(1-\left(\frac{\nu_0}{2}\right)^2\right)|K_{d_*R}|\end{equation}
for all $t\in(t_0,\tilde t)$ where
\begin{equation}\label{level tilde t}
\tilde t=t_0+\frac{\nu_0}{2}2^{-\tilde np(2-p)}\left(\frac{\omega}{2^{m_2}}\right)^{(1-p)(2-p)}R^p.\end{equation}
\end{lemma}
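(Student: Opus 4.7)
I would prove \eqref{tilde t} by invoking the logarithmic energy estimate \eqref{logarithmic} on the cylinder $(\bar x + K_{d_*R}) \times (t_0, \tilde t)$ with the logarithmic function built from the choices
\begin{equation*}
k = \mu_+ - \frac{\omega}{2^{m_2}}, \qquad H_k^+ = \frac{\omega}{2^{m_2}}, \qquad c = \frac{\omega}{2^{m_2+\tilde n}},
\end{equation*}
and a cutoff $\zeta = \zeta(x)$ with $\zeta \equiv 1$ on a suitable inner sub-cube and $|D\zeta| \lesssim (d_*R)^{-1}$. Because $\mu_+ \geq |\mu_-|$ together with $\mu_+ - \mu_- \geq \omega/2$ forces $\mu_+ \geq \omega/4$, and since $m_2 \geq 2$, we have $k \geq 0$; hence $\psi^+$ and $(\psi^+)'$ both vanish on $\{u \leq 0\}$, so the singular contribution $U(\ldots,2\psi^+(\psi^+)'\zeta^p)$ is zero.

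The initial slice at $t_0$ is controlled by \eqref{t0}: since $\psi^+$ is supported in $\{u > k\} \subset \{u > \mu_+ - \omega/4\}$ and satisfies $\psi^+ \leq \ln(H_k^+/c) = \tilde n \ln 2$,
\begin{equation*}
\int_{\bar x + K_{d_*R}} (\psi^+)^2 \zeta^p\, dx \Big|_{t = t_0} \leq (\tilde n \ln 2)^2\, \frac{1-\nu_0}{1-\nu_0/2}\, |K_{d_*R}|.
\end{equation*}
For the spatial integral in \eqref{logarithmic} I use $\psi^+ \leq \tilde n \ln 2$, $|(\psi^+)'|^{2-p} \leq c^{p-2}$, and $|D\zeta|^p \lesssim (d_*R)^{-p}$. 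The decisive observation is the identity
\begin{equation*}
(p-2) + p(2-p) + (1-p)(2-p) = 0,
\end{equation*}
which precisely matches the exponents of $\omega/2^{m_2}$ coming from $c^{p-2}$, $d_*^{-p}$, and from the bracket in $\tilde t - t_0$ given by \eqref{level tilde t}; the resulting length $\tilde t - t_0$ is thus tailored so that
\begin{equation*}
c^{p-2}(d_*R)^{-p}(\tilde t - t_0) = 2^{\tilde n(2-p)(1-p)} \cdot \tfrac{\nu_0}{2}.
\end{equation*}
Since $(2-p)(1-p) \leq 0$ for $p \in [2-\epsilon_0, 2)$, the exponential factor is bounded by $1$ uniformly in $\tilde n$ and $m_2$, and the time integral is at most $C\, \tilde n\, \nu_0\, |K_{d_*R}|$ with $C$ depending only on the data.

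To conclude, on $\{u > \mu_+ - \omega/2^{m_2+\tilde n}\}$ one has $\psi^+ \geq (\tilde n - 1)\ln 2$ by direct verification, and combining this with the two bounds above gives
\begin{equation*}
\frac{|\{x \in \bar x + K_{d_*R} : u(x,t) > \mu_+ - \omega/2^{m_2+\tilde n}\}|}{|K_{d_*R}|} \leq \Bigl(\frac{\tilde n}{\tilde n - 1}\Bigr)^2 \frac{1-\nu_0}{1-\nu_0/2} + \frac{C\,\tilde n\, \nu_0}{(\tilde n - 1)^2 \ln 2}.
\end{equation*}
As $\tilde n \to \infty$ the first term tends to $(1-\nu_0)/(1-\nu_0/2) \leq 1 - \nu_0/2 < 1 - (\nu_0/2)^2$ and the second tends to $0$, so fixing $\tilde n$ sufficiently large in terms only of $\nu_0$ and the data enforces the claimed bound.

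The principal obstacle is ensuring that $\tilde n$ is independent of $m_2$; this rests entirely on the cancellation of the $\omega/2^{m_2}$-powers noted above and on the restriction $p \geq 2-\epsilon_0$, which keeps $2^{\tilde n(2-p)(1-p)}$ bounded. A minor technical point is that $\bar x + K_{(1+\sigma)d_*R}$ need not lie inside $\overline Q$, so in practice one applies \eqref{logarithmic} on $\bar x + K_{d_*R}$ itself with $\zeta = 1$ on a slightly smaller sub-cube, interpreting the measure bound accordingly (or equivalently one thickens $K_\omega$ by a fixed factor a priori).
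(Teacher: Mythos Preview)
Your approach is the same as the paper's---logarithmic estimate \eqref{logarithmic} with $k=\mu_+-\omega/2^{m_2}$, $H_k^+=\omega/2^{m_2}$, $c=\omega/2^{m_2+\tilde n}$, vanishing of the singular term $U$, and the cancellation of all powers of $\omega/2^{m_2}$ so that $\tilde n$ is independent of $m_2$. That part is correct.

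There is, however, a genuine gap in your final step. The lower bound $\psi^+\geq(\tilde n-1)\ln 2$ on the level set is only usable where $\zeta\equiv 1$, i.e.\ on the \emph{inner} sub-cube; to pass to the full cube $\bar x+K_{d_*R}$ as the lemma requires, you must add the measure of the annulus, which contributes a term $N\sigma\,|K_{d_*R}|$ if $\zeta\equiv 1$ on $K_{(1-\sigma)d_*R}$. Your displayed final inequality omits this term entirely. Moreover, your hypothesis $|D\zeta|\lesssim (d_*R)^{-1}$ with a universal implied constant forces $\sigma$ to be of order one, so the missing $N\sigma$ would already exceed the target margin $(\nu_0/2)^2$ once $\nu_0$ is small. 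Your remark at the end that this is a ``minor technical point'' understates it: without balancing $\sigma$ against $\nu_0$ the argument does not close.

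The paper handles this by keeping $\sigma$ free, so that $|D\zeta|\leq(\sigma d_*R)^{-1}$ and the time-integral term carries an extra factor $\sigma^{-p}$. The resulting bound is
\[
\Big|\{x\in\bar x+K_{d_*R}:u>\mu_+-\tfrac{\omega}{2^{m_2+\tilde n}}\}\Big|
\le\left[\Big(\tfrac{\tilde n}{\tilde n-1}\Big)^2\tfrac{1-\nu_0}{1-\nu_0/2}
+\tfrac{8\gamma_2}{\tilde n\,\sigma^p}+N\sigma\right]|K_{d_*R}|,
\]
and one then chooses $\sigma=\tfrac{3}{8N}\nu_0^2$ (so $N\sigma<\tfrac{3}{8}\nu_0^2$) and $\tilde n=8\gamma_2/\nu_0^{6}$ to make the bracket at most $1-(\nu_0/2)^2$. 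This two-parameter optimisation is the missing ingredient in your argument.
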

\begin{proof}Without loss of generality, we assume that $\bar x=0$. From \eqref{level tilde t}, we observe that
$$K_{d_*R}\times (t_0, \tilde t]\subset (0,\bar t)+Q(dR^p,d_*R).$$
This enable us to use the logarithmic estimate \eqref{logarithmic} over the cylinder $K_{d_*R}\times (t_0, \tilde t]$.
Set $k=\mu_+-\frac{\omega}{2^{m_2}}$ and $c=\frac{\omega}{2^{m_2+\tilde n}}$
where $\tilde n>1$ is to be determined.
We consider the logarithmic function
\begin{equation*}\begin{split}\psi^+=\ln^+\left(\frac{\frac{\omega}{2^{m_2}}}{\frac{\omega}{2^{m_2}}-(u-k)_++c}\right).\end{split}\end{equation*}
Then we have $\psi^+\leq \tilde n\ln2$ and
\begin{equation*}\begin{split}|(\psi^+)'|^{2-p}\leq \left(\frac{\omega}{2^{m_2-1}}\right)^{(p-1)(2-p)}
\left(\frac{2^{m_2+\tilde n}}{\omega}\right)^{p(2-p)}=2^{(p-1)(2-p)}
\left(\frac{\omega}{2^{m_2}}\right)^{(p-1)(2-p)}d_*^p
2^{\tilde np(2-p)}.\end{split}\end{equation*}
Choose a piecewise smooth cutoff function $\zeta(x)$, defined in $K_{d_*R}$, and satisfying $0\leq \zeta\leq1$ in $K_{d_*R}$,
$\zeta\equiv 1$ in $K_{(1-\sigma)d_*R}$ and $|D\zeta|\leq (\sigma d_*R)^{-1}$, where $\sigma\in (0,1)$ is to be determined.
Since $k>\mu_+-\frac{\omega}{4}$, then $U(K_{d_*R},t_0,\tilde t,2\psi^{+}\left(\psi^{+}\right)^\prime\zeta^p)=0$.
Then we deduce from \eqref{logarithmic} the logarithmic estimate
\begin{equation*}\begin{split}\esssup_{t_0<t<\tilde t}&\int_{K_{d_*R}\times\{t\}}\left(\psi^{+}\right)^2\zeta^p dx\leq
\int_{K_{d_*R}\times\{t_0\}}\left(\psi^{+}\right)^2\zeta^p dx+\gamma_2\int_{t_0}^{\tilde t}\int_{
K_{d_*R}}\psi^{+}|\left(\psi^{+}\right)^\prime|^{2-p}
|D\zeta|^pdxdt.\end{split}\end{equation*}
Keeping in mind the definition of $\tilde t$ and \eqref{t0}, we obtain
\begin{equation*}\begin{split}\esssup_{t_0<t<\tilde t}&\int_{K_{d_*R}\times\{t\}}\left(\psi^{+}\right)^2\zeta^p dx\\&\leq
\tilde n^2(\ln2)^2\big|\{x\in K_{d_*R}:u>k\}\big|+\gamma_2\frac{2\tilde n\ln 2}{\sigma^pd_*^pR^p}
\left(\frac{\omega}{2^{m_2}}\right)^{(p-1)(2-p)}d_*^p
2^{\tilde np(2-p)}(\tilde t-t_0)|K_{d_*R}|
\\&\leq \left(\tilde n^2(\ln2)^2\left(\frac{1-\nu_0}{1-\frac{1}{2}\nu_0}\right)+\nu_0(\ln2)\gamma_2\frac{\tilde n}{\sigma^p}\right)
|K_{d_*R}|.\end{split}\end{equation*}
At this stage, we proceed similarly as in \cite[Page 938-939]{HU}. To estimate below the integral on the left-hand side, we consider a smaller set defined by
$$S=\{x\in K_{(1-\sigma)d_*R}:u>\mu_+-\frac{\omega}{2^{m_2+\tilde n}}\}\subset K_{(1-\sigma)d_*R}.$$
On such a set, $\zeta\equiv 1$ and
\begin{equation*}\begin{split}\psi^+&=\ln^+\left(\frac{\frac{\omega}{2^{m_2}}}{\frac{\omega}{2^{m_2}}-(u-k)_++\frac{\omega}{2^{m_2+\tilde n}}}\right)
\geq (\tilde n-1)\ln2.\end{split}\end{equation*}
It follows that
\begin{equation*}\begin{split}\big|\{x\in \bar x+K_{d_*R}:u>\mu_+-\frac{\omega}{2^{m_2+\tilde n}}\}\big|\leq
\left(\left(\frac{\tilde n}{\tilde n-1}\right)^2\left(\frac{1-\nu_0}{1-\frac{1}{2}\nu_0}\right)+\frac{8\gamma_2}{\tilde n\sigma^p}+N\sigma\right)|K_{d_*R}|\end{split}\end{equation*}
for all $t\in(t_0,\tilde t)$.
To prove the lemma choose
\begin{equation}\label{tilde n}\sigma=\frac{3}{8N}\nu_0^2\qquad\text{and}\qquad\tilde n=\frac{8\gamma_2}{\nu_0^{6}}.\end{equation}
With this choice we see that $\tilde n$ is independent of $m_2$.
\end{proof}
\subsection{Expansion of positivity in space variable}
In this subsection we will establish an expansion of positivity result for the weak solutions in a larger cylinder.
We have to work with an equation in dimensionless form and our proof is in the spirit of \cite{AZ, CD, Di93, HU}.
To
start with, we introduce the change of variables
\begin{equation*}\begin{split}x'=\frac{x-\bar x}{2c_2R}\qquad\text{and}\qquad t'=4^p\left(\frac{t-\tilde t}{\tilde t-t_0}\right).\end{split}\end{equation*}
This transformation maps $[\bar x+K_{d_*R}]\times (t_0,\tilde t)\to K_{\frac{1}{2L_2}}\times (-4^p,0)$ and $[\bar x+K_{8c_2R}]
\times (t_0,\tilde t)\to K_{4}\times (-4^p,0)$.
Next, we set the new functions
\begin{equation*}\begin{split}\tilde u(x',t')=u(x,t)\qquad\text{and}\qquad\bar u(x',t')=\left(\tilde u(x',t')-\mu_+\right)
\left(\frac{2^{m_2+\tilde n-1}}{\omega}\right). \end{split}\end{equation*}
With these notations, the estimate \eqref{tilde t} can be rewritten as
\begin{equation}\label{positive small cube}\big|\{x'\in K_{\frac{1}{2L_2}}:\bar u(x',t')<-\frac{1}{2}\}\big|\geq \left(\frac{\nu_0}{2}\right)^2 |K_{\frac{1}{2L_2}}|\end{equation}
for all $t'\in(-4^p,0]$. On the other hand, we
set $\tilde w(x',t')=w(x,t)$. The function $v(x,t)$ in \eqref{frequently use weak form two phase stefan} can be written in the new variable as
\begin{equation}\label{tilde v}
	\tilde v(x',t')=\begin{cases}
	\bar\nu,&\qquad \text{on}\quad\big\{\bar u< -\mu_+\frac{2^{m_2+\tilde n-1}}{\omega}\big\},\\
	-\tilde w(x',t'),&\qquad \text{on}\quad\big\{\bar u=-\mu_+\frac{2^{m_2+\tilde n-1}}{\omega}\big\}.
	\end{cases}
\end{equation}
For any $\varphi\in W_p(Q_4)$ and $[t_1,t_2]\subset (-4^p,0]$, we rewrite the weak form \eqref{frequently use weak form two phase stefan} in terms of the new variables and new functions
as follows
\begin{equation}\begin{split}\label{dimensonal less weak form two phase stefan}-\frac{2^{m_2+\tilde n-1}}{\omega}&\int_{K_4} \tilde v(\cdot,t')\chi_{\left\{\bar u\leq -\mu_+\frac{2^{m_2+\tilde n-1}}{\omega}\right\}}\varphi(\cdot,t')dx'\bigg|_{t'=t_1}^{t_2}
\\&+
\frac{2^{m_2+\tilde n-1}}{\omega}\int_{t_1}^{t_2}\int_{K_4} \tilde v\chi_{\left\{\bar u\leq -\mu_+\frac{2^{m_2+\tilde n-1}}{\omega}\right\}}\frac{\partial \varphi}{\partial t'}dx'dt'
\\&\quad+\int_{t_1}^{t_2}\int_{K_4}
\left(\varphi\frac{\partial \bar u}{\partial t'}+\bar A(x',t',\bar u,D\bar u)\cdot D_{x'}\varphi\right)dx'dt'=0\end{split}\end{equation}
where
\begin{equation*}\begin{split}\bar A&(x',t',\bar u,D\bar u)=\frac{2^{m_2+\tilde n}(\tilde t-t_0)}{4^{p+1} c_2\omega R}
A\left(\bar x+2c_2Rx',\tilde t+\frac{\tilde t-t_0}{4^p}t',\frac{\omega}{2^{m_2+\tilde n-1}}\bar u+\mu_+,\frac{\omega}{2^{m_2+\tilde n}c_2R}D_{x'}\bar u\right).\end{split}\end{equation*}
From \eqref{A}, we obtain the structure conditions for $\bar A$ as follows
\begin{equation}\label{bar A}
	\begin{cases}
	\bar A(x',t',\bar u,D \bar u)\cdot D \bar u\geq C_0\Lambda(\omega)|D \bar u|^{p},\\
	|\bar A(x',t',\bar u,D \bar u)|\leq C_1\Lambda(\omega)|D \bar u|^{p-1},
	\end{cases}
\end{equation}
where
\begin{equation*}\Lambda(\omega)=\frac{2^{(m_2+\tilde n)(2-p)}\omega^{p-2}(\tilde t-t_0)}{4^{p+1}c_2^pR^p}.
\end{equation*}
According to \eqref{level tilde t}, the above identity reads
\begin{equation}\label{structure}\Lambda(\omega)=4^{-p-\frac{3}{2}}\nu_0L_2^{-p}2^{\tilde n(1-p)(2-p)},
\end{equation}
which is independent of $m_2$.
By abuse of the notation, we write $(x,t)$ instead of the new variables. The next lemma is an analogue of \cite[Chapter II, Lemma 1.1]{Di93}.
\begin{lemma}\label{subsolution lemma}Let $k\in (-1,0)$. Then there holds
\begin{equation}\begin{split}\label{subsolution}\int_{t_1}^{t_2}\int_{K_4}&
\varphi \frac{\partial (\bar u-k)_+}{\partial t}+\bar A(x,t,k+(\bar u-k)_+,D(\bar u-k)_+)\cdot D\varphi dxdt
\leq 0\end{split}\end{equation}
for all $\varphi\in W_p(Q_4)$ and $\varphi\geq0$.
\end{lemma}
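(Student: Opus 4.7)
The plan is to derive the desired inequality from the weak form \eqref{dimensonal less weak form two phase stefan} by a standard truncation / Heaviside approximation argument, using the crucial observation that the singular contributions vanish on the set $\{\bar u > k\}$ whenever $k \in (-1,0)$.

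First I would verify the key sign observation. Since $\mu_+ \geq |\mu_-|$ and $\mu_+-\mu_- \geq \omega/2$, we have $\mu_+ \geq \omega/4$, hence
\[
-\mu_+\frac{2^{m_2+\tilde n-1}}{\omega} \leq -2^{m_2+\tilde n-3} \leq -1,
\]
because $m_2 \geq 2$ and $\tilde n > 1$. Consequently, on the set $\bigl\{\bar u \leq -\mu_+ \tfrac{2^{m_2+\tilde n-1}}{\omega}\bigr\}$ where the singular indicator in \eqref{dimensonal less weak form two phase stefan} is active, one has $\bar u \leq -1 < k$, so $(\bar u-k)_+ = 0$ there. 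This is the mechanism that will eliminate the singular terms.

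Next I would fix a smooth nondecreasing approximation $h_\epsilon:\R\to[0,1]$ of the Heaviside function, with $h_\epsilon(s)=0$ for $s\leq 0$, $h_\epsilon(s)=1$ for $s\geq \epsilon$, and set $H_\epsilon(s)=\int_0^s h_\epsilon(\tau)\dd\tau$. Given $\varphi \in W_p(Q_4)$ with $\varphi\geq 0$, I would test \eqref{dimensonal less weak form two phase stefan} with $\varphi\, h_\epsilon(\bar u-k)$, which lies in $W_p(Q_4)$ thanks to the assumption $\partial_t\bar u\in L^2_{\loc}$ and the Lipschitz character of $h_\epsilon$. Because $h_\epsilon(\bar u-k)\equiv 0$ on the active set of the singular indicator, the first two (singular) terms in \eqref{dimensonal less weak form two phase stefan} drop out identically, leaving
\[
\int_{t_1}^{t_2}\!\!\int_{K_4}\Bigl[\varphi\, h_\epsilon(\bar u-k)\partial_t\bar u + \bar A(x,t,\bar u,D\bar u)\cdot D\bigl(\varphi\, h_\epsilon(\bar u-k)\bigr)\Bigr]\dd x\dd t = 0.
\]
Expanding the gradient and using $\bar A\cdot D\bar u\geq 0$ from \eqref{bar A} together with $h_\epsilon'\geq 0$ and $\varphi\geq 0$, the interior term $\varphi\, h_\epsilon'(\bar u-k)\bar A\cdot D\bar u$ is nonnegative and can be dropped, yielding
\[
\int_{t_1}^{t_2}\!\!\int_{K_4}\bigl[\varphi\, h_\epsilon(\bar u-k)\partial_t\bar u + h_\epsilon(\bar u-k)\bar A(x,t,\bar u,D\bar u)\cdot D\varphi\bigr]\dd x\dd t \leq 0.
\]

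Finally I would pass to the limit $\epsilon\to 0^+$. For the first term, writing $h_\epsilon(\bar u-k)\partial_t\bar u = \partial_t H_\epsilon(\bar u-k)$ and using $\partial_t\bar u\in L^2_{\loc}$ together with dominated convergence, one obtains in the limit $\chi_{\{\bar u>k\}}\partial_t\bar u = \partial_t(\bar u-k)_+$. For the flux term, $h_\epsilon(\bar u-k)\to \chi_{\{\bar u>k\}}$ pointwise, and on $\{\bar u>k\}$ we have $\bar u=k+(\bar u-k)_+$ and $D\bar u=D(\bar u-k)_+$; moreover the growth bound in \eqref{bar A} gives $\bar A(x,t,\cdot,0)\equiv 0$, so
\[
\chi_{\{\bar u>k\}}\bar A(x,t,\bar u,D\bar u) = \bar A\bigl(x,t,k+(\bar u-k)_+,D(\bar u-k)_+\bigr),
\]
and $L^{p/(p-1)}$-domination plus a.e. convergence give the limit of the flux integral. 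Combining both yields exactly \eqref{subsolution}.

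The main delicate point is the limit passage for the time-derivative term: one must ensure that the product $\varphi h_\epsilon(\bar u-k)\partial_t\bar u$ converges to $\varphi\partial_t(\bar u-k)_+$ in an appropriate sense, which relies essentially on the integrability assumption \eqref{integrability}. Once this is handled, the rest of the argument is routine.
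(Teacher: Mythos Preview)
Your proposal is correct and follows essentially the same route as the paper: the paper tests \eqref{dimensonal less weak form two phase stefan} with the specific approximate Heaviside factor $\varphi_\epsilon=\varphi\,\frac{(\bar u-k)_+}{(\bar u-k)_++\epsilon}$, observes (from $\omega<4\mu_+$, equivalently your computation that $-\mu_+\tfrac{2^{m_2+\tilde n-1}}{\omega}\leq -1<k$) that the singular terms vanish, discards the nonnegative interior term via the ellipticity in \eqref{bar A}, and passes to the limit $\epsilon\downarrow 0$. Your use of a generic smooth Heaviside approximation $h_\epsilon$ in place of $s_+/(s_++\epsilon)$ is a purely cosmetic variation of the same argument.
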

\begin{proof}We proceed similarly as in \cite[page 18-19]{Di93}. In \eqref{dimensonal less weak form two phase stefan}
choose the testing function
\begin{equation*}\varphi_\epsilon= \varphi \frac{(\bar u-k)_+}{(\bar u-k)_++\epsilon},\quad\epsilon>0.\end{equation*}
In view of $\omega<4\mu_+$, we find that the first two terms in \eqref{dimensonal less weak form two phase stefan} vanish and there holds
\begin{equation*}\begin{split}\int_{t_1}^{t_2}\int_{K_4}&
\varphi_\epsilon\frac{\partial \bar u}{\partial t}+\frac{(\bar u-k)_+}{(\bar u-k)_++\epsilon}\bar A(x,t,\bar u,D\bar u)\cdot D\varphi dxdt
\\&\leq -C_0\Lambda(\omega)\int_{t_1}^{t_2}\int_{K_4}\frac{\epsilon \varphi}{((\bar u-k)_++\epsilon)^2} |D(\bar u-k)_+|^pdxdt\\&\leq 0.\end{split}\end{equation*}
Consequently, the inequality \eqref{subsolution} follows by passing to the limit $\epsilon\downarrow 0$.
\end{proof}
With the previous result at hand, we can now give the following lemma which concerns the expansion of positivity in space variable.
Since the constants in the proof should be uniformly bounded in $p$ when $p\to2$, we employ the idea from Alkhutov and Zhikov \cite{AZ}. Moreover, this approach enables us to obtain explicit estimates for the parameters which will
play a crucial role in the next subsection.
\begin{lemma}\label{dimension less lemma}Suppose that $\frac{3}{2}\leq p<2$. For any $\nu\in(0,1)$, there exists $\tilde m>1$ depending only upon the data, $\nu$ and $\nu_0$, such that
\begin{equation}\label{pro6.4formula}\big|\{x\in K_2:\bar u(x,t)\geq -2^{-\tilde m}\}\big|\leq\nu|K_2|\end{equation}
for all $t\in [-2^p,0]$.
\end{lemma}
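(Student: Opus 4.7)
My plan is to prove this by an expansion-of-positivity argument for $v:=-\bar u$. The hypothesis \eqref{positive small cube} reads
\begin{equation*}
|\{x\in K_{1/(2L_2)}:v(x,t)\geq 1/2\}|\geq (\nu_0/2)^2\,|K_{1/(2L_2)}|\qquad\text{for every }t\in(-4^p,0],
\end{equation*}
while \eqref{pro6.4formula} is equivalent to $|\{x\in K_2:v(x,t)\leq 2^{-\tilde m}\}|\leq\nu\,|K_2|$ for every $t\in[-2^p,0]$. Thus the task is to propagate a pointwise-in-time density of positivity of $v$ from the small cube $K_{1/(2L_2)}$ to the cube $K_2$, at a fixed (smaller) positivity level. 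The crucial structural feature is that the ellipticity constant $\Lambda(\omega)$ in \eqref{bar A} is, by \eqref{structure}, independent of $m_2$, so all quantitative estimates below will be $m_2$-independent.

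First I would derive the appropriate energy inequalities for $v$. Applying Lemma~\ref{subsolution lemma} with level $-k$ for $k\in(0,1)$, the truncation $(v-k)_-$ satisfies a supersolution-type inequality on $Q_4$. Testing it with $\varphi=(v-k)_-\zeta^p$, and separately with $\varphi=2\psi(\psi)'\zeta^p$ for a time-independent cutoff $\zeta$, would yield the standard Caccioppoli and logarithmic estimates for $(v-k)_-$ on any subcylinder of $Q_4$, \emph{without} any $U$-term on the right-hand side (the singular contribution already having been absorbed when Lemma~\ref{subsolution lemma} was established).

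The heart of the argument is a finite De Giorgi cascade along a geometric sequence of levels $k_j=2^{-j}$ and a doubling sequence of concentric cubes $K_{\rho_j}$ starting from $\rho_0=1/(2L_2)$. The inductive claim is
\begin{equation*}
|\{v(\cdot,t)\geq k_j\}\cap K_{\rho_j}|\geq \alpha_j\,|K_{\rho_j}|\qquad\forall\,t\in[-2^p,0],
\end{equation*}
for constants $\alpha_j\in(0,1)$ depending only on the data, $\nu_0$ and $j$. The inductive step combines the logarithmic inequality on $K_{\rho_{j+1}}\times[-2^p,0]$ (with cutoff equal to $1$ on $K_{\rho_j}$), which converts the level-$k_j$ density on $K_{\rho_j}$ into a uniform-in-time $L^1$ bound of $\psi_{j+1}^2$, where $\psi_{j+1}=\ln^+(k_j/(v+c_j))$, with the measure-theoretic shrinking lemma to upgrade this into a level-$k_{j+1}$ density on $K_{\rho_{j+1}}$. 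After a finite number of iterations the cube has grown past $K_2$, and then further iterations on the fixed cube $K_2$ drive the level down to $2^{-\tilde m}$ while raising the density to $1-\nu$; the resulting $\tilde m$ depends only on the data, $\nu$ and $\nu_0$.

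The main obstacle is to keep all constants bounded uniformly in $p$ as $p\to 2^-$, which is the whole point of the restriction $p\geq 3/2$. The integrals $\int\psi_{j+1}|\psi_{j+1}'|^{2-p}|D\zeta|^p$ appearing in \eqref{logarithmic} are precisely the singular-range obstructions flagged in the introduction; with $p\geq 3/2$, the exponent $2-p\leq 1/2$ is small enough that $|\psi_{j+1}'|^{2-p}$ can be bounded by a $p$-independent multiple of $k_{j+1}^{p-2}$, which combined with the $p$-independent Sobolev constant furnished by Lemma~\ref{Sobolev} closes the iteration. Tracking that the dependence on $j$ is tolerable and that the dependence on $m_2$ is genuinely absent constitutes the main quantitative bookkeeping of the proof.
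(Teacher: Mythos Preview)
Your plan has a genuine gap at its core: the logarithmic estimate \eqref{logarithmic} cannot carry the spatial cascade you describe. That inequality reads
\[
\sup_{t_0<t<t_1}\int\psi^2\zeta^p\,dx\ \leq\ \int_{\{t=t_0\}}\psi^2\zeta^p\,dx\ +\ \gamma\int_{t_0}^{t_1}\!\!\int\psi|\psi'|^{2-p}|D\zeta|^p\,dx\,dt,
\]
with $\zeta=\zeta(x)$ time-independent. In your setting there is \emph{no} time at which $\psi_{j+1}$ vanishes pointwise on $K_{\rho_{j+1}}$; the hypothesis is only a density bound on the much smaller cube $K_{1/(2L_2)}$. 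Hence the initial term is of size $(\ln(k_j/c_j))^2|K_{\rho_{j+1}}|$, and the resulting upper bound on $|\{v<c_j\}\cap K_{\rho_j}|$ is at best of order $|K_{\rho_{j+1}}|=2^N|K_{\rho_j}|$ --- no smallness is gained. The density information on $K_{\rho_j}$ simply does not feed into \eqref{logarithmic}; that estimate is a time-propagation tool, and in the paper it is only ever used (Lemmas~\ref{1st expand time}, \ref{2nd expand time}, \ref{3rd expand time 6.6}, \ref{3rd expand time I1}) when a previous De~Giorgi step has forced $\psi(\cdot,t_0)\equiv 0$.

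The paper's proof uses a genuinely different mechanism: the Alkhutov--Zhikov test function $\varphi=\zeta^p\big(-(1-\delta)k-(\bar u-k)_++V\big)^{1-p}$ produces, after Young's inequality, an estimate containing $\int\zeta^p|D\psi_k|^p$ with $\psi_k$ a suitable logarithm. The decisive step is then the \emph{slicewise} Sobolev--Poincar\'e inequality of Lemma~\ref{Sobolev1}, which uses precisely the density hypothesis \eqref{positive small cube} (the set where $\psi_k=0$ has measure $\gtrsim\nu_0^2|K_{1/(2L_2)}|$ inside $\{\zeta_1=1\}$ at every time) to convert $\int\zeta^p|D\psi_k|^p$ into $\int\zeta^p|\psi_k|^p$. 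This yields a differential inequality in time for $\int\zeta^p\phi_k(\bar u)$, analyzed via a Dini-derivative dichotomy that drives the iteration $Y_{i+1}\leq(1-|\delta|)Y_i$ along levels $-|\delta|^i$. Your sketch invokes Lemma~\ref{Sobolev} (the parabolic embedding) but never Lemma~\ref{Sobolev1}, and it is the latter that does all the work of turning density into spatial expansion here. A secondary issue is that in your scheme the factor $|\psi_{j+1}'|^{2-p}\leq c_j^{p-2}=2^{(j+1)(2-p)}$ grows geometrically with $j$, exactly the loss the Alkhutov--Zhikov construction is engineered to avoid; the paper's $\psi_k$ enters with exponent $p$, not $2-p$, after the Poincar\'e step.
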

\begin{proof}Before proceeding to the proof,
we introduce the auxiliary functions
\begin{equation*}\phi_k(\bar u)=\int_0^{(\bar u-k)_+}\frac{d\tau}{(-(1-\delta)k-\tau+V)^{p-1}}\end{equation*}
and
\begin{equation*}\psi_k(\bar u)=\ln\left(\frac{-(1-\delta)k+V}{-k(1-\delta)-(\bar u-k)_++V}\right)\end{equation*}
where $V=(-k)(-\delta)^{\frac{p}{p-1}}$ and $k$, $\delta\in (-\frac{1}{8},0)$ are to be determined.
Recalling the definition of $\bar u$, we have $\bar u\leq 0$ and $(\bar u-k)_+\leq -k$. Moreover, we obtain the inequality
\begin{equation*}\begin{split}(-k(1-\delta)-\tau +V)^{p-1}&\leq (-k(1-\delta)-\tau )^{p-1}+V^{p-1}\leq
(1-\delta)(-k(1-\delta)-\tau )^{p-1}\end{split}\end{equation*}
for any $0\leq\tau\leq-k$.
Taking into account
\begin{equation*}\begin{split}\phi_k(\bar u)&=\frac{1}{2-p}(-k(1-\delta)+V)^{2-p}\left\{1-\left(1-\frac{(\bar u-k)_+}{-k(1-\delta)+V}\right)^{2-p}\right\}\end{split}\end{equation*}
and $1-(1-z)^{2-p}\leq -(2-p)\ln(1-z)$, we conclude that
\begin{equation}\begin{split}\label{phi1}\phi_k(\bar u)\leq \ln\frac{-k(1-\delta)+V}{-k(1-\delta)-(\bar u-k)_++V}
\leq \ln\frac{-k(1-\delta)+V}{-k(-\delta)+V}\leq \ln\frac{1-\delta}{-\delta}.\end{split}\end{equation}
Let $\zeta=\zeta_1(x)\zeta_2(t)$ be a piecewise smooth cutoff function defined in $Q_4$. Suppose that $0\leq\zeta\leq1$ in $Q_4$,
$\zeta\equiv 1$ in $Q_2$, $\zeta\equiv 0$ on $\partial_PQ_4$, $|D\zeta_1|\leq 1$, $0\leq\partial_t\zeta_2\leq1$ and the sets $\{x\in K_4:\zeta_1(x)>-k\}$ are convex for all $k\in (-\frac{1}{8},0)$. Testing the weak formulation \eqref{subsolution} with the function
\begin{equation*}\varphi(x,t)=\frac{\zeta_1(x)^p\zeta_2(t)^p}{(-(1-\delta)k-(\bar u(x,t)-k)_++V)^{p-1}}\end{equation*}
and taking \eqref{bar A} into account, we have
\begin{equation*}\begin{split}\int_{t_1}^{t_2}\int_{K_4}&
\zeta^p\frac{\partial\phi_k(\bar u)}{\partial t}+C_0\Lambda(\omega)\frac{(p-1)\zeta^p |D(\bar u-k)_+|^p}{(-k(1-\delta)-(\bar u-k)_++V)^p} dxdt
\\&
\leq C_1\Lambda(\omega)\int_{t_1}^{t_2}\int_{K_4}\frac{p \zeta^{p-1}|D\zeta||D(\bar u-k)_+|^{p-1}}{(-k(1-\delta)-(\bar u-k)_++V)^{p-1}}dxdt\end{split}\end{equation*}
for any $[t_1,t_2]\subset (-4^p,0]$. In view of $p\geq \frac{3}{2}$, $\Lambda(\omega)\leq1$, and therefore, Young's inequality and \eqref{phi1}
allow us
to conclude that
\begin{equation*}\begin{split}\int_{K_4}&
\zeta^p \phi_k(\bar u)dx\bigg|_{t=t_1}^{t_2}+\frac{C_0\Lambda(\omega)}{4}\int_{t_1}^{t_2}\int_{K_4}\zeta^p |D\psi_k(\bar u)|^pdxdt
\\&
\leq \kappa_p\Lambda(\omega)\int_{t_1}^{t_2}\int_{K_4} \zeta^p|D\zeta|^pdxdt+
\int_{t_1}^{t_2}\int_{K_4}
p\zeta^{p-1}\phi_k(\bar u)\frac{\partial\zeta}{\partial t}dxdt\\&
\leq C_2 (t_2-t_1)\ln\frac{1-\delta}{-\delta},
\end{split}\end{equation*}
where
\begin{equation}\label{C2}\kappa_p=pC_1\left(\frac{2pC_1}{(p-1)C_0}\right)^{p-1}\qquad\text{and}\qquad
C_2=\frac{16C_1^2}{C_0}.\end{equation}
Next, we observe that
$$\big\{x\in K_4:\bar u(x,t)<-\tfrac{1}{2}\big\}\cap K_{\frac{1}{2L_2}}\subset\big\{x\in K_4:\phi_k(\bar u)=0\big\}\cap
\big\{x\in K_4:\zeta_1(x)=1\big\} $$
for any $t\in(-4^p,0]$. Applying Lemma \ref{Sobolev1} on each of the time slice
and keeping in mind \eqref{positive small cube}, we deduce
\begin{equation*}\int_{K_4}\zeta_1^p |\psi_k(\bar u)|^pdx\leq 4^{3N}\gamma^2L_2^{2(N-1)}\nu_0^{-2\frac{N-1}{N}}\int_{K_4}\zeta_1^p |D\psi_k(\bar u)|^pdx.\end{equation*}
Multiplying both sides of the above inequality by $\zeta_2(t)^p$ and integrating over $[t_1,t_2]$, we obtain
\begin{equation*}\begin{split}\int_{K_4}&
\zeta^p \phi_k(\bar u)dx\bigg|_{t=t_1}^{t_2}+\bar \Lambda(\omega)\int_{t_1}^{t_2}\int_{K_4}\zeta^p \psi_k(\bar u)^pdxdt
\leq C_2
(t_2-t_1)\ln\frac{1-\delta}{-\delta},\end{split}\end{equation*}
where
\begin{equation}\label{lambda bar}\bar \Lambda(\omega)=\gamma_3\nu_0^{2\frac{N-1}{N}}\Lambda(\omega)\qquad\text{and}\qquad
\gamma_3=4^{-3N-1}\gamma^{-2}L_2^{-2(N-1)}C_0^{-1}.\end{equation}
Recalling that $u\in C_{\loc}(0,T;L_{\loc}^2(\Omega))$, we obtain $\phi_k(\bar u),\ \psi_k(\bar u)^p\in  C_{\loc}(0,T;L_{\loc}^1(\Omega))$
and for any $t\in(-4^p,0]$ there holds
\begin{equation}\begin{split}\label{Dini}\frac{d^-}{dt}\int_{K_4\times\{t\}}&
\zeta^p \phi_k(\bar u)dx+\bar \Lambda(\omega)\int_{K_4\times\{t\}}\zeta^p \psi_k(\bar u)^pdx
\leq C_2
\ln\frac{1-\delta}{-\delta},\end{split}\end{equation}
where
$$\frac{d^-}{dt}\int_{K_4\times\{t\}}
\zeta^p \phi_k(\bar u)dx=\lim\sup_{h\to0+}\frac{1}{h}\left(
\int_{K_4\times\{t\}}
\zeta^p \phi_k(\bar u)dx-\int_{K_4\times\{t-h\}}
\zeta^p \phi_k(\bar u)dx\right).$$
At this stage, we introduce the quantities
\begin{equation*}Y_i=\sup_{-4^p\leq t\leq 0}\int_{K_4\cap [\bar u>-|\delta|^i]}\zeta^p (x,t)dx,\quad i=1,2,\cdots.\end{equation*}
To prove this lemma, it suffices to determine constants $\delta$ and $i_*$ depending only upon the data, $\nu$ and $\nu_0$, such that
$Y_{i_*}\leq\nu$.

Fix $i\in\mathbb{N}$,
we choose $k=-|\delta|^i$ and $V=|\delta|^{i+\frac{p}{p-1}}$.
For any $\epsilon>0$, there exists $t_0\in (-4^p,0]$ such that
\begin{equation*}\int_{K_4\cap [\bar u>-|\delta|^{i+1}]}\zeta^p (\cdot,t_0)dx\geq Y_{i+1}-\epsilon.\end{equation*}
Let
$$\mathcal{C}^+=\left\{t\in(-4^p,0]:\frac{d^-}{dt}\int_{K_4\times\{t\}}\zeta^p \phi_k(\bar u)dx\geq 0\right\}.$$
In the case $t_0\in \mathcal{C}^+$, we choose $\epsilon\in(0,\nu/2]$. In \eqref{Dini} take $t=t_0$, $k=-|\delta|^i$ and there holds
\begin{equation*}\begin{split}\bar \Lambda(\omega)\int_{K_4}\zeta^p (\cdot,t_0)\psi_{-|\delta|^i}(\bar u)^p(\cdot,t_0)dx
\leq C_2
\ln\frac{1-\delta}{-\delta}.\end{split}\end{equation*}
To estimate below the integral on the left-hand side, take into account
the domain of integration $K_4\cap [\bar u>-|\delta|^{i+1}]$. On such a set, $(\bar u+|\delta|^i)_+>|\delta|^i-|\delta|^{i+1}$.
Taking into account $V\leq |\delta|^{i+1}$, we deduce
\begin{equation*}\begin{split}\psi_k(\bar u)=\ln\left(\frac{(1-\delta)|\delta|^i+V}{(1-\delta)|\delta|^i-(\bar u+|\delta|^i)_++V}\right)\geq\ln\frac{1+|\delta|}{3|\delta|}.\end{split}\end{equation*}
Since $|\delta|<\frac{1}{8}$, then there holds
$$\frac{1+|\delta|}{|\delta|}\leq\left(\frac{1+|\delta|}{3|\delta|}\right)^2.$$
We use these estimates to conclude that
\begin{equation*}\begin{split}\bar \Lambda(\omega)\int_{K_4\cap [\bar u>-|\delta|^{i+1}]}\zeta^p (\cdot,t_0)dx
\leq 2C_2
\left(\ln\frac{1+|\delta|}{3|\delta|}\right)^{1-p}\end{split}\end{equation*}
and therefore
\begin{equation*}Y_{i+1}\leq \frac{\nu}{2}+\frac{2C_2}{\bar \Lambda(\omega)}
\left(\ln\frac{1+|\delta|}{3|\delta|}\right)^{1-p}.\end{equation*}
Then $Y_{i+1}\leq\nu$, provided
$$|\delta|\leq\frac{1}{3}\left(\exp\left(\frac{\nu\bar\Lambda(\omega)}{2C_2}\right)^{\frac{1}{1-p}}-\frac{1}{3}\right)^{-1}.$$
To this end, we choose
\begin{equation}\label{delta1}|\delta|= \frac{1}{30}\exp\left\{-64\left(\frac{4C_2}{\nu\bar\Lambda(\omega)}\right)^2\right\}.\end{equation}
We now turn our attention to the case $t_0\notin \mathcal{C}^+$. Denote by $t_*$ the least upper bound of the set
$$\mathcal{C}_{t_0}^+=\{t\in \mathcal{C}^+:t< t_0 \}.$$
In the case $t_0=t_*$, there exists a sequence $t_k'\in \mathcal{C}_{t_0}^+$ such that
$t_k'\to t_0$ as $k\to\infty$, and
\begin{equation*}\begin{split}\bar \Lambda(\omega)\int_{K_4}\zeta^p (\cdot,t_k')\psi_{-|\delta|^i}(\bar u)^p(\cdot,t_k')dx
\leq C_2
\ln\frac{1-\delta}{-\delta}.\end{split}\end{equation*}
Passing to the limit $t_k'\to t_0$ and taking into account that $\psi_{-|\delta|^i}(\bar u)^p\in  C_{\loc}(0,T;L_{\loc}^1(\Omega))$, we deduce
\begin{equation*}\begin{split}\bar \Lambda(\omega)\int_{K_4}\zeta^p (\cdot,t_0)\psi_{-|\delta|^i}(\bar u)^p(\cdot,t_0)dx
\leq C_2
\ln\frac{1-\delta}{-\delta}.\end{split}\end{equation*}
We may now repeat the same
arguments as in the previous proof and obtain $Y_{i+1}\leq\nu$, provided $\delta$ satisfies \eqref{delta1}.

Finally, we come to the case $t_0>t_*$. Let $j$ be an integer that can
be determined a priori only in terms of the data, $\nu$ and $\nu_0$.
Let us initially assume $Y_i>\nu$ for all $i=1,2,\cdots,j$.
Next, we claim that
\begin{equation}\label{claim3}Y_{i+1}\leq (1-|\delta|)Y_i\qquad i=1,2,\cdots,j.\end{equation}
Fix $i\in\{1,2,\cdots,j\}$. Since $t_0>t_*$, then there holds
$$\frac{d^-}{dt}\int_{K_4}\zeta^p(\cdot,t) \phi_{-|\delta|^i}(\bar u)(\cdot,t)dx\leq 0$$
for any $t\in (t_*,t_0)$.
It follows that
\begin{equation}\begin{split}\label{t0t*}\int_{K_4}\zeta^p(\cdot,t_0) \phi_{-|\delta|^i}(\bar u)(\cdot,t_0)dx\leq \int_{K_4}\zeta^p(\cdot,t_*) \phi_
{-|\delta|^i}(\bar u)(\cdot,t_*)dx.\end{split}\end{equation}
Recalling the definition of $t_*$, we have
\begin{equation}\begin{split}\label{Cdelta}\bar \Lambda(\omega)\int_{K_4}\zeta^p (\cdot,t_*)\psi_{-|\delta|^i}(\bar u)^p(\cdot,t_*)dx
\leq C_2
\ln\frac{1-\delta}{-\delta}=:C_\delta,\end{split}\end{equation}
with the obvious meaning of $C_\delta$.
Consider the set $K_4\cap [(\bar u+|\delta|^i)_+>\tau |\delta|^i]$ where $\tau\in [0,1]$. On this set
\begin{equation*}\begin{split}\psi_{-|\delta|^i}(\bar u)=\ln\left(\frac{(1-\delta)|\delta|^i+V}{(1-\delta)|\delta|^i-(\bar u+|\delta|^i)_++V}\right)\geq\ln\frac{1+2|\delta|}{1+2|\delta|-\tau},\end{split}\end{equation*}
since $V\leq|\delta|^{i+1}$. Combining this estimate with \eqref{Cdelta}, we obtain
\begin{equation*}\int_{K_4\cap [(\bar u+|\delta|^i)_+>\tau |\delta|^i]}\zeta^p(\cdot,t_*)dx\leq \frac{C_\delta}{\bar\Lambda(\omega)}\left(\ln\frac
{1+2|\delta|}{1+2|\delta|-\tau}\right)^{-p}.\end{equation*}
At this stage, we set
\begin{equation*}\tau_*=\frac{\exp\left(\frac{C_\delta}{\bar\Lambda(\omega)Y_i}\right)^{\frac{1}{p}}-1}{
\exp\left(\frac{C_\delta}{\bar\Lambda(\omega)Y_i}\right)^{\frac{1}{p}}}(1+2|\delta|).\end{equation*}
Since $Y_i>\nu$, then we have
\begin{equation*}\tau_*<\frac{\exp\left(\frac{C_\delta}{\bar\Lambda(\omega)\nu}\right)^{\frac{1}{p}}-1}{
\exp\left(\frac{C_\delta}{\bar\Lambda(\omega)\nu}\right)^{\frac{1}{p}}}(1+2|\delta|)=:\sigma(1+2|\delta|),\end{equation*}
with the obvious meaning of $\sigma$. For a technical reason, we introduce a constant $\sigma'>\sigma$ defined by
\begin{equation*}\sigma'=\frac{\exp\left(\frac{2C_\delta}{\bar\Lambda(\omega)\nu}\right)^{\frac{1}{p}}-1}{
\exp\left(\frac{2C_\delta}{\bar\Lambda(\omega)\nu}\right)^{\frac{1}{p}}}.\end{equation*}
At this point, we claim that
\begin{equation}\label{claim4}\sigma'(1+2|\delta|)<1-\sqrt{|\delta|}.\end{equation}
For the value of $|\delta|$ given by \eqref{delta1}, we have
\begin{equation*}|\delta|< \frac{1}{2}\exp\left\{-2^{\frac{p}{p-1}}\left(\frac{4C_2}{\nu\bar\Lambda(\omega)}\right)^\frac{1}{p-1}\right\}\end{equation*}
for all $\frac{3}{2}\leq p<2$. Then there holds
\begin{equation}\label{tau*}\left(\ln\left(\frac{1}{2|\delta|}\right)^{\frac{4C_2}{\nu\bar\Lambda(\omega)}}\right)^{\frac{1}{p}}
\leq\ln\left(\frac{1}{2|\delta|}\right)^{\frac{1}{2}}.\end{equation}
From
\eqref{delta1},
we observe that
$|\delta|<(2+2\sqrt{2})^{-1}$
and this implies
$|\delta|^{-1}(1+|\delta|)<(2|\delta|)^{-2}.$
Combining this inequality with \eqref{tau*}, we obtain
\begin{equation*}\begin{split}\exp\left(\frac{2C_\delta}{\bar\Lambda(\omega)\nu}\right)^{\frac{1}{p}}
&=\exp\left(\frac{2C_2}{\bar\Lambda(\omega)\nu}\ln\frac{1+|\delta|}{|\delta|}\right)^{\frac{1}{p}}
\leq \exp\left(\frac{4C_2}{\bar\Lambda(\omega)\nu}\ln\frac{1}{2|\delta|}\right)^{\frac{1}{p}}\leq
\frac{1}{\sqrt{2|\delta|}}.\end{split}\end{equation*}
By \eqref{delta1}, this choice of $\delta$ yields $|\delta|<[4(3+2\sqrt{2})]^{-1}$ and there holds $(1-\sqrt{2|\delta|})(1+2|\delta|)\leq 1-\sqrt{|\delta|}$. Then we have
\begin{equation*}\sigma'(1+2|\delta|)<\left(1-\exp\left(\frac{2C_\delta}{\bar\Lambda(\omega)\nu}\right)^{-\frac{1}{p}}
\right)(1+2|\delta|)\leq 1-\sqrt{|\delta|}.\end{equation*}
This implies the claimed estimate \eqref{claim4}.
We now proceed to estimate the right-hand side of \eqref{t0t*}. Using a change of variable $\tau'=|\delta|^{-i}\tau$, we obtain
\begin{equation*}\begin{split}\int_{K_4}&\zeta^p(\cdot,t_*) \phi_{-|\delta|^i}(\bar u)(\cdot,t_*)dx\\&=\int_{K_4}\zeta^p(\cdot,t_*)
\int_0^{(\bar u+|\delta|^i)_+}\frac{d\tau}{((1-\delta)|\delta|^i-\tau+V)^{p-1}}dx\\&
\leq \int_0^1\frac{|\delta|^{(2-p)i}}{(1+|\delta|-\tau)^{p-1}}
\left(\int_{K_4\cap [\bar u+|\delta|^i)_+>\tau |\delta|^i]}\zeta^p(\cdot,t_*)dx\right)d\tau.
\end{split}\end{equation*}
and this yields
\begin{equation*}\begin{split}\int_{K_4}&\zeta^p(\cdot,t_0) \phi_{-|\delta|^i}(\bar u)(\cdot,t_0)dx\\&\leq
\int_0^{\tau_*}\frac{|\delta|^{(2-p)i}}{(1+|\delta|-\tau)^{p-1}}Y_id\tau+\frac{C_\delta}{\bar\Lambda(\omega)}
\int_{\tau_*}^1\frac{|\delta|^{(2-p)i}}{(1+|\delta|-\tau)^{p-1}}
\left(\ln\frac
{1+2|\delta|}{1+2|\delta|-\tau}\right)^{-p}d\tau.\end{split}\end{equation*}
Taking into account that $Y_i>\nu$, we obtain
\begin{equation*}\begin{split}\int_{K_4}\zeta^p(\cdot,t_0) \phi_{-|\delta|^i}(\bar u)(\cdot,t_0)&dx\leq
|\delta|^{(2-p)i}Y_iG(Y_i,\delta)\end{split}\end{equation*}
where
\begin{equation*}G(Y_i,\delta)=\int_0^1\frac{d\tau}{(1+|\delta|-\tau)^{p-1}}-\int_{\sigma(1+2|\delta|)}^1
\left(1-\frac{C_\delta}{\nu\bar\Lambda(\omega)}\left(\ln\frac
{1+2|\delta|}{1+2|\delta|-\tau}\right)^{-p}\right)\frac{d\tau}{(1+|\delta|-\tau)^{p-1}}.\end{equation*}
Moreover, we rewrite the above estimate as
\begin{equation}\begin{split}\label{Yi}\int_{K_4}\zeta^p(\cdot,t_0) \phi_{-|\delta|^i}(\bar u)(\cdot,t_0)&dx\leq
Y_i(1-f(\delta))\int_0^{1-|\delta|}\frac{|\delta|^{(2-p)i}}{(1+|\delta|-\tau)^{p-1}}d\tau
\end{split}\end{equation}
where the function $f(\delta)$ satisfies
\begin{equation*}\begin{split}f(\delta)&\int_0^{1-|\delta|}\frac{d\tau}{(1+|\delta|-\tau)^{p-1}}
\\&=\int_{\sigma(1+2|\delta|)}^1
\left(1-\frac{C_\delta}{\nu\bar\Lambda(\omega)}\left(\ln\frac
{1+2|\delta|}{1+2|\delta|-\tau}\right)^{-p}\right)\frac{d\tau}{(1+|\delta|-\tau)^{p-1}}
-\int_{1-|\delta|}^{1}\frac{d\tau}{(1+|\delta|-\tau)^{p-1}}.\end{split}\end{equation*}
To estimate below the integral on the left-hand side of \eqref{t0t*}, take into account
the domain of integration $K_4\cap [\bar u>-|\delta|^{i+1}]$.
Recalling that $V=|\delta|^{i+\frac{p}{p-1}}$, we deduce
\begin{equation*}\begin{split}((1-\delta)|\delta|^i-\tau+V)^{p-1}\leq
((1-\delta)|\delta|^i-\tau)^{p-1}+|\delta||\delta|^{(i+1)(p-1)}\leq (1-\delta)((1-\delta)|\delta|^i-\tau)^{p-1},\end{split}\end{equation*}
for any $\tau<|\delta|^i$.
Then we find that
\begin{equation*}\begin{split}\int_{K_4}&\zeta^p(\cdot,t_0) \phi_{-|\delta|^i}(\bar u)(\cdot,t_0)dx
\\&=\int_{K_4}\zeta^p(\cdot,t_0)
\int_0^{(\bar u+|\delta|^i)_+}\frac{d\tau}{((1-\delta)|\delta|^i-\tau+V)^{p-1}}dx
\\&\geq \int_{K_4\cap [\bar u>-|\delta|^i]}\zeta^p(\cdot,t_0)
\int_0^{(\bar u+|\delta|^i)_+}\frac{d\tau }{(1-\delta)((1-\delta)|\delta|^i-\tau)^{p-1}}dx
\\&\geq \int_{K_4\cap [\bar u>-|\delta|^{i+1}]}\zeta^p(\cdot,t_0)
\int_0^{|\delta|^i-|\delta|^{i+1}}\frac{d\tau }{(1-\delta)((1-\delta)|\delta|^i-\tau)^{p-1}}dx.
\end{split}\end{equation*}
Applying a change of variable $\tau'=|\delta|^{-i}\tau$, we obtain
\begin{equation*}\begin{split}\int_{K_4}\zeta^p(\cdot,t_0) \phi_{-|\delta|^i}(\bar u)(\cdot,t_0)&dx\geq
(1-\delta)^{-1}(Y_{i+1}-\epsilon)\int_0^{1-|\delta|}\frac{|\delta|^{(2-p)i}}{(1+|\delta|-\tau)^{p-1}}d\tau.
\end{split}\end{equation*}
Combining this estimate with \eqref{Yi}, we conclude that
\begin{equation}\label{Yi+1}Y_{i+1}\leq (1-f(\delta))(1-\delta)^{-1}Y_i+\epsilon.\end{equation}
Finally, we need to show that $f(\delta)>\delta^2$. The strategy of proof is exactly the same as in \cite[Page 378-379]{AZ} and
we include the proof here for the sake of completeness.
Since $\sigma'>\sigma$ and
\begin{equation*}\frac{C_\delta}{\nu \bar \Lambda(\omega)}=\frac{1}{2}\left(\ln\frac{1}{1-\sigma'}\right)^p.\end{equation*}
It follows that
\begin{equation*}\frac{C_\delta}{\nu \bar \Lambda(\omega)}\left(\ln\frac{1+2|\delta|}{1+2|\delta|-\tau}\right)^{-p}\leq\frac{1}{2}\quad\text{for\ \ all}\quad\tau\in(\sigma'(1-2\delta),1).
\end{equation*}
Then we arrive at
\begin{equation*}f(\delta)\int_0^{1-|\delta|}\frac{d\tau}{(1+|\delta|-\tau)^{p-1}}
\geq \frac{1}{2}\int_{\sigma'(1+2|\delta|)}^1\frac{d\tau}{(1+|\delta|-\tau)^{p-1}}-\int_{1-|\delta|}^1
\frac{d\tau}{(1+|\delta|-\tau)^{p-1}}.\end{equation*}
Taking into account that $\sigma'(1+2|\delta|)<1-\sqrt{|\delta|}$, we obtain
\begin{equation*}\begin{split}f(\delta)&>\frac{(|\delta|+\sqrt{|\delta|})^{2-p}-|\delta|^{2-p}-2((2|\delta|)^{2-p}-|\delta|^{2-p})}
{2((1+|\delta|)^{2-p}-(2|\delta|)^{2-p})}
\\&>\frac{(|\delta|(1+|\delta|))^{\frac{2-p}{2}}-|\delta|^{2-p}-2((2|\delta|)^{2-p}-|\delta|^{2-p})}
{2((1+|\delta|)^{2-p}-(2|\delta|)^{2-p})}.\end{split}\end{equation*}
By \eqref{delta1}, this choice of $\delta$ yields $|\delta|<e^{-64\ln2}$ and therefore
$$|\delta|\leq \frac{\delta'}{1-\delta'},\qquad \text{where}\qquad\delta'=\inf_{z\in(1,2)}(1+32(2-z)\ln2)^{\frac{2}{z-2}}.$$
From this estimate, we find that
\begin{equation}\label{relationdelta}(|\delta|(1+|\delta|))^{\frac{2-p}{2}}-|\delta|^{2-p}\geq 8
((2|\delta|)^{2-p}-|\delta|^{2-p})\geq 8(2-p)|\delta|^{2-p}\ln2,\end{equation}
since $2^{2-p}-1\geq (2-p)\ln2$. Taking into account that $\ln(1+|\delta|^{-1})\leq |\delta|^{-1}$, we obtain
\begin{equation}\begin{split}\label{relationdelta1}
((1+|\delta|)^{2-p}&-(2|\delta|)^{2-p})=|\delta|^{2-p}\int_{2-p}^{(2-p)\ln(1+|\delta|^{-1})}e^\xi d\xi\\&\leq
(2-p)|\delta|^{2-p}(1+|\delta|^{-1})\ln(1+|\delta|^{-1})\leq 2(2-p)|\delta|^{-p},
\end{split}\end{equation}
From \eqref{relationdelta} and \eqref{relationdelta1}, we obtain
\begin{equation*}\begin{split}f(\delta)&
>\frac{3}{8}\frac{(|\delta|(1+|\delta|))^{\frac{2-p}{2}}-|\delta|^{2-p}}
{(1+|\delta|)^{2-p}-(2|\delta|)^{2-p}}\geq \frac{3\ln2}{2}|\delta|^2>\delta^2.\end{split}\end{equation*}
Therefore, we conclude from \eqref{Yi+1} that $Y_{i+1}\leq (1-|\delta|)Y_i$ and the claim \eqref{claim3} follows.

From \eqref{claim3}, by iteration
\begin{equation*}Y_{j+1}\leq (1-|\delta|)^jY_1\leq (1-|\delta|)^j|K_4|.\end{equation*}
Having fixed $\nu\in(0,1)$, one can choose
\begin{equation*}j= 5+\left[\frac{\ln\dfrac{4^N}{\nu}}{\ln\dfrac{1}{1-|\delta|}}\right],\end{equation*}
where $[\cdot]$ denotes the integer portion of the number. For such a choice, $Y_{j+1}\leq\nu$ and hence
\begin{equation*}\big|\{x\in K_2: \bar u(x,t)>-|\delta|^{j+1}\}\big|\leq \nu|K_2|\quad\text{for\ \ all}\quad t\in[-2^p,0].\end{equation*}
Moreover, by \eqref{delta1}, there exists $\gamma_3>2^{10}$, depending only upon the data $\{N,C_0,C_1\}$, such that
\begin{equation*}\exp\left\{\left(\frac{\gamma_4}{\nu\bar\Lambda(\omega)}\right)^2\right\}
\geq\left(\frac{1}{|\delta|}\right)^2\ln\left(\dfrac{4^N}{\nu}\right).\end{equation*}
We choose
\begin{equation}\label{tilde m}
\tilde m=\tilde m(\nu)=\exp\left\{\left(\frac{\gamma_4}{\nu\bar\Lambda(\omega)}\right)^2\right\}.\end{equation}
Taking into account that
\begin{equation*}|\delta|\leq\ln\frac{1}{1-|\delta|}\leq2|\delta|\qquad\text{and}\qquad \ln\frac{1}{|\delta|}\leq
\frac{1}{\sqrt{|\delta|}},\end{equation*}
the choice \eqref{tilde m} guarantees that $2^{-\tilde m}\leq|\delta|^{j+1}$. For such a choice of $\tilde m$, we obtain the desired estimate \eqref{pro6.4formula}. Finally, if $Y_{i_0}\leq\nu$ for some $i_0\in\{1,2,\cdots,j\}$, then
the estimate \eqref{pro6.4formula} holds as well, for the same choice of $\tilde m$ as in \eqref{tilde m}. This concludes the
proof of the lemma.
\end{proof}
From \eqref{structure}, \eqref{lambda bar} and \eqref{tilde m}, we remark that $\tilde m$ can be chosen independent of $m_2$.
Transforming back to the original function $u$ and original variables $(x,t)$, we obtain an estimate for the measure of the level sets
\begin{equation}\label{transform back}\big|\{x\in \bar x+K_{4c_2R}:u(x,t)>\mu_+-\frac{\omega}{2^{m_2+\tilde n+\tilde m-1}}\}\big|
<\nu |K_{4c_2R}|,\end{equation}
for all $\tilde t-2^{-p}(\tilde t-t_0)<t<\tilde t$.
With the help of this estimate we can now prove a DeGiorgi-type lemma.
\begin{lemma}\label{DeGiorgi4}Let $\tilde n>1$ be the constant chosen according to \eqref{tilde n}. Then there exist a constant $\tilde m
>1$ depending only upon the data and $\nu_0$, and a time level $t_\omega\in (t_0,\tilde t)$ such that
\begin{equation}\label{space expanding}u(x,t)<\mu_+-\frac{\omega}{2^{m_2+\tilde n+\tilde m}}\qquad\text{a.e.}\quad\text{in}\quad(\bar x,\tilde t)+ Q\left( \frac{\tilde t-t_\omega}{2^p},2c_2R\right).\end{equation}
\end{lemma}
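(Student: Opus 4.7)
The plan is to upgrade the measure-theoretic estimate \eqref{transform back} to a pointwise bound via a forward-in-time De Giorgi iteration on a shrinking family of sub-cylinders. First I would fix a smallness threshold $\nu_\ast\in(0,1)$, depending only on the data, that will emerge from the geometric convergence lemma at the end, and apply Lemma~\ref{dimension less lemma} with $\nu=\nu_\ast$ to determine $\tilde m$ and to secure \eqref{transform back} for all $t\in (\tilde t-\theta,\tilde t)$, where $\theta:=2^{-p}(\tilde t-t_0)$. The working macro-cylinder will be $[\bar x+K_{4c_2R}]\times (\tilde t-\theta,\tilde t)$ and the target level is $\mu_+-\omega/2^{m_2+\tilde n+\tilde m}$.

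Next I would introduce the standard De Giorgi sequences
\begin{equation*}
k_n=\mu_+-\frac{\omega}{2^{m_2+\tilde n+\tilde m}}-\frac{\omega}{2^{m_2+\tilde n+\tilde m+n+1}},\qquad r_n=2c_2R+\frac{2c_2R}{2^n},\qquad \tau_n=\frac{\theta}{2}+\frac{\theta}{2^{n+1}},
\end{equation*}
set $Q_n=[\bar x+K_{r_n}]\times (\tilde t-\tau_n,\tilde t)$ and fix the time level $t_\omega:=\tilde t-\theta/2$, so that the limit cylinder $Q_\infty$ engulfs $(\bar x,\tilde t)+Q((\tilde t-t_\omega)/2^p,2c_2R)$. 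I would take cutoff functions $\zeta_n(x,t)$ equal to one on $Q_{n+1}$, vanishing on $\partial_P Q_n$, with $|D\zeta_n|\leq 2^{n+2}/(c_2R)$ and $0\leq\partial_t\zeta_n\leq 2^{n+2}/\theta$. Crucially, since $k_n>\mu_+-\omega/2^{m_2+\tilde n+\tilde m-1}>\mu_+-\omega/4\geq 0$ (by the standing assumption $4\mu_+>\omega$ recorded at the beginning of \S6), the condition $(u-k_n)_+>0$ forces $u>0$, so $U([\bar x+K_{r_n}],\tilde t-\tau_n,\tilde t,(u-k_n)_+\zeta_n^p)$ vanishes identically, and the Caccioppoli inequality \eqref{Caccioppoli} reduces to
\begin{equation*}
\esssup_{\tilde t-\tau_n<t<\tilde t}\int_{[\bar x+K_{r_n}]}(u-k_n)_+^2\zeta_n^p\,dx+\iint_{Q_n}|D((u-k_n)_+\zeta_n)|^p\,dx\,dt\leq \gamma_1\iint_{Q_n}\bigl[(u-k_n)_+^p|D\zeta_n|^p+(u-k_n)_+^2|\partial_t\zeta_n|\bigr]\,dx\,dt,
\end{equation*}
with no initial-data contribution since $\zeta_n$ vanishes at $t=\tilde t-\tau_n$.

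Using the bound $(u-k_n)_+\leq \omega/2^{m_2+\tilde n+\tilde m-1}<1$ together with the balancing identity \eqref{dd*} exactly as in the proof of Lemma~\ref{DeGiorgi}, both right-hand side terms would collapse to a common scale of order $2^{cn}\omega^2 2^{-2(m_2+\tilde n+\tilde m)}A_n/(dR^p)$, where $A_n=|Q_n\cap\{(u-k_n)_+>0\}|$. Feeding this into the parabolic Sobolev inequality \eqref{Sobolevf} and estimating below as in \eqref{S2} by $(k_n-k_{n+1})^{p(N+2)/N}A_{n+1}\gtrsim (\omega/2^{m_2+\tilde n+\tilde m+n})^{p(N+2)/N}A_{n+1}$, I expect a recursion of the form
\begin{equation*}
Y_{n+1}\leq C_\ast\, b^n\, Y_n^{1+p/N},\qquad Y_n:=\frac{A_n}{|Q_n|},
\end{equation*}
with $C_\ast,b$ depending only on the data and on the already-fixed parameters $\{L_1,L_2,\nu_0,\tilde n,\tilde m\}$. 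The initial condition $Y_0\leq \nu_\ast$ follows by integrating \eqref{transform back} over $(\tilde t-\theta,\tilde t)$. For $\nu_\ast$ chosen small enough, the fast geometric convergence lemma \cite[Chap.~I, Lemma~4.1]{Di93} gives $Y_n\to 0$, which is precisely \eqref{space expanding}.

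The main obstacle I anticipate is keeping every constant uniformly bounded as $p\nearrow 2$, so that $\nu_\ast$, and hence the admissible $\tilde m$ extracted from Lemma~\ref{dimension less lemma}, can indeed be chosen independent of $p$ and of $m_2$. The identity \eqref{dd*} is designed precisely so that the two right-hand side terms in the energy inequality balance at the same power of $\omega$, mirroring the computation already executed in Lemma~\ref{DeGiorgi}; the relation $\theta\sim dR^p$, up to factors depending only on $\nu_0$ and $\tilde n$, ensures that the scaling of the recursion stays clean. Combined with the $m_2$-independence already built into Lemmas~\ref{2nd expand time} and~\ref{dimension less lemma}, this yields a $\tilde m$ with the required uniformity, which is what is needed for the recursive construction of oscillation decay in the sequel.
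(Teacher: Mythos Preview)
Your overall strategy --- a forward-in-time De Giorgi iteration with the $U$-term discarded because $k_n>\mu_+-\omega/4\geq0$ --- is the right one, but the argument has a circularity in the choice of $\nu_\ast$ that does not close as written.

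The trouble is the time scale of your iteration cylinders. You take $\theta=2^{-p}(\tilde t-t_0)$, but $\tilde t-t_0$ is the intrinsic time associated with the level $\omega/2^{m_2+\tilde n}$, \emph{not} with the much smaller level $\omega/2^{m_2+\tilde n+\tilde m}$ at which you truncate. Two of your scaling claims are in fact incorrect: using \eqref{dd*} one finds the spatial term $(u-k_n)_+^p|D\zeta_n|^p$ is of order $\omega^2\,2^{-p(m_2+\tilde n+\tilde m)}/(dR^p)$, not $\omega^2\,2^{-2(m_2+\tilde n+\tilde m)}/(dR^p)$; and a direct computation gives $\theta/(dR^p)\sim \nu_0\,2^{-\tilde n p(2-p)}\,2^{-m_2(2-p)}$, so $\theta$ is \emph{not} comparable to $dR^p$ modulo factors in $\nu_0,\tilde n$ alone. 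When you carry the iteration through and normalize by $|Q_n|$ (which in your setup carries no $\tilde m$), the recursion constant comes out proportional to $2^{\tilde m\,p(2-p)/N}$. The geometric-convergence threshold is then $\nu_\ast\lesssim 2^{-\tilde m(2-p)}$, while by \eqref{tilde m} one has $\tilde m=\tilde m(\nu_\ast)\sim\exp\bigl(c/\nu_\ast^2\bigr)$. Making $\nu_\ast$ smaller drives $\tilde m$ up super-exponentially and the threshold down faster than $\nu_\ast$ itself; the loop does not close by ``choosing $\nu_\ast$ small enough''.

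The paper breaks this circularity by picking the time scale of the iteration to be intrinsic to the truncation level: it sets
\[
\tilde t-t_\omega=\frac{\nu_0}{4}\Bigl(\frac{1}{2^{\tilde n+\tilde m}}\Bigr)^{2-p}\Bigl(\frac{\omega}{2^{m_2}}\Bigr)^{(1-p)(2-p)}R^p,
\]
which is, up to data-constants, $(\omega/2^{m_2+\tilde n+\tilde m})^{2-p}(c_2R)^p$. With this choice the space and time contributions in \eqref{Caccioppoli} both land at the scale $(\omega/2^{m_2})^{(3-p)p}(2^{\tilde n+\tilde m})^{-p}R^{-p}$, and the factor $2^{\tilde m\,p(2-p)/N}$ arising in the Sobolev step is cancelled exactly by $|Q_n|^{p/N}\sim(2^{-\tilde m})^{(2-p)p/N}$. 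The resulting recursion constant depends only on the data and $\nu_0$, so one can fix the threshold $\nu_2$ as in \eqref{nu2} \emph{before} defining $\tilde m=\tilde m(\nu_2)$ via \eqref{tilde m}, with no feedback. Since $\tilde m>(p-1)\tilde n$ one still has $t_\omega\in(t_0,\tilde t)$, so the measure estimate \eqref{transform back} remains available on the (smaller) iteration interval.
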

\begin{proof} Without loss of generality, we may assume $(\bar x,\tilde t)=(0,0)$. For $\nu\in(0,1)$ to be determined later we take $\tilde m
=\tilde m(\nu)>1$ according to \eqref{tilde m}. For $n=0,1,2,\cdots$, set
\begin{equation*}R_n=2c_2R+\frac{2c_2R}{2^{n}},\quad k_n=\mu_+-\frac{\omega}{2^{m_2+\tilde n+\tilde m}}
-\frac{\omega}{2^{m_2+\tilde n+\tilde m+n}}\quad\text{and}\quad
Q_n= Q\left(\frac{\tilde t-t_\omega}{2^{p}}+\frac{\tilde t-t_\omega}{2^{p+pn}},R_n\right)\end{equation*}
where the time level $t_*$ is taken as
\begin{equation*}t_\omega=\tilde t-\left(\frac{\nu_0}{4}\right)\left(\frac{1}{2^{\tilde n+\tilde m}}\right)^{2-p}\left(\frac{\omega}{2^{m_2}}\right)^{(p-1)(p-2)}
R^p.\end{equation*}
From \eqref{tilde n}, \eqref{structure}, \eqref{lambda bar} and \eqref{tilde m}, we observe that
$$\tilde m(\nu)\geq \tilde m(1)>\tilde n>(p-1)\tilde n$$
for any $\nu\in(0,1)$. So, we conclude from \eqref{level tilde t} that $t_\omega\in (t_0,\tilde t)$.
Take piecewise smooth cutoff functions $\zeta_n$ in $Q_n$, such that  $0\leq\zeta_n\leq1$, $\zeta_n\equiv1$ in $Q_{n+1}$, $\zeta_n\equiv0$ on $\partial_PQ_n$,
$|D\zeta_n|\leq 2^n/(c_2R)$ and $0<\partial_t\zeta_n\leq 2^{p+1}2^{pn}/(\tilde t-t_\omega)$.
Write down the energy estimates \eqref{Caccioppoli} for the
truncated functions $(u-k_n)_+\zeta_n^p$ over the cylinders $Q_n$.
Taking into account that
$$U(K_{R_n},-2^{-p}(\tilde t-t_\omega)-2^{-p-pn}(\tilde t-t_\omega),0,(u-k_n)_{+}\zeta_n^p)=0,$$
we deduce
\begin{equation*}\begin{split}\esssup_{-2^{-p}(\tilde
 t-t_\omega)-2^{-p-pn}(\tilde t-t_\omega)<t<0}&\int_{K_{R_n}\times\{t\}}(u-k_n)_{+}^2\zeta_n^p dx
+\iint_{Q_n}|D(u-k_n)_{+}\zeta_n|^p dxdt\\
&\leq 2^{2n+6}\gamma_1\nu_0^{-1}R^{-p}\left(\frac{\omega}{2^{m_2}}\right)^{(3-p)p}\left(\frac{1}{2^{\tilde m+\tilde n}}\right)^p
\iint_{Q_n}\chi_{[(u-k_n)_+>0]}dxdt.
\end{split}\end{equation*}
At this point, we set
\begin{equation*}A_n=\iint_{Q_n}\chi_{[(u-k_n)_+>0]}dxdt\qquad\text{and}\qquad Y_n=\frac{A_n}{|Q_n|}.\end{equation*}
Applying parabolic Sobolev's inequality \eqref{Sobolevf}, we obtain
\begin{equation*}\begin{split}
\iint_{Q_n}&|(u-k_n)_{+}|^{p(1+\frac{2}{N})}\zeta_n^{p(1+\frac{2}{N})} dxdt\\&\leq \gamma
\left(\esssup_{-2^{-p}(\tilde
 t-t_\omega)-2^{-p-pn}(\tilde t-t_\omega)<t<0}\int_{K_{R_n}\times\{t\}}(u-k_n)_{+}^2\zeta_n^2 dx\right)^{\frac{p}{N}}
\iint_{Q_n}|D(u-k_n)_{+}\zeta_n|^p dxdt\\&
\leq \gamma_5\nu_0^{-(1+\frac{p}{N})}2^{2n(1+\frac{p}{N})}R^{-p(1+\frac{p}{N})}\left(\frac{\omega}{2^{m_2}}
\right)^{(3-p)p(1+\frac{p}{N})}\left(\frac{1}{2^{\tilde m+\tilde n}}\right)^{p(1+\frac{p}{N})}
A_n^{1+\frac{p}{N}},\end{split}\end{equation*}
for a constant $\gamma_5$ depending only upon the data.
The integral on the left-hand side is estimated below by
\begin{equation*}\begin{split}\iint_{Q_n}&|(\bar u-k_n)_{+}|^{p(1+\frac{2}{N})}\zeta_n^{p(1+\frac{2}{N})} dxdt
\geq (k_{n+1}-k_n)^{p\frac{N+2}{N}}A_{n+1}
\geq \left(\frac{\omega}{2^{m_2+\tilde n+\tilde m+n+1}}\right)^{p\frac{N+2}{N}}A_{n+1} .\end{split}\end{equation*}
Combining the estimates above and keeping in mind
\begin{equation*}|Q_n|\leq 4^NL_2^N\nu_0\left(\frac{\omega}{2^{m_2}}\right)^{(p-2)N+(p-1)(p-2)}\left(\frac{1}{2^{\tilde n+\tilde m}}\right)^{2-p}R^{N+p},
\end{equation*}
we infer that
\begin{equation*}Y_{n+1}\leq \gamma_6\nu_0^{-1}2^{4n(1+\frac{2}{N})}Y_n^{1+\frac{p}{N}}
\leq \gamma_6\nu_0^{-1}2^{4n(1+\frac{2}{N})}Y_n^{1+\frac{2}{N}},\end{equation*}
for a constant $\gamma_6$ depending only upon the data. At this point, we set
\begin{equation}\label{nu2}\nu_2=2^{-N(N+2)}\gamma_6^{-\frac{N}{2}}\nu_0^{\frac{N}{2}}.\end{equation}
Choose $\nu=\nu_2$ in Lemma \ref{dimension less lemma}, and hence $\tilde m=\tilde m(\nu_2)$, from this and \eqref{tilde m}. Moreover, we conclude from \eqref{transform back} that
$Y_0\leq\nu_2.$
By the lemma on fast geometric convergence of sequences, we infer that $Y_n\to0$, as $n\to\infty$, which proves the lemma.
\end{proof}
\subsection{Expansion of positivity in time variable}\label{m2}
The aim of this subsection is to establish a DeGiorgi-type result similar to that of Lemma \ref{DeGiorgi4}.
To start with, we determine the constant $m_2$ in terms of the data and $\omega$.
Let $\tilde n$ and $\nu_2$ be the constants determined by \eqref{tilde n} and \eqref{nu2}, respectively. In \eqref{tilde m}, take $\nu=\nu_2$
and choose $\tilde m=\tilde m(\nu_2)$.
At this point, we
choose $m_2>2$ be such that
\begin{equation}\label{m2f}m_2p=m_2+\tilde n+\tilde m+10\qquad \text{i.\ e.,}\qquad m_2=\frac{\tilde n+\tilde m+10}{p-1},\end{equation}
and hence $m_1=m_2p/(p-1)$. This determines the precise formulation of the intrinsic parabolic cylinders. Next, we consider some
geometric properties of these cylinders.

From now on, we assume that $p>\frac{1+\sqrt{5}}{2}$ and set $\kappa=\frac{1}{2}(p-1-\frac{1}{p})>0$. Moreover, we will need the following
assumption:
\begin{equation}\label{R} R\leq\min\left\{\left(\frac{1}{8L_2}\right)^{\frac{1}{\kappa}},\left(\frac{1}{16}\right)^{\frac{1}{(p-1)^2}}\right\}.\end{equation}
In the case $2^{-m_2}\omega\geq R$. Keeping in mind $L_1=2$, we conclude from \eqref{R} that $Q(8c_1R^p,8c_2R)\subset \overline Q$.
While in the case $2^{-m_2}\omega\leq R$ we conclude from \eqref{nu0}, \eqref{tilde n}, \eqref{structure}, \eqref{lambda bar},
\eqref{tilde m}, \eqref{nu2} and \eqref{m2f} that
\begin{equation*}\exp\left(-\exp\left(\exp \frac{\gamma_8}{\omega^{\gamma_7}}\right)\right)\leq R\end{equation*}
for some constants $\gamma_7$ and $\gamma_8$ depending only upon the data. From this inequality, we obtain a decay estimate for $\omega$ as follows
\begin{equation}\omega\leq \left(\frac{\gamma_8}{\ln\ln\ln\frac{1}{R}}\right)^{\frac{1}{\gamma_7}}.\end{equation}
We now turn our attention to the case $Q(8c_1R^p,8c_2R)\subset \overline Q$. In this case we have already established the
estimate \eqref{space expanding}.
Before proceeding further, let us remark that this estimate implies
\begin{equation}\label{starting}u(x,\tilde t)<\mu_+-\frac{\omega}{2^{m_2+\tilde n+\tilde m}}
\qquad\text{a.e.}\quad x\in K_{c_2R}.\end{equation}
The next lemma deals with the expansion of positivity in time, starting from $\tilde t$.
\begin{lemma}\label{3rd expand time 6.6}For any $\nu\in(0,1)$, there exist a constant
$\gamma_9$ depending only upon the data, and a time level $t^{(0)}=\tilde t+\gamma_9\nu\omega^{(1-p)(2-p)}R^p$
such that
\begin{equation}\label{expand level}\big|\{x\in K_{\frac{1}{2}c_2R}:u(x,t)>\mu_+-\frac{\omega}{2^{m_2p-1}}\}\big|
<\nu |K_{\frac{1}{2}c_2R}|\end{equation}
for any $t\in (\tilde t, t^{(0)})$.
\end{lemma}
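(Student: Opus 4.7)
The plan is to apply the logarithmic energy estimate \eqref{logarithmic} to $(u-k)_+$ on the cylinder $K_{c_2R}\times(\tilde t,t^{(0)})$ (after translating so $\bar x=0$) with the choices
\[
k=\mu_+-\frac{\omega}{2^{m_2+\tilde n+\tilde m}},\qquad H_k^+=\frac{\omega}{2^{m_2+\tilde n+\tilde m}},\qquad c=\frac{\omega}{2^{m_2p-1}}.
\]
By the relation \eqref{m2f}, $H_k^+/c=2^9$ and hence $\psi^+\le 9\ln 2$ throughout. I take a time-independent spatial cutoff $\zeta(x)$ with $\zeta\equiv 1$ on $K_{c_2R/2}$, $\zeta=0$ on $\partial K_{c_2R}$, and $|D\zeta|\le 2/(c_2R)$. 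Two features collapse most of \eqref{logarithmic}: first, \eqref{starting} gives $(u-k)_+(\cdot,\tilde t)\equiv 0$ and therefore $\psi^+(\cdot,\tilde t)\equiv 0$ on $K_{c_2R}$, killing the initial integral; second, on the support of the test function $2\psi^+(\psi^+)'\zeta^p$ one has $u>k+c>0$ (because $\mu_+\ge\omega/4\gg H_k^+$ once $m_2+\tilde n+\tilde m>2$, which is ensured by \eqref{m2f}), so $\chi_{[u\le 0]}\equiv 0$ on that support and the term $U(K_{c_2R},\tilde t,t^{(0)},2\psi^+(\psi^+)'\zeta^p)$ vanishes identically.

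What remains is the inequality
\[
\esssup_{\tilde t<t<t^{(0)}}\int_{K_{c_2R/2}}(\psi^+)^2\,dx\le \gamma_2\int_{\tilde t}^{t^{(0)}}\!\!\int_{K_{c_2R}}\psi^+|(\psi^+)'|^{2-p}|D\zeta|^p\,dxdt.
\]
I insert the pointwise bounds $\psi^+\le 9\ln 2$, $|(\psi^+)'|^{2-p}\le c^{p-2}=\omega^{p-2}\,2^{(m_2p-1)(2-p)}$, and $|D\zeta|^p\le 2^p(L_2d_*R)^{-p}$, multiplied by the space-time volume $(t^{(0)}-\tilde t)|K_{c_2R}|=\gamma_9\nu\,\omega^{(1-p)(2-p)}R^p\cdot(2L_2d_*R)^N$. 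The exponents of $\omega$ accumulate to $(p-2)+(1-p)(2-p)+(p-2)(N-p)=-N(2-p)$, while those of $2^{m_2}$ accumulate to $m_2N(2-p)$; these are precisely the factors carried by $d_*^N$ inside $|K_{c_2R/2}|=L_2^NR^N\omega^{-N(2-p)}2^{m_2N(2-p)}$, so after dividing the right-hand side reduces to $\bar C\gamma_9\nu\,|K_{c_2R/2}|$ with $\bar C\sim \gamma_2\cdot 9\ln 2\cdot 2^{N+2p-2}L_2^{-p}$, uniformly bounded for $p\in[\tfrac32,2)$ and manifestly independent of $m_2$.

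For the lower bound, on $\{x\in K_{c_2R/2}:u>\mu_+-\omega/2^{m_2p-1}\}$ one has $(u-k)_+>H_k^+(1-2^{-9})$, so $H_k^+-(u-k)_++c<H_k^+/2^9+c=H_k^+/2^8$ and consequently $\psi^+>8\ln 2$. Combining with the preceding bound yields
\[
(8\ln 2)^2\big|\{x\in K_{c_2R/2}:u(x,t)>\mu_+-\omega/2^{m_2p-1}\}\big|\le \bar C\gamma_9\nu\,|K_{c_2R/2}|
\]
for a.e.\ $t\in(\tilde t,t^{(0)})$, and the choice $\gamma_9=(8\ln 2)^2/\bar C$ closes the argument. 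The principal obstacle is the exponent bookkeeping in paragraph two: one must verify that the cancellations of $\omega$ and $2^{m_2}$ really leave a constant independent of $m_2$—which is exactly what the choice \eqref{m2f} of $m_2$ was engineered to guarantee—and that the surviving dependence on $p$ stays uniformly bounded as $p\to 2$, which is indispensable for the stability claim of Theorem \ref{main theorem}.
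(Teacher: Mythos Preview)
Your proof is correct and follows essentially the same approach as the paper's: apply the logarithmic estimate \eqref{logarithmic} with a time-independent cutoff, use \eqref{starting} to kill the initial term, observe $U=0$ because $k>\mu_+-\omega/4\ge 0$, bound the right-hand side, and estimate the left-hand side from below on the super-level set. The only cosmetic difference is your choice of truncation level: you take $k=\mu_+-\omega/2^{m_2+\tilde n+\tilde m}$ (so $H_k^+/c=2^9$ via \eqref{m2f}), whereas the paper takes $k=\mu_+-\omega/2^{m_2p-4}$ (so $H_k^+/c=2^3$); this merely shifts the numerical constants, and your more explicit exponent bookkeeping---confirming that the $\omega$- and $2^{m_2}$-powers recombine exactly into $|K_{c_2R/2}|$---is a welcome clarification of what the paper compresses into one line.
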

\begin{proof}Set $k=\mu_+-\frac{\omega}{2^{m_2p-4}}$ and $c=\frac{\omega}{2^{m_2p-1}}$.
We consider the logarithmic function
\begin{equation*}\begin{split}\psi^+=\ln^+\left(\frac{\frac{\omega}{2^{m_2p-4}}}{\frac{\omega}{2^{m_2p-4}}-(u-k)_++c}\right).
\end{split}\end{equation*}
Then we have $\psi^+\leq 8\ln2$ and
\begin{equation*}\begin{split}[(\psi^+)']^{2-p}\leq \left(\frac{1}{c}\right)^{2-p}
\leq \left(\frac{\omega}{2^{m_2p-1}}\right)^{p-2}.\end{split}\end{equation*}
Choose a piecewise smooth cutoff function $\zeta(x)$, defined in $K_{c_2R}$, and satisfying $0\leq \zeta\leq1$ in $K_{c_2R}$,
$\zeta\equiv 1$ in $K_{\frac{1}{2}c_2R}$ and $|D\zeta|\leq 2(c_2R)^{-1}$.
Since $k>\mu_+-\frac{\omega}{4}$, it is easy to check that $U(K_{c_2R},\tilde t,t^{(0)},2\psi^{+}\left(\psi^{+}\right)^\prime\zeta^p)=0$.
Then we obtain from \eqref{logarithmic} the logarithmic estimate
\begin{equation*}\begin{split}\esssup_{\tilde t<t<t^{(0)}}&\int_{K_{c_2R}\times\{t\}}\left(\psi^{+}\right)^2\zeta^p dx\leq
\int_{K_{c_2R}\times\{\tilde t\}}\left(\psi^{+}\right)^2\zeta^p dx+\gamma_2\int_{\tilde t}^{t^{(0)}}\int_{
K_{c_2R}}\psi^{+}|\left(\psi^{+}\right)^\prime|^{2-p}
|D\zeta|^pdxdt\end{split}\end{equation*}
From \eqref{starting}, we observe that
$\int_{K_{c_2R}\times\{\tilde t\}}\left(\psi^{+}\right)^2\zeta^p dx=0$
and there holds
\begin{equation}\begin{split}\label{FFF}\esssup_{\tilde t<t<t^{(0)}}\int_{K_{c_2R}\times\{t\}}&\left(\psi^{+}\right)^2\zeta^p dx\leq\gamma_2\int_{\tilde t}^{t^{(0)}}\int_{
K_{c_2R}}\psi^{+}|\left(\psi^{+}\right)^\prime|^{2-p}
|D\zeta|^pdxdt\\&
\leq
2^{N+5}(\ln2)\gamma_2L_2^{-1}
\omega^{(1-p)(p-2)}R^{-p} (t^{(0)}-\tilde t)|K_{\frac{1}{2}c_2R}|\leq\nu |K_{\frac{1}{2}c_2R}|,\end{split}\end{equation}
provided
\begin{equation*}\gamma_9=2^{-N-5}\gamma_2^{-1}L_2.\end{equation*}
To estimate below the integral
on the left-hand side, we consider a smaller set defined by
$$S=\big\{x\in K_{\frac{1}{2}c_2R}:u(x,t)>\mu_+-\frac{\omega}{2^{m_2p-1}}\big\}\subset K_{\frac{1}{2}c_2R}.$$
On such a set, $\zeta\equiv 1$ and $\psi^+\geq 2\ln2>1$. Combining this inequality with \eqref{FFF}, we obtain
\eqref{expand level}, which proves the lemma.
\end{proof}
With the help of this lemma we can now prove the following DeGiorgi-type result.
\begin{lemma}\label{DeGiorgi initial}There exists a constant $\nu_3\in(0,1)$ depending only upon the data
such that
\begin{equation}\label{DeGiorgi5}u(x,t)<\mu_+-\frac{\omega}{2^{m_2p}}\qquad\text{a.e.}\quad\text{in}\quad K_{\frac{1}{4}c_2R}
\times(\tilde t, t^{(0)}],\end{equation}
where $t^{(0)}=\tilde t+\gamma_9\nu_3\omega^{(1-p)(2-p)}R^p$.
\end{lemma}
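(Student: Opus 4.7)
The plan is to set up a De Giorgi iteration over a family of cylinders that shrink in space but share the fixed time interval $(\tilde t, t^{(0)}]$. Concretely, I would take $R_n = \frac{c_2 R}{4} + \frac{c_2 R}{2^{n+2}}$, $k_n = \mu_+ - \omega/2^{m_2 p} - \omega/2^{m_2 p + n}$, and $Q_n = K_{R_n} \times (\tilde t, t^{(0)}]$, together with time-independent cutoffs $\zeta_n(x)$ satisfying $\zeta_n \equiv 1$ on $K_{R_{n+1}}$, $\zeta_n \equiv 0$ on $\partial K_{R_n}$, and $|D\zeta_n| \leq 2^{n+3}/(c_2 R)$. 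Proving $Y_n := A_n/|Q_n| \to 0$ for $A_n = |\{u > k_n\} \cap Q_n|$ would give \eqref{DeGiorgi5}, since $k_\infty = \mu_+ - \omega/2^{m_2 p}$ and $R_\infty = c_2 R/4$.

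Three structural simplifications clean up the Caccioppoli estimate \eqref{Caccioppoli} on $Q_n$. First, the hypotheses $\mu_+ \geq |\mu_-|$ and $\mu_+ - \mu_- \geq \omega/2$ force $\mu_+ \geq \omega/4$, and since $m_2 p > 3$ every $k_n$ exceeds $\mu_+ - \omega/4 > 0$; thus $\chi_{[u \leq 0]}(u-k_n)_+ \equiv 0$, which makes the $U$-term in \eqref{Caccioppoli} vanish identically. Second, \eqref{starting} combined with $m_2 p - 1 > m_2 + \tilde n + \tilde m$ (an immediate consequence of \eqref{m2f}) yields $(u-k_n)_+(\cdot,\tilde t) \equiv 0$ on $K_{c_2 R}$, killing the initial-data term. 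Third, $\partial_t \zeta_n = 0$ disposes of the remaining time-derivative term. Since $(u-k_n)_+ \leq \omega/2^{m_2 p-1}$, what remains is
\begin{equation*}
\esssup_{\tilde t<t<t^{(0)}}\!\int_{K_{R_n}}\!(u-k_n)_+^2 \zeta_n^p\,dx + \iint_{Q_n}\!\bigl|D\bigl((u-k_n)_+\zeta_n\bigr)\bigr|^p dxdt \leq \gamma_1 \Bigl(\frac{\omega}{2^{m_2 p-1}}\Bigr)^{\!p}\frac{2^{p(n+3)}}{(c_2 R)^p}A_n.
\end{equation*}

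Applying the Sobolev inequality \eqref{Sobolevf} to $(u-k_n)_+\zeta_n$ and estimating the result from below by $(k_{n+1}-k_n)^{p(N+2)/N}A_{n+1}=(\omega/2^{m_2 p+n+1})^{p(N+2)/N}A_{n+1}$ produces a recursion of the form $Y_{n+1} \leq \gamma\, b^{n} C_* Y_n^{1+p/N}$, where $\gamma,b$ depend only on data and $C_*$ collects the $\omega$- and $m_2$-dependent factors coming from $(c_2 R)^{-p(1+p/N)}$, $|Q_n|^{p/N}$, the Caccioppoli factor $(\omega/2^{m_2 p-1})^{p(1+p/N)}$, and the level-gap factor $(2^{m_2 p+n+1}/\omega)^{p(1+2/N)}$. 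The crucial calculation is that once $c_2 = L_2(\omega/2^{m_2})^{p-2}$ and $t^{(0)}-\tilde t = \gamma_9 \nu_3 \omega^{(1-p)(2-p)}R^p$ are substituted, the cumulative exponents of $\omega$ and of $2^{m_2}$ both vanish, leaving $C_* = L_2^{-p^2/N}(\gamma_9 \nu_3)^{p/N}$. This cancellation is not accidental: it rests on the balance $d_*^p/d = \omega^{p-2}$ from \eqref{dd*} enforced by the coupling $m_1 = pm_2/(p-1)$, and verifying it rigorously is the main technical obstacle of the proof.

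Finally, Lemma \ref{3rd expand time 6.6} applied with its parameter $\nu$ set equal to $\nu_3$ gives, after integration in time, $Y_0 \leq \nu_3$, because the choice $k_0 = \mu_+ - \omega/2^{m_2 p-1}$ ensures $\{u > k_0\} \subset \{u > \mu_+ - \omega/2^{m_2 p-1}\}$. Choosing $\nu_3$ in terms of data so that $\gamma_9 \nu_3^2 \leq \gamma^{-N/p} L_2^{p}\, b^{-N^2/p^2}$ places $Y_0$ below the threshold required by the fast geometric convergence lemma \cite[Chapter I, Lemma 4.1]{Di93}, whence $Y_n \to 0$ and \eqref{DeGiorgi5} follows.
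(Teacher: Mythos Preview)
Your proposal is correct and follows essentially the same approach as the paper: a De Giorgi iteration on cylinders shrinking in space with fixed time interval $(\tilde t,t^{(0)}]$, using the vanishing of the $U$-term (since all levels exceed $\mu_+-\omega/4>0$), the vanishing initial data from \eqref{starting} and \eqref{m2f}, and time-independent cutoffs. The paper uses $R_n=c_2R/4+c_2R/4^{n+1}$ rather than your $c_2R/4+c_2R/2^{n+2}$, and handles the final choice of $\nu_3$ by first bounding $\nu^{p/N}Y_n^{1+p/N}\le Y_n^{1+2/N}$ before invoking fast geometric convergence, but these are cosmetic differences; your cancellation of the $\omega$- and $2^{m_2}$-exponents via \eqref{dd*} is exactly the mechanism the paper relies on.
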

\begin{proof}Let $t^{(0)}=\tilde t+\gamma_9\nu\omega^{(1-p)(2-p)}R^p$ where $\nu\in(0,1)$ is to be determined.
Consider two decreasing sequences of real numbers
\begin{equation*}R_n=c_2\frac{R}{4}+c_2\frac{R}{4^{n+1}}\quad \text{and}\quad k_n=\mu_+-\frac{\omega}{2^{m_2p}}-\frac{\omega}{2^{m_2p+n}}\qquad n=0,1,2,\cdots.\end{equation*}
We set $Q_n=K_{R_n}\times(\tilde t, t^{(0)}]$.
Take piecewise smooth cutoff
functions $\zeta_n(x)$ in $K_{R_n}$, such that $0\leq\zeta_n\leq 1$ in $K_{R_n}$, $\zeta_n\equiv 1$ in $K_{R_{n+1}}$
and $|D\zeta_n|\leq 4^{n+2}(c_2R)^{-1}$. Write down the energy estimates \eqref{Caccioppoli} for the truncated
functions $(u-k_n)_+$ over the cylinders $Q_n$.
Taking into account that
\begin{equation*}\partial_t\zeta_n\equiv 0\qquad\text{and}\qquad U(K_{R_n},\tilde t,t^{(0)},(u-k_n)_+\zeta_n^p)=0,\end{equation*}
we obtain
\begin{equation*}\begin{split}\esssup_{\tilde t<t<t^{(0)}}&\int_{K_{R_n}\times\{t\}}(u-k_n)_+^2\zeta_n^p dx
+\iint_{Q_n}|D(u-k_n)_+\zeta_n|^p dxdt
\\&\leq \gamma_1\iint_{Q_n}(u-k_n)_+^p|D\zeta_n|^pdxdt\leq  \frac{4^{n+3}\gamma_1
\omega^{p(3-p)}}{L_2^p2^{2m_2p}R^p}\iint_{Q_n}\chi_{[(u-k_n)_+>0]}dxdt.
\end{split}\end{equation*}
Set
\begin{equation*}A_n=\iint_{Q_n}\chi_{[(u-k_n)_+>0]}dxdt\qquad\text{and}\qquad Y_n=\frac{A_n}{|Q_n|}.\end{equation*}
Applying the parabolic Sobolev's inequality \eqref{Sobolevf}, we get
\begin{equation}\begin{split}\label{FFF1}\iint_{Q_n}&|(u-k_n)_+\zeta_n|^{p\frac{N+2}{N}}dxdt
\\&\leq \gamma \left(\esssup_{\tilde t<t<t^{(0)}}\int_{K_{R_n}\times\{t\}}(u-k_n)_+^2\zeta_n^2 dx\right)^{\frac{p}{N}}
\iint_{Q_n}|D(u-k_n)_+\zeta_n|^p dxdt
\\&\leq \gamma_{10} 4^{2n} \left(\omega^{(3-p)p}2^{-2pm_2}\right)^{1+\frac{p}{N}}R^{-p(1+\frac{p}{N})}A_n^{1+\frac{p}{N}},\end{split}\end{equation}
for a constant $\gamma_{10}$ depending only upon the data. The integral on the left-hand side is
estimated below by
\begin{equation}\begin{split}\label{FFF2}\iint_{Q_n}&|(u-k_n)_+\zeta_n|^{p\frac{N+2}{N}}dxdt
\geq (k_{n+1}-k_n)^{p\frac{N+2}{N}}A_{n+1}
\geq \left(\frac{\omega}{2^{m_2p+n+1}}\right)^{p\frac{N+2}{N}}A_{n+1}
\end{split}\end{equation}
Combining \eqref{FFF1} and \eqref{FFF2}, we have
\begin{equation*}\begin{split}A_{n+1}\leq \gamma_{10}2^{\frac{2(N+2)}{N}}
4^{6n}\omega^{-\frac{p^3}{N}+(\frac{3}{N}-1)p^2+2(1-\frac{1}{N})p}2^{m_2p(p-2)}R^{-p(1+\frac{p}{N})}A_n^{1+\frac{p}{N}}.\end{split}\end{equation*}
Taking into account that
$$|Q_n|=\gamma_9\nu L_2^N\left(\frac{\omega}{2^{m_2}}\right)^{N(p-2)}\left(\frac{1}{4}+\frac{1}{4^{n+1}}\right)^N
\omega^{(1-p)(2-p)}R^{N+p}$$
and $\nu<1$,
we obtain
\begin{equation*}Y_{n+1}\leq \gamma_{11}4^{6n}\nu^{\frac{p}{N}}Y_n^{1+\frac{p}{N}}\leq \gamma_{11}4^{6n}Y_n^{1+\frac{2}{N}},\end{equation*}
for a constant $\gamma_{11}$ depending only upon the data. At this point, we set
\begin{equation}\label{nu3}\nu_3=\gamma_{11}^{-\frac{N}{2}}4^{-\frac{3N^2}{2}}.\end{equation}
We now choose $\nu=\nu_3$. By Lemma \ref{3rd expand time 6.6},
$Y_0\leq \nu_3$. Applying the lemma on fast geometric convergence of
sequences, we deduce $Y_n\to 0$ as $n\to\infty$, which proves the lemma.
\end{proof}

\subsection{Iterative arguments: time propagation of positivity from $t^{(0)}$ to $t^{(1)}$}\label{t0t1 1st iter}
In this subsection we first remark that the time level $t^{(0)}$ could be lower than $\bar t$. So the estimate \eqref{DeGiorgi5}
is insufficient for the proof. We have to use an iterative argument to obtain the estimate similar to \eqref{DeGiorgi5}
in the cylinder with a larger time interval.

The starting point is a space propagation of positivity similar to Lemma \ref{dimension less lemma}.
Starting from \eqref{DeGiorgi5}, we set $R^{(0)}=\frac{1}{4}c_2R$ and introduce the change of variables
\begin{equation*}\begin{split}x'=\frac{x}{20R^{(0)}}\qquad\text{and}\qquad t'=\frac{t-t^{(0)}}{t^{(0)}-\tilde t}.\end{split}\end{equation*}
This transformation maps $K_{R^{(0)}}\times (\tilde t,t^{(0)}]\to K_{\frac{1}{20}}\times (-1,0]$ and
$K_{20R^{(0)}}\times (\tilde t,t^{(0)}]\to K_1\times (-1,0]$.
Moreover, we set the new functions
\begin{equation*}\begin{split}\tilde u(x',t')=u(x,t)\qquad\text{and}\qquad\bar u(x',t')=\left(\tilde u(x',t')-\mu_+\right)
\left(\frac{2^{m_2p}}{\omega}\right). \end{split}\end{equation*}
With these notations, the estimate \eqref{DeGiorgi5} implies
\begin{equation}\label{K11}\big|\{x'\in K_1:\bar u(x',t')<-1\}\big|\geq |K_{\frac{1}{20}}|=20^{-N}\end{equation}
for all $t'\in(-1,0]$.
On the other hand, we set $\tilde w(x',t')=w(x,t)$ and $v(x,t)$ in \eqref{frequently use weak form two phase stefan} can be written in the new variable as
\begin{equation*}
	\tilde v(x',t')=\begin{cases}
	\bar\nu,&\quad \text{on}\quad\{\bar u< -\mu_+\frac{2^{m_2p}}{\omega}\},\\
	-\tilde w(x',t'),&\quad \text{on}\quad\{\bar u=-\mu_+\frac{2^{m_2p}}{\omega}\}.
	\end{cases}
\end{equation*}
We rewrite the weak form \eqref{frequently use weak form two phase stefan} in terms of the new variables and new functions
\begin{equation}\begin{split}\label{dimensonal less weak form two phase stefan I1}-\frac{2^{m_2p}}{\omega}\int_{K_1} &\tilde v(\cdot,t')\chi_{\left\{\bar u\leq -\mu_+\frac{2^{m_2p}}{\omega}\right\}}\varphi(\cdot,t')dx'\bigg|_{t'=t_1}^{t_2}+
\frac{2^{m_2p}}{\omega}\int_{t_1}^{t_2}\int_{K_1} \tilde v\chi_{\left\{\bar u\leq -\mu_+\frac{2^{m_2p}}{\omega}\right\}}\frac{\partial \varphi}{\partial t'}dx'dt'
\\&+\int_{t_1}^{t_2}\int_{K_1}
\left(\varphi\frac{\partial \bar u}{\partial t'}+\bar A(x',t',\bar u,D\bar u)\cdot D\varphi\right)dx'dt'=0 \end{split}\end{equation}
for any $\varphi\in W_p(Q_1)$ and $[t_1,t_2]\subset (-1,0]$. We observe that the vector field $\bar A$ satisfies the structure condition
\begin{equation}\label{bar A I1}
	\begin{cases}
	\bar A(x',t',\bar u,D \bar u)\cdot D \bar u\geq C_0\Lambda_1|D \bar u|^{p},\\
	|\bar A(x',t',\bar u,D \bar u)|\leq C_1\Lambda_1|D \bar u|^{p-1},
	\end{cases}
\end{equation}
where
\begin{equation*}\Lambda_1=\left(\frac{\omega}{2^{m_2p}}\right)^{p-2}\frac{t^{(0)}-\tilde t}{(20R^{(0)})^p}.\end{equation*}
Recalling that
$t^{(0)}=\tilde t+\gamma_9\nu_3\omega^{(1-p)(2-p)}R^p$ and
$R^{(0)}=\frac{1}{4}c_2R$, we deduce
\begin{equation}\label{Lambda1}\Lambda_1=\frac{\gamma_9\nu_3}{(5L_2)^p}.\end{equation}
The constant $\Lambda_1$ depends only upon the data.
To simplify notation, we continue to write $(x,t)$ for the new
variables. In the same fashion as in the proof of Lemma \ref{subsolution lemma}, we conclude that
the truncated functions $(\bar u-k)_+$ are subsolutions to parabolic equations. In a precise way we have
\begin{equation}\begin{split}\label{subsolution I1}\int_{t_1}^{t_2}\int_{K_1}&
\varphi \frac{\partial (\bar u-k)_+}{\partial t}+\bar A(x,t,k+(\bar u-k)_+,D(\bar u-k)_+)\cdot D\varphi dxdt
\leq 0\end{split}\end{equation}
for all $k\in (-1,0)$, $t_1,t_2\in (-1,0)$ and all nonnegative $\varphi\in W_p(Q_1)$.
\begin{lemma}\label{dimension less lemma I1}Suppose that $\frac{3}{2}\leq p<2$. For any $\nu\in(0,1)$,
there exists $\bar m>1$ depending
only upon the data and $\nu$, such that
\begin{equation}\label{pro6.8formula}\big|\{x\in K_{\frac{3}{4}}:\bar u(x,t)\geq -2^{-\bar m}\}\big|\leq\nu|K_{\frac{3}{4}}|\end{equation}
for all $t\in \left(-\left(\frac{3}{4}\right)^p,0\right]$.
\end{lemma}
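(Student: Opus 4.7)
The plan is to mimic the proof of Lemma~\ref{dimension less lemma} almost verbatim, replacing its measure-theoretic input \eqref{positive small cube} with \eqref{K11} and working on cubes $K_1 \supset K_{3/4}$ in place of $K_4 \supset K_2$. The setting is already completely parallel: \eqref{dimensonal less weak form two phase stefan I1}--\eqref{subsolution I1} gives exactly the analogue of Lemma~\ref{subsolution lemma}, and the crucial structural quantity $\Lambda_1$ from \eqref{Lambda1} is a pure data constant independent of $\omega$ and $m_2$, so the resulting $\bar m$ will inherit this property. Since $k\in(-1/8,0)$ forces $\bar u>-1\ge -\mu_+ 2^{m_2 p}/\omega$ on $[\bar u>k]$, the singular $U$-type terms in \eqref{dimensonal less weak form two phase stefan I1} disappear and $(\bar u-k)_+$ is a genuine subsolution.

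First, I would introduce the same Alkhutov--Zhikov auxiliary functions
\begin{equation*}
\phi_k(\bar u)=\int_0^{(\bar u-k)_+}\frac{d\tau}{(-(1-\delta)k-\tau+V)^{p-1}},\qquad
\psi_k(\bar u)=\ln\frac{-(1-\delta)k+V}{-(1-\delta)k-(\bar u-k)_++V},
\end{equation*}
with $\delta\in(-1/8,0)$ and $V=(-k)(-\delta)^{p/(p-1)}$, and choose a space-time cutoff $\zeta(x,t)=\zeta_1(x)\zeta_2(t)$ supported in $Q_1$, identically $1$ on $K_{3/4}\times(-(3/4)^p,0]$, with convex superlevel sets $\{\zeta_1>-k\}$. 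Testing \eqref{subsolution I1} with $\varphi=\zeta^p/(-(1-\delta)k-(\bar u-k)_++V)^{p-1}$, Young's inequality and the bound $\phi_k(\bar u)\le\ln\frac{1-\delta}{-\delta}$ give an integrated estimate controlling $\int\zeta^p\phi_k\,dx$ and $\iint \zeta^p|D\psi_k|^p\,dxdt$ by a multiple of $(t_2-t_1)\ln\frac{1-\delta}{-\delta}$, with constant depending only on $C_0,C_1$.

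Next I would apply Lemma~\ref{Sobolev1} on each time slice. The lower bound \eqref{K11} guarantees $|\{x\in K_1:\psi_k(\bar u)(\cdot,t)=0\}\cap\{\zeta_1=1\}|\ge 20^{-N}$ for all $t\in(-1,0]$, yielding the Poincar\'e-type inequality needed to convert the integrated energy estimate into a Dini-type differential inequality analogous to \eqref{Dini}, with a structural constant $\bar\Lambda_1$ depending only on the data. From here the iteration on $Y_i=\sup_t\int_{K_1\cap[\bar u>-|\delta|^i]}\zeta^p\,dx$ proceeds exactly as before: splitting on whether an almost-maximizing time $t_0$ of $Y_{i+1}$ lies in the set $\mathcal{C}^+=\{t:\frac{d^-}{dt}\int \zeta^p\phi_k\,dx\ge 0\}$ or not, one obtains either $Y_{i+1}\le\nu$ directly (case $t_0\in\mathcal{C}^+$ or $t_0=t_*$), or the geometric decay $Y_{i+1}\le(1-|\delta|)Y_i$ (case $t_0>t_*$). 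Iterating $j\sim \ln(1/\nu)/|\delta|$ times and choosing $\bar m$ in analogy with \eqref{tilde m} yields \eqref{pro6.8formula}.

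The step I expect to be the main obstacle is the second case of the iteration, where one must show $f(\delta)>\delta^2$ in order to close $Y_{i+1}\le(1-|\delta|)Y_i$; this is the delicate manipulation of \eqref{relationdelta}--\eqref{relationdelta1} and is precisely where the assumption $p\ge 3/2$ enters. However the algebra is identical to that already carried out, so no new ideas are needed. Because $\bar\Lambda_1$ is a pure data constant, the final $\bar m=\exp\{(C/(\nu\bar\Lambda_1))^2\}$ depends only on the data and $\nu$, as claimed.
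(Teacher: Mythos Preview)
Your proposal is correct and follows essentially the same approach as the paper's own proof: the paper also mimics Lemma~\ref{dimension less lemma} verbatim, introducing the same auxiliary functions $\phi_k,\psi_k$, testing \eqref{subsolution I1} with the same $\varphi$, using \eqref{K11} together with Lemma~\ref{Sobolev1} slicewise to obtain the Dini-type inequality, and then invoking the remainder of the iteration from Lemma~\ref{dimension less lemma} to conclude with $\bar m=\exp\{(\gamma'/(\nu\Lambda_2))^2\}$ for a data constant $\Lambda_2$.
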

\begin{proof}We proceed similarly as in the proof of Lemma \ref{dimension less lemma}. To this end,
we introduce the auxiliary functions
\begin{equation*}\phi_k(\bar u)=\int_0^{(\bar u-k)_+}\frac{d\tau}{(-(1-\delta)k-\tau+V)^{p-1}}\end{equation*}
and
\begin{equation*}\psi_k(\bar u)=\ln\left(\frac{-(1-\delta)k+V}{-k(1-\delta)-(\bar u-k)_++V}\right)\end{equation*}
where $k,\ \delta\in (-\frac{1}{8},0)$ and $V= (-k)(-\delta)^{\frac{p}{p-1}}$.
Take a piecewise smooth, cutoff function $\zeta=\zeta_1(x)\zeta_2(t)$ in $Q_1$, such that $0\leq\zeta\leq1$ in $Q_1$,
$\zeta\equiv 1$ in $Q_{\frac{3}{4}}$, $\zeta\equiv 0$ on $\partial_PQ_1$, $|D\zeta_1|\leq 4$, $0\leq\partial_t\zeta_2\leq 4^p
$ and the sets $\{x\in K_1:\zeta_1(x)>-k\}$ are convex for all $k\in (-\frac{1}{8},0)$. In the weak formulation \eqref{subsolution I1}
take the test function
\begin{equation*}\varphi=\frac{\zeta^p}{(-(1-\delta)k-(\bar u-k)_++V)^{p-1}}.\end{equation*}
This gives
\begin{equation*}\begin{split}\int_{K_1}&
\zeta^p \phi_k(\bar u)dx\bigg|_{t=t_1}^{t_2}+\frac{1}{4}C_0\Lambda_1\int_{t_1}^{t_2}\int_{K_1}\zeta^p |D\psi_k(\bar u)|^pdxdt
\leq 16C_2(t_2-t_1)\ln\frac{1-\delta}{-\delta}\end{split}\end{equation*}
for any $[t_1,t_2]\in(-1,0]$.
Next, we observe from \eqref{K11} that
\begin{equation*}K_{\frac{1}{20}}\subset \{x\in K_1:\bar u(x,t)<-1\}\subset\{x\in K_1:\phi_k(\bar u)=0\}\cap\{x\in K_1:
\zeta_1(x)=1\}, \end{equation*}
for any $t\in(-1,0]$. Applying Sobolev's inequality \eqref{Sobolevf1} slicewise, we obtain
\begin{equation*}\int_{K_1}\zeta_1^p \psi_k(\bar u)^pdx\leq 20^{N-1}\gamma\int_{K_1}\zeta_1^p |D\psi_k(\bar u)|^pdx,\quad
\forall t\in(-1,0].\end{equation*}
With the same argument as in the proof of Lemma \ref{dimension less lemma}, we derive the estimate
\begin{equation*}\begin{split}\frac{d^-}{dt}\int_{K_1}&
\zeta^p \phi_k(\bar u)dx+ \Lambda_2\int_{K_1}\zeta^p \psi_k(\bar u)^pdx
\leq 16C_2
\ln\frac{1-\delta}{-\delta},\quad
\forall t\in(-1,0],\end{split}\end{equation*}
where the constant $\Lambda_2$ depends only upon the data. At this stage, we follow the
proof of Lemma \ref{dimension less lemma}
to conclude that there exists a constant $\gamma'$ depending only on the data, such that the following holds. If we choose
\begin{equation}\label{bar m I1}
\bar m=\bar m(\nu)=\exp\left\{\left(\frac{\gamma'}{\nu\Lambda_2}\right)^2\right\},\end{equation}
then the estimate \eqref{pro6.8formula} follows. Moreover, we observe that the constant $\bar m$ depends only upon the data.
\end{proof}
\begin{lemma}\label{DeGiorgi bar u I1}
There exist a constant $\bar m>1$ depending only upon the data and $p$, and a time level $t_\omega'\in\left(-\left(\frac{3}{4}\right)^p,0\right)
$ such that
\begin{equation}\label{space expanding bar u I1}\bar u(x,t)<-\frac{1}{2^{\bar m+1}}\qquad\text{a.e.}\quad\text{in}\quad
Q\left(-\frac{t_\omega'}{2^p},\frac{1}{2}\right),\end{equation}
and the constant $\bar m$ is stable as $p\to2$.
\end{lemma}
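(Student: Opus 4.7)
The plan is to mirror the proof of Lemma~\ref{DeGiorgi4}, but now in the dimensionless setting governed by \eqref{subsolution I1}--\eqref{bar A I1}, with the measure-theoretic input provided by Lemma~\ref{dimension less lemma I1}. First I fix a threshold $\bar\nu_*\in(0,1)$, depending only on the data, to be chosen later from the iteration constant. Applying Lemma~\ref{dimension less lemma I1} with $\nu=\bar\nu_*$ produces a number $\bar m=\bar m(\bar\nu_*)>1$ given by \eqref{bar m I1}, together with the measure estimate
\begin{equation*}
\big|\{x\in K_{\frac{3}{4}}:\bar u(x,t)\geq -2^{-\bar m}\}\big|\leq \bar\nu_*\,|K_{\frac{3}{4}}|
\qquad \forall\,t\in\bigl(-(3/4)^p,0\bigr].
\end{equation*}
Since $\Lambda_2$ depends only upon the data, this $\bar m$ is stable as $p\to 2$.

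Next I set up the De Giorgi scheme. Pick a time level $t_\omega'\in(-(3/4)^p,0)$ (for instance $t_\omega'=-(1/2)^p$) and consider the shrinking cylinders $Q_n=Q(-t_\omega'/2^p-(-t_\omega')/2^{p+pn},R_n)$ with
\begin{equation*}
R_n=\frac{1}{2}+\frac{1}{2^{n+2}},\qquad
k_n=-\frac{1}{2^{\bar m+1}}-\frac{1}{2^{\bar m+1+n}},\qquad n=0,1,2,\dots,
\end{equation*}
and piecewise smooth cutoffs $\zeta_n$ with $\zeta_n\equiv 1$ on $Q_{n+1}$, $|D\zeta_n|\leq 2^{n+3}$ and $0\leq\partial_t\zeta_n\leq c\,2^{pn}$. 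Since $(\bar u-k_n)_+$ satisfies the subsolution inequality \eqref{subsolution I1}, the associated Caccioppoli estimate admits no $U$-contribution (the first two terms in \eqref{dimensonal less weak form two phase stefan I1} drop out once $k_n>-1$ and $\mu_+>\omega/4$), so it reduces to
\begin{equation*}
\esssup_{t}\int_{K_{R_n}}(\bar u-k_n)_+^2\zeta_n^p\,dx
+\Lambda_1\iint_{Q_n}|D(\bar u-k_n)_+\zeta_n|^p\,dxdt
\leq \gamma\bigl(2^{pn}+2^{n p}\bigr)\iint_{Q_n}\chi_{[\bar u>k_n]}\,dxdt,
\end{equation*}
with $\gamma$ depending only upon the data through $\Lambda_1$ (which by \eqref{Lambda1} is bounded from below independently of $p$).

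Then the parabolic Sobolev inequality \eqref{Sobolevf} combined with the lower estimate
\begin{equation*}
\iint_{Q_n}|(\bar u-k_n)_+\zeta_n|^{p(N+2)/N}\,dxdt\geq (k_{n+1}-k_n)^{p(N+2)/N}A_{n+1}
\geq \Bigl(\tfrac{1}{2^{\bar m+n+2}}\Bigr)^{p(N+2)/N}A_{n+1}
\end{equation*}
produces, with $Y_n=A_n/|Q_n|$, a recursive inequality of the form $Y_{n+1}\leq C\,b^n Y_n^{1+2/N}$ with $C,b$ depending only upon the data (and stable as $p\to 2$, since all $p$-dependent factors collapse into bounded quantities on $[\tfrac{3}{2},2)$). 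At this point I choose $\bar\nu_*=C^{-N/2}b^{-N^2/2}$, which fixes $\bar m$ through \eqref{bar m I1}, and note that $Y_0\leq \bar\nu_*$ by Lemma~\ref{dimension less lemma I1}. The fast geometric convergence lemma then gives $Y_n\to 0$ and proves \eqref{space expanding bar u I1}.

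The main obstacle is verifying the $p\to 2$ stability. The constant $\bar m$ from Lemma~\ref{dimension less lemma I1} is stable because $\Lambda_2$ depends only on the data; the De Giorgi iteration constants $C,b$ are stable because $\Lambda_1$ in \eqref{Lambda1} is bounded below uniformly for $p\in[\tfrac{3}{2},2)$ and because the Sobolev constant in \eqref{Sobolevf} is $p$-independent by Lemma~\ref{Sobolev}. The only subtle point is ensuring that the factors $(k_{n+1}-k_n)^{p(N+2)/N}=2^{-(\bar m+n+2)p(N+2)/N}$ do not degenerate as $p\to 2$, which is automatic since they approach their $p=2$ values. Thus the final $\bar m$ is admissible in the stated sense.
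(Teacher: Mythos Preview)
Your overall scheme mirrors the paper, but there is a genuine gap in the choice of $t_\omega'$. You write ``Pick a time level $t_\omega'\in(-(3/4)^p,0)$ (for instance $t_\omega'=-(1/2)^p$)'' and then assert that the resulting De~Giorgi iteration constant $C$ depends only upon the data. This is not so. Tracking powers of $2^{-\bar m}$ through the Caccioppoli--Sobolev step, the right-hand side is of order $2^{-\bar m p(1+p/N)}2^{np(1+p/N)}A_n^{1+p/N}$ while the lower bound on the left-hand side carries $(k_{n+1}-k_n)^{p(N+2)/N}=2^{-(\bar m+n+2)p(N+2)/N}$; the mismatch leaves a factor $2^{\bar m p(2-p)/N}$ in the recursion
\[
Y_{n+1}\le C'\,2^{\bar m p(2-p)/N}\,b^{\,n}\,Y_n^{1+p/N}.
\]
With $t_\omega'$ fixed, $|Q_n|^{p/N}$ is of order one and cannot absorb this. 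The smallness threshold needed to launch the fast geometric convergence is then $\bar\nu_*\le c_1 2^{-\bar m(2-p)}$, but $\bar m=\bar m(\bar\nu_*)=\exp\{(\gamma'/(\bar\nu_*\Lambda_2))^2\}$ blows up as $\bar\nu_*\downarrow 0$, so the required inequality has no solution for fixed $p<2$. Your closing remark that the factors ``approach their $p=2$ values'' captures only the limit behaviour, not the circularity at a fixed $p$.

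The paper avoids this by choosing the time level intrinsically, namely $t_\omega'=-2^{(\bar m+1)(p-2)}(3/4)^p$. With this choice $|Q_n|\sim 2^{(\bar m+1)(p-2)}$, so $|Q_n|^{p/N}$ contributes exactly the compensating factor $2^{-\bar m p(2-p)/N}$, and the recursion collapses to $Y_{n+1}\le C_4\,b^{\,n}Y_n^{1+p/N}$ with $C_4$ depending only on the data. One can then fix $\nu_4$ as in \eqref{nu4}, set $\bar m=\bar m(\nu_4)$ via \eqref{bar m I1}, and the argument closes without circularity. This intrinsic rescaling of the time height is the same mechanism used in Lemma~\ref{DeGiorgi4}; it is not optional here either.
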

\begin{proof} For $\nu\in(0,1)$ to be determined later we take $\bar m>1$ according to \eqref{bar m I1}.
For $n=0,1,2,\cdots$, set
\begin{equation*}R_n=\frac{1}{2}+\frac{1}{2^{n+2}},\quad k_n=-\frac{1}{2^{\bar m+1}}
-\frac{1}{2^{\bar m+n+1}}\quad\text{and}\quad
Q_n= Q\left(\frac{-t_\omega'}{2^{p}}+\frac{-t_\omega'}{2^{p+pn}},R_n\right),\end{equation*}
where $t_\omega'=-2^{(\bar m+1)(p-2)}\left(\frac{3}{4}\right)^p$.
Take piecewise smooth cutoff functions $\zeta_n$ in $Q_n$, such that $0\leq\zeta_n\leq1$, $\zeta_n\equiv1$ in $Q_{n+1}$, $\zeta_n\equiv0$ on $\partial_PQ_n$,
$|D\zeta_n|\leq 2^{n+3}$ and $0<\partial_t\zeta_n\leq 2^{p+p(n+1)}(-t_\omega')^{-1}$.

In the weak formulation \eqref{dimensonal less weak form two phase stefan I1} take the
test function $\varphi=(\bar u-k_n)_+\zeta_n^p$ . Observe that the first
two terms vanish. By a standard argument, we derive a Caccioppoli's estimate for $(\bar u-k_n)_+$ over $Q_n$ as follows
\begin{equation*}\begin{split}\esssup_{2^{-p}t_\omega'+2^{-p-pn}t_\omega'<t<0}&\frac{1}{2}\int_{K_{R_n}\times\{t\}}(
\bar u-k_n)_{+}^2\zeta_n^p dx
+\frac{1}{2}C_0\Lambda_1\iint_{Q_n}|D(\bar u-k_n)_{+}|^p\zeta_n^p dxdt\\
&\leq 8C_0\left(\frac{C_1}{C_0}\right)^2\Lambda_1\iint_{Q_n}(\bar u-k_n)_+^p|D\zeta_n|^pdxdt+\frac{1}{2}
\iint_{Q_n}(\bar u-k_n)_{+}^2\frac{\partial\zeta_n^p}{\partial t}dxdt.
\end{split}\end{equation*}
Applying parabolic Sobolev's inequality \eqref{Sobolevf}, we obtain
\begin{equation}\begin{split}\label{SSSSS}
\iint_{Q_n}&|(\bar u-k_n)_{+}|^{p(1+\frac{2}{N})}\zeta_n^{p(1+\frac{2}{N})} dxdt\\&\leq \gamma
\left(\esssup_{2^{-p}t_\omega'+2^{-p-pn}t_\omega'<t<0}\int_{K_{R_n}\times\{t\}}(\bar u-k_n)_{+}^2\zeta_n^{2} dx\right)^{\frac{p}{N}}
\iint_{Q_n}|D(\bar u-k_n)_{+}\zeta_n|^p dxdt\\&
\leq C_32^{pn(1+\frac{p}{N})}\left(\frac{1}{2^{\tilde m}}\right)^{p+\frac{p^2}{N}}
\left(\iint_{Q_n}\chi_{[(\bar u-k_n)_+>0]}dxdt\right)^{1+\frac{p}{N}},\end{split}\end{equation}
for a constant $C_3$ depending only upon the data.
At this point, we set
\begin{equation*}A_n=\iint_{Q_n}\chi_{[(\bar u-k_n)_+>0]}dxdt\qquad\text{and}\qquad Y_n=\frac{A_n}{|Q_n|}.\end{equation*}
The left-hand side of \eqref{SSSSS} is estimated below by
\begin{equation*}\begin{split}\iint_{Q_n}&|(\bar u-k_n)_{+}|^{p(1+\frac{2}{N})}\zeta_n^{p(1+\frac{2}{N})} dxdt
\geq (k_{n+1}-k_n)^{p\frac{N+2}{N}}A_{n+1}
\geq \left(\frac{1}{2^{\bar m+n+2}}\right)^{p\frac{N+2}{N}}A_{n+1} .\end{split}\end{equation*}
Combining this estimate with \eqref{SSSSS}, we obtain
\begin{equation*}Y_{n+1}\leq C_44^{pn(1+\frac{2}{N})}Y_n^{1+\frac{p}{N}},\end{equation*}
for a constant $C_4$ depending only upon the data. At this point, we set
\begin{equation}\label{nu4}\nu_4=4^{-\frac{N(N+2)}{p}}C_4^{-\frac{N}{p}}.\end{equation}
Choose $\nu=\nu_4$, and hence $\bar m=\bar m(\nu_4)$, from this and \eqref{bar m I1}. From Lemma \ref{dimension less lemma I1},
$Y_0\leq\nu_4$. By the lemma on fast geometric convergence of sequences, we infer that $Y_n\to0$, as $n\to\infty$. This proves the lemma.
\end{proof}
Transforming back to the original function $u$ and original variables $(x,t)$, we obtain the following DeGiorgi-type result.
\begin{lemma}\label{DeGiorgiI1}Let $\bar m>1$ be the constant as in
 Lemma \ref{DeGiorgi bar u I1}. Then there exists a time level $t_\omega^{(0)}\in (\tilde t, t^{(0)})$ such that
\begin{equation}\label{space expanding I1}u(x,t)<\mu_+-\frac{\omega}{2^{m_2p+\bar m+1}}\qquad\text{a.e.}\quad\text{in}\quad
(0,t^{(0)})+ Q\left( \frac{t^{(0)}-t_\omega^{(0)}}{2^p},10R^{(0)}\right),\end{equation}
where
$t_\omega^{(0)}=t^{(0)}+(t^{(0)}-\tilde t)t_\omega'.$
\end{lemma}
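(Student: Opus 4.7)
The plan is to invert the change of variables set up at the beginning of Section~\ref{t0t1 1st iter} and apply Lemma~\ref{DeGiorgi bar u I1} directly: in the rescaled $(x',t')$ coordinates the conclusion of that lemma is already a pointwise upper bound on $\bar u$, so translating it back into the original $(x,t)$ variables should yield \eqref{space expanding I1} on the nose, with $t_\omega^{(0)}$ chosen to be the preimage of the rescaled time level $t_\omega'/2^p$.

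More concretely, I would unwind three correspondences in order. First, the spatial cube $\{|x'_i| \le \tfrac{1}{2}\}$ in $K_1$ pulls back under $x = 20R^{(0)}x'$ to $K_{10R^{(0)}}$, which is exactly the spatial component of the cylinder in \eqref{space expanding I1}. Second, the time interval $(t_\omega'/2^p,\, 0]$ pulls back under $t = t^{(0)} + (t^{(0)} - \tilde t)\,t'$ to $(t^{(0)} + (t_\omega'/2^p)(t^{(0)} - \tilde t),\, t^{(0)}]$; setting $t_\omega^{(0)} := t^{(0)} + (t^{(0)} - \tilde t)\,t_\omega'$ as in the statement, this is precisely the interval $(t^{(0)} - (t^{(0)} - t_\omega^{(0)})/2^p,\, t^{(0)}]$, which matches the time component of the target cylinder $(0,t^{(0)}) + Q((t^{(0)} - t_\omega^{(0)})/2^p,\, 10R^{(0)})$. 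Third, the rescaling $u = \mu_+ + (\omega/2^{m_2 p})\bar u$ turns the bound $\bar u < -2^{-(\bar m + 1)}$ into $u < \mu_+ - \omega/2^{m_2 p + \bar m + 1}$, giving the pointwise estimate.

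The only genuinely non-mechanical point is verifying that $t_\omega^{(0)}$ truly lies in the interval $(\tilde t, t^{(0)})$. The upper inequality $t_\omega^{(0)} < t^{(0)}$ is immediate from $t_\omega' < 0$. For the lower inequality one needs $t_\omega' > -1$; recalling the definition $t_\omega' = -2^{(\bar m + 1)(p-2)}(3/4)^p$ from Lemma~\ref{DeGiorgi bar u I1}, the hypotheses $p < 2$ and $\bar m > 0$ force both $2^{(\bar m + 1)(p-2)} < 1$ and $(3/4)^p < 1$, whence $|t_\omega'| < 1$. With this small check in hand, Lemma~\ref{DeGiorgi bar u I1} instantly yields \eqref{space expanding I1}. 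Since there is no new analysis beyond careful bookkeeping of the cylinder dimensions, I do not expect any real obstacle; the step is essentially a translation lemma whose role is to reintroduce the original intrinsic scales so that the iteration in later subsections can proceed.
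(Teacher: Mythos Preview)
Your proposal is correct and matches the paper's approach exactly: the paper gives no proof beyond the sentence ``Transforming back to the original function $u$ and original variables $(x,t)$, we obtain the following DeGiorgi-type result,'' and your careful unwinding of the three correspondences (spatial cube, time interval, function rescaling) together with the verification that $t_\omega'\in(-1,0)$ is precisely the bookkeeping that sentence implicitly invokes.
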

In order to obtain an expansion of positivity result at a higher time level $t^{(1)}$, we need the following lemma
that is similar to Lemma \ref{3rd expand time 6.6}.
\begin{lemma}\label{3rd expand time I1}For any $\nu\in(0,1)$, there exist a constant $\delta_1$ depending only upon the data and $p$
, and a time level
\begin{equation}\label{t1}t^{(1)}=t^{(0)}+\delta_1\nu\omega^{(1-p)(2-p)}R^p\end{equation}
such that
\begin{equation}\label{3rd expand time I1f}\big|\{x\in K_{6R^{(0)}}:u(x,t)>\mu_+-\frac{\omega}{2^{m_2p+\bar m+4}}\}\big|
<\nu |K_{6R^{(0)}}|\end{equation}
for any $t\in (t^{(0)}, t^{(1)})$, and the constant $\delta_1$ is stable as $p\to2$.
\end{lemma}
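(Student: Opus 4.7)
The plan is to mimic the proof of Lemma~\ref{3rd expand time 6.6} essentially verbatim, replacing its starting pointwise bound \eqref{starting} by the trace on the time slice $t=t^{(0)}$ of the DeGiorgi-type estimate \eqref{space expanding I1} supplied by Lemma~\ref{DeGiorgiI1}, namely
\begin{equation*}
u(x,t^{(0)}) < \mu_+ - \frac{\omega}{2^{m_2 p + \bar m + 1}} \qquad \text{for a.e. } x \in K_{10 R^{(0)}}.
\end{equation*}
This plays here the role that \eqref{starting} played in Lemma~\ref{3rd expand time 6.6}, so the entire logarithmic machinery carries over with only cosmetic adjustments of the truncation levels.

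Concretely, I would take $k = \mu_+ - \omega/2^{m_2 p + \bar m + 1}$, $c = \omega/2^{m_2 p + \bar m + 4}$ and $H_k^+ = \omega/2^{m_2 p + \bar m + 1}$, introduce the associated logarithmic function $\psi^+$ (which satisfies $\psi^+ \le 3\ln 2$ and $|(\psi^+)'|^{2-p} \le c^{p-2}$), and pick a piecewise smooth cutoff $\zeta=\zeta(x)$ on $K_{10 R^{(0)}}$ with $\zeta\equiv 1$ on $K_{6R^{(0)}}$ and $|D\zeta|\le C/R^{(0)}$. Because $k > \mu_+ - \omega/4 > 0$, the boundary term $U(K_{10R^{(0)}},t^{(0)},t^{(1)},2\psi^+(\psi^+)'\zeta^p)$ vanishes, and the initial slice integral $\int(\psi^+)^2\zeta^p\,dx\big|_{t=t^{(0)}}$ vanishes by the choice of $k$.

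Applying the logarithmic estimate \eqref{logarithmic} over $K_{10R^{(0)}}\times(t^{(0)},t^{(1)})$ then bounds the essential supremum on the left by a constant multiple of $(t^{(1)}-t^{(0)})\,c^{p-2}(R^{(0)})^{-p}|K_{6R^{(0)}}|$. Using $R^{(0)} = L_2 d_* R/4$ together with the identity $d_*^p = \omega^{p(p-2)}\,2^{m_2 p(2-p)}$, the product $c^{2-p}(R^{(0)})^p$ collapses to $C\,L_2^p\,\omega^{(1-p)(2-p)}\,2^{-(\bar m + 4)(2-p)}R^p$. Choosing
\begin{equation*}
\delta_1 = C'\,L_2^p\,2^{-(\bar m + 4)(2-p)}
\end{equation*}
with $C'$ depending only on the data therefore makes the right-hand side at most $\nu|K_{6R^{(0)}}|$ uniformly in $t\in(t^{(0)},t^{(1)})$. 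The lower bound $\psi^+ \ge 2\ln 2 > 1$ on the set $\{x\in K_{6R^{(0)}}: u > \mu_+ - \omega/2^{m_2p+\bar m +4}\}$ then yields \eqref{3rd expand time I1f} by integration on that set.

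The only point requiring real attention is the stability of $\delta_1$ as $p\to 2$: since $\bar m$ is independent of $p$ (see Lemma~\ref{DeGiorgi bar u I1}), the factor $2^{-(\bar m + 4)(2-p)}$ tends to $1$, while $L_2^p$ varies continuously with $p$ through \eqref{conditioin for L2 final}. Apart from that, the only delicate bookkeeping is the collapse of powers of $\omega$ and $2^{m_2}$ in $c^{2-p}(R^{(0)})^p$ via the definition of $d_*$, which produces exactly the exponent $(1-p)(2-p)$ prescribed in the statement.
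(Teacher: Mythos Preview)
Your proposal is correct and follows essentially the same route as the paper's proof: same truncation levels $k=\mu_+-\omega/2^{m_2p+\bar m+1}$ and $c=\omega/2^{m_2p+\bar m+4}$, same cutoff between $K_{6R^{(0)}}$ and $K_{10R^{(0)}}$, same vanishing of the $U$-term and of the initial slice via Lemma~\ref{DeGiorgiI1}, and the same collapse of $c^{2-p}(R^{(0)})^p$ into $C L_2^p\,2^{-(\bar m+4)(2-p)}\omega^{(1-p)(2-p)}R^p$, leading to the identical form of $\delta_1$. One small inaccuracy: Lemma~\ref{DeGiorgi bar u I1} does not say $\bar m$ is independent of $p$, only that it is stable as $p\to 2$; this is still enough for your conclusion that $2^{-(\bar m+4)(2-p)}\to 1$, so the stability of $\delta_1$ follows as you claim.
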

\begin{proof}Let $k=\mu_+-\frac{\omega}{2^{m_2p+\bar m+1}}$ and $c=\frac{\omega}{2^{m_2p+\bar m+4}}$.
We consider the logarithmic function
\begin{equation*}\begin{split}\psi^+=\ln^+\left(\frac{\frac{\omega}{2^{m_2p+\bar m+1}}}{\frac{\omega}{2^{m_2p+\bar m+1}}-(u-k)_++c}\right).
\end{split}\end{equation*}
Then we get $\psi^+\leq 8\ln2$ and
\begin{equation*}\begin{split}[(\psi^+)']^{2-p}\leq \left(\frac{1}{c}\right)^{2-p}
\leq \left(\frac{\omega}{2^{m_2p}}\right)^{p-2}2^{(\bar m+4)(2-p)}.\end{split}\end{equation*}
Choose a piecewise smooth cutoff function $\zeta(x)$, defined in $K_{10R^{(0)}}$, and satisfying $0\leq \zeta\leq1$ in $K_{10R^{(0)}}$,
$\zeta\equiv 1$ in $K_{6R^{(0)}}$ and $|D\zeta|\leq (4R^{(0)})^{-1}$. Such kind of cutoff functions can be chosen explicitly via the formulas
\eqref{zeta}-\eqref{zetai}.
Since $k>\mu_+-\frac{\omega}{4}$, we check at once that $U(K_{10R^{(0)}},t^{(0)},t^{(1)},2\psi^{+}\left(\psi^{+}\right)^\prime\zeta^p)=0$.
Then we obtain from \eqref{logarithmic} the logarithmic estimate
\begin{equation*}\begin{split}\esssup_{t^{(0)}<t<t^{(1)}}&\int_{K_{10R^{(0)}}\times\{t\}}\left(\psi^{+}\right)^2\zeta^p dx\leq
\int_{K_{10R^{(0)}}\times\{t^{(0)}\}}\left(\psi^{+}\right)^2\zeta^p dx+\gamma_2\int_{t^{(0)}}^{t^{(1)}}\int_{
K_{10R^{(0)}}}\psi^{+}|\left(\psi^{+}\right)^\prime|^{2-p}
|D\zeta|^pdxdt.\end{split}\end{equation*}
By Lemma \ref{DeGiorgiI1}, we see that
$\int_{K_{10R^{(0)}}\times\{t^{(0)}\}}\left(\psi^{+}\right)^2\zeta^p dx=0$
and therefore
\begin{equation*}\begin{split}\esssup_{t^{(0)}<t<t^{(1)}}\int_{K_{10R^{(0)}}\times\{t\}}\left(\psi^{+}\right)^2\zeta^p dx&\leq
8\gamma_2\left(\frac{\omega}{2^{m_2p}}\right)^{p-2}2^{(\bar m+4)(2-p)}(4R^{(0)})^{-p}(t^{(1)}-t^{(0)})
(\ln2)|K_{10R^{(0)}}|
\\&=8\frac{10^N\ln2}{6^N}L_2^{-p}\gamma_2\omega^{(1-p)(p-2)}2^{(\bar m+4)(2-p)}R^{-p}(t^{(1)}-t^{(0)})|K_{6R^{(0)}}|.\end{split}\end{equation*}
At this stage, we choose the time level
\begin{equation}\label{t1delta1}t^{(1)}=t^{(0)}+\delta_1\nu\omega^{(1-p)(2-p)}R^p\quad\text{where}\quad \delta_1=\frac{1}{8}\frac{6^NL_2^p}{10^N
\gamma_2 2^{(\bar m+4)(2-p)}
\ln2}.
\end{equation}
Combining the estimates above we infer that
\begin{equation*}\begin{split}\esssup_{t^{(0)}<t<t^{(1)}}&\int_{K_{10R^{(0)}}\times\{t\}}\left(\psi^{+}\right)^2\zeta^p dx\leq
\nu|K_{6R^{(0)}}|.\end{split}\end{equation*}
On the other hand, we introduce the smaller set
$$S=\{x\in K_{6R^{(0)}}:u(x,t)>\mu_+-\frac{\omega}{2^{m_2p+\bar m+4}}\}\subset K_{6R^{(0)}}.$$
In this set, $\zeta\equiv 1$ and $\psi^+\geq 2\ln2>1$. Then we conclude that
\begin{equation*}\begin{split}\int_{K_{10R^{(0)}}\times\{t\}}\left(\psi^{+}\right)^2\zeta^p dx\geq
\int_{S\times\{t\}}\left(\psi^{+}\right)^2\zeta^p dx\geq
\big|\{x\in K_{6R^{(0)}}:u(x,t)>\mu_+-\frac{\omega}{2^{m_2p+\bar m+4}}\}\big|
\end{split}\end{equation*}
for all $t\in (t^{(0)},t^{(1)})$ and the lemma follows with the choice of $\delta_1$ in \eqref{t1delta1}.
\end{proof}
With the help of Lemma \ref{3rd expand time I1}, we establish a DeGiorgi-type lemma and determine the value of $\nu$
and the time level $t^{(1)}$ in \eqref{t1}.
This result extends \eqref{DeGiorgi5}
to a larger time interval.
\begin{lemma}\label{DeGiorgi I1I1}
Let $\delta_1>0$ be the constant chosen according to \eqref{t1delta1}. Then there exists a
constant $\nu_5$ depending only upon the data and $p$ such that the following holds.
For the choice
$$t^{(1)}=t^{(0)}+\delta_1\nu_5\omega^{(1-p)(2-p)}R^p,$$
there holds
\begin{equation}\label{DeGiorgiI17}u(x,t)<\mu_+-\frac{\omega}{2^{m_2p+\bar m+5}}\qquad\text{a.e.}\quad\text{in}\quad K_{2R^{(0)}}\times(t^{(0)},t^{(1)}]\end{equation}
and the constant $\nu_5$ is stable as $p\to2$.
\end{lemma}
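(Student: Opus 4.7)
The plan is to run a standard De Giorgi iteration on the cylinders $Q_n = K_{R_n} \times (t^{(0)}, t^{(1)}]$, starting from the measure information provided by Lemma \ref{3rd expand time I1}. Specifically, I would introduce the decreasing sequences
\begin{equation*}
R_n = 2R^{(0)} + \frac{4R^{(0)}}{2^n}, \qquad k_n = \mu_+ - \frac{\omega}{2^{m_2 p + \bar m + 5}} - \frac{\omega}{2^{m_2 p + \bar m + 5 + n}},
\end{equation*}
together with piecewise smooth cutoff functions $\zeta_n(x)$ supported in $K_{R_n}$, identically $1$ on $K_{R_{n+1}}$, with $|D\zeta_n| \leq 2^{n+c}/R^{(0)}$ and $\partial_t\zeta_n \equiv 0$. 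Since $k_n > \mu_+ - \omega/4 \geq \mu_+/4 > 0$, the arguments already used in \S 5 (e.g.\ in the derivation of \eqref{claim2}) show that $U(K_{R_n}, t^{(0)}, t^{(1)}, (u-k_n)_+ \zeta_n^p) \leq 0$, so that $U$-term disappears from the Caccioppoli inequality \eqref{Caccioppoli}.

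Next, I would apply Caccioppoli on each $Q_n$ to get
\begin{equation*}
\esssup_{t^{(0)}<t<t^{(1)}}\int_{K_{R_n}\times\{t\}} (u-k_n)_+^2\zeta_n^p\,dx + \iint_{Q_n} |D(u-k_n)_+\zeta_n|^p\,dxdt \leq \gamma_1 \iint_{Q_n}(u-k_n)_+^p |D\zeta_n|^p\,dxdt,
\end{equation*}
bounding $(u-k_n)_+ \leq \omega/2^{m_2 p + \bar m + 4}$ on the set where it is positive. Then I would apply the parabolic Sobolev inequality \eqref{Sobolevf} exactly as in the proof of Proposition \ref{1st main result} and Lemma \ref{DeGiorgi initial}, combine with the elementary lower bound
\begin{equation*}
\iint_{Q_n} |(u-k_n)_+\zeta_n|^{p(N+2)/N}\,dxdt \geq (k_n - k_{n+1})^{p(N+2)/N} A_{n+1},
\end{equation*}
and, recalling that $|Q_n| \simeq (R^{(0)})^N \delta_1 \nu_5 \omega^{(1-p)(2-p)} R^p$ and $R^{(0)} = \tfrac{1}{4}c_2 R$, produce a recursive inequality of the form $Y_{n+1} \leq C\, b^n\, Y_n^{1+p/N}$, where $C$ and $b$ depend only on the data and remain bounded as $p\to 2$.

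The only delicate point, and where I expect stability in $p$ to need care, is the bookkeeping in that recursion: the factor $\omega^{(1-p)(2-p)}$ in $|Q_n|$ cancels precisely against the power of $\omega$ coming from $(u-k_n)_+^p |D\zeta_n|^p$ after using $d_*^p/d = \omega^{p-2}$ (cf.\ \eqref{dd*}), so that the final $C$ is $p$-stable, as is $\delta_1$ by Lemma \ref{3rd expand time I1}. Having obtained the recursion, I would set
\begin{equation*}
\nu_5 = C^{-N/p}\, b^{-(N/p)^2},
\end{equation*}
which depends only on the data and $p$ (and stays bounded away from $0$ as $p\to 2$), and choose $\nu = \nu_5$ in Lemma \ref{3rd expand time I1}, which forces $Y_0 \leq \nu_5$ on the initial cylinder $Q_0 \subset K_{6R^{(0)}}\times(t^{(0)}, t^{(1)})$. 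The geometric convergence lemma \cite[Chapter I, Lemma 4.1]{Di93} then yields $Y_n \to 0$, which is precisely \eqref{DeGiorgiI17}. The main obstacle is therefore not technical difficulty but careful verification that every constant appearing in the Sobolev and Caccioppoli steps, as well as $\bar m$, $\delta_1$, and ultimately $\nu_5$, is stable as $p \to 2$; this is the reason for the restriction $\frac{3}{2}\leq p < 2$ inherited from the previous lemmas.
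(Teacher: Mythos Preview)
Your proposal is correct and follows essentially the same approach as the paper: the same sequences $R_n$, $k_n$, space-only cutoffs $\zeta_n(x)$, Caccioppoli plus parabolic Sobolev, and fast geometric convergence with $\nu_5=\delta_2^{-N/p}b^{-N^2/p^2}$. One point the paper makes explicit, and which you should track for the sequel, is that after substituting the expression \eqref{t1delta1} for $\delta_1$ into the recursion constant $\delta_2$, the factors $2^{(\bar m+4)(2-p)}$ cancel exactly, so $\delta_2$ (and hence $\nu_5$) is in fact independent of $\bar m$; this is precisely what allows the \emph{same} $\nu_5$ to be reused throughout the iterations in \S\ref{t1t2}--\S\ref{tktk+1}.
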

\begin{proof}For $\nu\in(0,1)$ to be determined later we take $\bar m=\bar m(\nu)>1$ according to \eqref{bar m I1}.
For $n=0,1,2,\cdots$, set
\begin{equation*}R_n=2R^{(0)}+\frac{4R^{(0)}}{2^{n}},\quad k_n=\mu_+-\frac{\omega}{2^{m_2p+\bar m+5}}-\frac{\omega}{2^{m_2p+\bar m+5+n}}\quad \text{and}\quad Q_n=K_{R_n}\times(t^{(0)},t^{(1)}].\end{equation*}
Let $\zeta_n=\zeta_n(x)$ be defined via \eqref{zeta}-\eqref{zetai} with $r$ and $r'$ replaced by $R_{n+1}$ and $R_n$.
Then the cutoff function $\zeta_n$ satisfies $0\leq\zeta_n\leq 1$ in $K_{R_n}$, $\zeta_n\equiv 1$ in $K_{R_{n+1}}$
and $|D\zeta_n|\leq 2^n(2R^{(0)})^{-1}$. Taking into account that $U(K_{R_n},t^{(0)},t^{(1)},(u-k_n)_+\zeta_n^p)=0$,
we proceed similarly as in the proof of Proposition \ref{DeGiorgi initial}. Write down the energy
estimates \eqref{Caccioppoli} for $(u-k_n)_+$ over $Q_n$, we obtain
\begin{equation*}\begin{split}\esssup_{t^{(0)}<t<t^{(1)}}\int_{K_{R_n}\times\{t\}}(u-k_n)_+^2\zeta_n^p dx
+\iint_{Q_n}|D(u-k_n)_+\zeta_n|^p dxdt
\leq \gamma_1\iint_{Q_n}(u-k_n)_+^p|D\zeta_n|^pdxdt.
\end{split}\end{equation*}
Set $A_n=\iint_{Q_n}\chi_{[(u-k_n)_+>0]}dxdt$.
Applying the parabolic Sobolev's inequality \eqref{Sobolevf}, we deduce
\begin{equation*}\begin{split}\iint_{Q_n}|(u-k_n)_+\zeta_n|^{p\frac{N+2}{N}}dxdt&\leq \gamma
\iint_{Q_n}|D(u-k_n)_+\zeta_n|^p dxdt\left(\esssup_{t^{(0)}<t<t^{(1)}}\int_{K_{R_n}\times\{t\}}(u-k_n)_+^2\zeta_n^p dx\right)^{\frac{p}{N}}
\\&\leq \gamma\gamma_1^{1+\frac{p}{N}} \frac{2^{np(1+\frac{p}{N})}}{L_2^{p(1+\frac{p}{N})}} \left(\frac{\omega^{(3-p)p}}{2^{2pm_2}}\right)^{1+\frac{p}{N}}\left(\frac{1}{2^{\bar m+3}}\right)^{p(1+\frac{p}{N})}R^{-p(1+\frac{p}{N})}A_n^{1+\frac{p}{N}}.\end{split}\end{equation*}
The integral on the left-hand side is
estimated below by
\begin{equation*}\begin{split}\iint_{Q_n}|(u-k_n)_+\zeta_n|^{p\frac{N+2}{N}}dxdt\geq (k_{n+1}-k_n)^{p\frac{N+2}{N}}A_{n+1}
=\left(\frac{\omega}{2^{m_2p+\bar m+n+6}}\right)^{p\frac{N+2}{N}}A_{n+1}.\end{split}\end{equation*}
Combining the estimates above we see that
\begin{equation*}\begin{split}A_{n+1}\leq \gamma\gamma_1^{1+\frac{p}{N}}
2^{n(2p+\frac{2p}{N}+\frac{p^2}{N})}\omega^{-\frac{p^3}{N}+(\frac{3}{N}-1)p^2+2(1-\frac{1}{N})p}2^{m_2p(p-2)}R^{-p(1+\frac{p}{N})}
\frac{2^{p(1+\frac{p}{N})}4^{p\frac{N+2}{N}}}{L_2^{p(1+\frac{p}{N})}}\left(\frac{1}{2^{\bar m+4}}\right)^{p(\frac{p-2}{N})}A_n^{1+\frac{p}{N}}.\end{split}\end{equation*}
Set $Y_n=A_n/|Q_n|$ and we shall derive a recursive inequality. Keeping in mind that
\begin{equation*}\begin{split}|Q_n|=\left(2R^{(0)}+\frac{4R^{(0)}}{2^{n}}\right)^N(t^{(1)}-t^{(0)})=(2+2^{2-n})^N\frac{1}{4^N}
\left(\frac{\omega}{2^{m_2}}\right)^{(p-2)N}\delta_1\nu
\omega^{(1-p)(2-p)}R^{N+p},\end{split}\end{equation*}
we deduce
\begin{equation*}\begin{split}\frac{1}{2^N}
\left(\frac{\omega}{2^{m_2}}\right)^{(p-2)N}\delta_1\nu
\omega^{(1-p)(2-p)}R^{N+p}\leq|Q_n|\leq \frac{3^N}{2^N}
\left(\frac{\omega}{2^{m_2}}\right)^{(p-2)N}\delta_1\nu
\omega^{(1-p)(2-p)}R^{N+p}.\end{split}\end{equation*}
Moreover, this estimate implies
\begin{equation*}\frac{|Q_{n+1}|}{|Q_n|}\geq\frac{1}{3^N}.\end{equation*}
Since $0<\nu<1$, then we conclude that
\begin{equation*}Y_{n+1}\leq \frac{3^{N+p}}{2^p}\gamma\gamma_1^{1+\frac{p}{N}}(\delta_1\nu)^{\frac{p}{N}}2^{n(2p+\frac{2p}{N}+\frac{p^2}{N})}
\frac{2^{p(1+\frac{p}{N})}4^{p\frac{N+2}{N}}}{L_2^{p(1+\frac{p}{N})}}\left(\frac{1}{2^{\bar m+4}}\right)^{p(\frac{p-2}{N})}
Y_n^{1+\frac{p}{N}}= \delta_2b^{n}Y_n^{1+\frac{p}{N}},\end{equation*}
where
\begin{equation}\label{delta2b}b=2^{2p+\frac{2p}{N}+\frac{p^2}{N}}\quad\text{and}\quad
\delta_2:=\frac{3^{p+N}}{2^p}\gamma\gamma_1^{1+\frac{p}{N}}\delta_1^{\frac{p}{N}}
\frac{2^{p(1+\frac{p}{N})}4^{p\frac{N+2}{N}}}{L_2^{p(1+\frac{p}{N})}}\left(\frac{1}{2^{\bar m+4}}\right)^{p(\frac{p-2}{N})}.\end{equation}
At this point, we set $\nu_5=\delta_2^{-\frac{N}{p}}b^{-\frac{N^2}{p^2}}$. Recalling the definition of $\delta_1$ from \eqref{t1delta1}, we see that
\begin{equation*}\delta_2=\frac{3^{p+N}}{2^p}\gamma\gamma_1^{1+\frac{p}{N}}\left(\frac{6^N}{8^{3-p}10^N\gamma_1
\ln2}\right)^{\frac{p}{N}}
\frac{2^{p(1+\frac{p}{N})}4^{p\frac{N+2}{N}}}{L_2^{p}}\left(\frac{1}{2^{3}}\right)^{p(\frac{p-2}{N})},\end{equation*}
which is even independent of $\bar m$.
Therefore, we conclude that if
$Y_0\leq \delta_2^{-\frac{N}{p}}b^{-\frac{N^2}{p^2}}$
then $Y_n\to 0$ as $n\to\infty$. To this end, we choose $\nu=\nu_5$ and the lemma follows.
\end{proof}
\subsection{Iterative arguments: time propagation of positivity from $t^{(1)}$ to $t^{(2)}$}\label{t1t2}
Starting from \eqref{DeGiorgiI17}, we will repeat the argument of \S\ref{t0t1 1st iter} to obtain an estimate
similar to \eqref{DeGiorgiI17} in a space-time cylinder containing a higher
time level $t^{(2)}$. The argument is divided into four steps.
To start with, we set $R^{(1)}=2R^{(0)}$.

{\emph Step 1:}
We introduce the change of variables
\begin{equation*}\begin{split}x'=\frac{x}{20R^{(1)}}\qquad\text{and}\qquad t'=\frac{t-t^{(1)}}{t^{(1)}-t^{(0)}}.\end{split}\end{equation*}
This transformation maps $K_{R^{(1)}}\times (t^{(0)},t^{(1)})\to K_{\frac{1}{20}}\times (-1,0)$ and
$K_{20R^{(1)}}\times (t^{(0)},t^{(1)})\to Q_1$.
We introduce the new functions
\begin{equation*}\begin{split}\tilde u(x',t')=u(x,t)\qquad\text{and}\qquad\bar u(x',t')=\left(\tilde u(x',t')-\mu_+\right)
\left(\frac{2^{m_2p+\bar m+5}}{\omega}\right). \end{split}\end{equation*}
From \eqref{DeGiorgiI17}, we see that
$\big|\{x'\in K_1:\bar u(x',t')<-1\}\big|\geq 20^{-N}$
for all $t'\in(-1,0)$.
Set $\tilde w(x',t')=w(x,t)$ and $v(x,t)$ in \eqref{frequently use weak form two phase stefan} can be written in the new variable as
\begin{equation*}
	\tilde v(x',t')=\begin{cases}
	\bar\nu,&\quad \text{on}\quad\{\bar u< -\mu_+\frac{2^{m_2p+\bar m+5}}{\omega}\},\\
	-\tilde w(x',t'),&\quad \text{on}\quad\{\bar u=-\mu_+\frac{2^{m_2p+\bar m+5}}{\omega}\}.
	\end{cases}
\end{equation*}
We rewrite the weak form \eqref{frequently use weak form two phase stefan} in terms of the new variables and new functions
\begin{equation}\begin{split}\label{dimensonal less weak form two phase stefan I2}-\frac{2^{m_2p+\bar m+5}}{\omega}&\int_{K_1} \tilde v(\cdot,t')\chi_{\left\{\bar u\leq -\mu_+\frac{2^{m_2p+\bar m+5}}{\omega}\right\}}\varphi(\cdot,t')dx'\bigg|_{t'=t_1}^{t_2}\\&+
\frac{2^{m_2p+\bar m+5}}{\omega}\int_{t_1}^{t_2}\int_{K_1} \tilde v\chi_{\left\{\bar u\leq -\mu_+\frac{2^{m_2p+\bar m+5}}{\omega}\right\}}\frac{\partial \varphi}{\partial t'}dx'dt'
\\&\quad+\int_{t_1}^{t_2}\int_{K_1}
\left(\varphi\frac{\partial \bar u}{\partial t'}+\bar A(x',t',\bar u,D\bar u)\cdot D\varphi\right)dx'dt'=0 \end{split}\end{equation}
for any $\varphi\in W_p(Q_1)$ and $[t_1,t_2]\subset (-1,0]$. It is easy to check that
 the vector field $\bar A$ satisfies the structure condition
\begin{equation}\label{bar A I2}
	\begin{cases}
	\bar A(x',t',\bar u,D \bar u)\cdot D \bar u\geq C_0\Lambda_2|D \bar u|^{p},\\
	|\bar A(x',t',\bar u,D \bar u)|\leq C_1\Lambda_2|D \bar u|^{p-1},
	\end{cases}
\end{equation}
where
\begin{equation*}\Lambda_2=\left(\frac{\omega}{2^{m_2p+\bar m+5}}\right)^{p-2}\frac{t^{(1)}-t^{(0)}}{(20R^{(1)})^p}.\end{equation*}
According to \eqref{t1delta1}-\eqref{DeGiorgiI17}, we deduce that
\begin{equation}\label{Lambda2}\Lambda_2=\frac{2^{5(2-p)}\delta_1\nu_5}{10^p2^{\bar m(p-2)}L_2^p}=\frac{2^{-1-p}6^N\nu_5}{10^{N+p}\gamma_2
\ln2},\end{equation}
which depends only upon the data and $p$, and is stable as $p\to2$. For any $\nu\in(0,1)$, we proceed similarly as in the proof of Lemma
\ref{dimension less lemma I1} to conclude that there exists a constant $\gamma''$, depending only upon the data,
such that the following holds. For the constant $m'$ with the expression
\begin{equation}\begin{split}\label{mprime} m'=\exp\left\{\left(\frac{\gamma''}{\nu\Lambda_2}\right)^2\right\},
\end{split}\end{equation}
there holds
\begin{equation}\label{step1}\big|\{x'\in K_{\frac{3}{4}}: \bar u(x',t')>-2^{-m'}\}\big|\leq \nu|K_{\frac{3}{4}}|\quad\text{for\ \ all}\quad t'\in
\left(-\left(\tfrac{3}{4}\right)^{p},0\right].\end{equation}

{\emph Step 2:} Based on the estimate \eqref{step1}, we are now in a position to obtain an upper bound for $u$ in a space-time cylinder
with a larger cube in space.
By the proof of Lemma \ref{DeGiorgi bar u I1} and Lemma \ref{DeGiorgiI1}, we conclude that
there exists a time level $t_\omega^{(1)}\in (t^{(0)}, t^{(1)})$, such that
\begin{equation}\label{space expanding I2}u(x,t)<\mu_+-\frac{\omega}{2^{m_2p+\bar m+5+m'
+1}}\qquad\text{a.e.}\quad\text{in}\quad(0,t^{(1)})+ Q\left( \frac{t^{(1)}-t_\omega^{(1)}}{2^p},10R^{(1)}\right),\end{equation}
where
\begin{equation}\begin{split}\label{prime m I2} m'=\exp\left\{\left(\frac{\gamma''}{\nu_4\Lambda_2}\right)^2\right\}
\end{split}\end{equation}
and the constant $\nu_4$ is chosen according to \eqref{nu4}.

{\emph Step 3:} Let $\delta_1>0$ be the constant chosen according to \eqref{t1delta1}. For any $\nu\in(0,1)$, we claim that
there exist a constant $M_1>0$ depending only upon the data and $p$, and a time level
\begin{equation*}t^{(2)}=t^{(1)}+\delta_1\nu\omega^{(1-p)(2-p)}M_1R^p\end{equation*}
such that
\begin{equation}\label{I2 expand time}\big|\{x\in K_{6R^{(1)}}:u(x,t)>\mu_+-\frac{\omega}{2^{m_2p+\bar m+5+ m'+4}}\}\big|
<\nu|K_{6R^{(1)}}|\end{equation}
for any $t\in (t^{(1)}, t^{(2)}]$.
\begin{proof}[Proof of the claim] Let $k=\mu_+-\frac{\omega}{2^{m_2p+\bar m+5+ m'+1}}$ and $c=\frac{\omega}{2^{m_2p+\bar m+5+ m'+4}}$.
We consider the logarithmic function
\begin{equation*}\begin{split}\psi^+=\ln^+\left(\frac{\frac{\omega}{2^{m_2p+\bar m+5+ m'+1}}}{\frac{\omega}{2^{m_2p+\bar m+5+ m'+1}}-(u-k)_++c}\right).
\end{split}\end{equation*}
Then we have $\psi^+\leq 8\ln2$ and
\begin{equation*}\begin{split}[(\psi^+)']^{2-p}\leq \left(\frac{1}{c}\right)^{2-p}
\leq \left(\frac{\omega}{2^{m_2p}}\right)^{p-2}2^{(\bar m+5+m'+4)(2-p)}.\end{split}\end{equation*}
Let $\zeta=\zeta(x)$ be defined via \eqref{zeta}-\eqref{zetai} with $r$ and $r'$ replaced by $6R^{(1)}$ and $10R^{(1)}$.
It follows that $0\leq \zeta\leq1$ in $K_{10R^{(1)}}$,
$\zeta\equiv 1$ in $K_{6R^{(1)}}$ and $|D\zeta|\leq (4R^{(1)})^{-1}$. From \eqref{space expanding I2}, we deduce
$\int_{K_{10R^{(1)}}\times\{t^{(1)}\}}\left(\psi^{+}\right)^2\zeta^p dx=0$.
Taking into account that $U(K_{10R^{(1)}},t^{(1)},t^{(2)},2\psi^{+}\left(\psi^{+}\right)^\prime\zeta^p)=0$,
we obtain from \eqref{logarithmic} the logarithmic estimate
\begin{equation*}\begin{split}\esssup_{t^{(1)}<t<t^{(2)}}&\int_{K_{10R^{(1)}}\times\{t\}}\left(\psi^{+}\right)^2\zeta^p dx\leq
\gamma_2\int_{t^{(1)}}^{t^{(2)}}\int_{
K_{10R^{(1)}}}\psi^{+}|\left(\psi^{+}\right)^\prime|^{2-p}
|D\zeta|^pdxdt.\end{split}\end{equation*}
Recalling the definition of $\delta_1$ from \eqref{t1delta1}, we deduce
\begin{equation*}\begin{split}\esssup_{t^{(1)}<t<t^{(2)}}\int_{K_{10R^{(1)}}\times\{t\}}\left(\psi^{+}\right)^2\zeta^p dx&\leq
8\gamma_2\left(\frac{\omega}{2^{m_2p}}\right)^{p-2}2^{(\bar m+5+m'+4)(2-p)}(4R^{(1)})^{-p}(t^{(2)}-t^{(1)})
(\ln2)|K_{10R^{(1)}}|
\\&=\nu M_12^{(m'+5)(2-p)}2^{-p}|K_{6R^{(1)}}|.\end{split}\end{equation*}
At this stage, we set
\begin{equation}\label{M1}M_1=\frac{2^p}{2^{(m'+5)(2-p)}}.\end{equation}
Then we find that
\begin{equation*}\begin{split}\esssup_{t^{(1)}<t<t^{(2)}}\int_{K_{10R^{(1)}}\times\{t\}}\left(\psi^{+}\right)^2\zeta^p dx\leq
\nu|K_{6R^{(1)}}|.\end{split}\end{equation*}
On the other hand, we introduce the smaller set
$$\{x\in K_{6R^{(1)}}:u(x,t)>\mu_+-\frac{\omega}{2^{m_2p+\bar m+5+m'+4}}\}\subset K_{6R^{(1)}}.$$
In this set, $\zeta\equiv 1$ and $\psi^+\geq 2\ln2>1$. Then we conclude that
\begin{equation*}\begin{split}\int_{K_{10R^{(1)}}\times\{t\}}\left(\psi^{+}\right)^2\zeta^p dx
\geq
\big|\{x\in K_{6R^{(1)}}:u(x,t)>\mu_+-\frac{\omega}{2^{m_2p+\bar m+5+m'+4}}\}\big|
\end{split}\end{equation*}
for all $t\in (t^{(1)},t^{(2)})$ and the estimate \eqref{I2 expand time} follows.
\end{proof}

{\emph Step 4:} Take $\nu_5=\delta_2^{-\frac{N}{p}}b^{-\frac{N^2}{p^2}}$ be the constant as in Lemma \ref{DeGiorgi I1I1}.
We claim that for the choice
\begin{equation}\label{t2}t^{(2)}=t^{(1)}+\delta_1\nu_5\omega^{(1-p)(2-p)}M_1R^p,\end{equation}
there holds
\begin{equation}\label{I2 DeGiorgi}u(x,t)<\mu_+-\frac{\omega}{2^{m_2p+\bar m+5+m'+5}}
\qquad\text{a.e.}\quad\text{in}\quad K_{2R^{(1)}}\times(t^{(1)},t^{(2)}].\end{equation}
\begin{proof}[Proof of the claim] Proceed similarly as in the proof of Lemma \ref{DeGiorgi I1I1}, we
consider two decreasing sequences of real numbers
\begin{equation*}R_n=2R^{(1)}+\frac{4R^{(1)}}{2^{n}}\quad \text{and}\quad k_n=\mu_+-\frac{\omega}{2^{m_2p+\bar m+5+ m'+5}}-\frac{\omega}{2^{m_2p
+\bar m+5+m'+5+n}}\qquad n=0,1,2,\cdots\end{equation*}
and set $Q_n=K_{R_n}\times(t^{(1)},t^{(2)}]$.
Let $\zeta_n=\zeta_n(x)$ be defined via \eqref{zeta}-\eqref{zetai} with $r$ and $r'$ replaced by $R_{n+1}$ and $R_n$.
Then the cutoff function $\zeta_n$ satisfies $0\leq\zeta_n\leq 1$ in $K_{R_n}$, $\zeta_n\equiv 1$ in $K_{R_{n+1}}$
and $|D\zeta_n|\leq 2^n(2R^{(1)})^{-1}$. Taking into account that $U(K_{R_n},t^{(1)},t^{(2)},(u-k_n)_+\zeta_n^p)=0$,
we write the energy estimate \eqref{Caccioppoli} for $(u-k_n)_+$ over the cylinder $Q_n$ and obtain
\begin{equation*}\begin{split}\esssup_{t^{(1)}<t<t^{(2)}}\int_{K_{R_n}\times\{t\}}&(u-k_n)_+^2\zeta_n^p dx
+\iint_{Q_n}|D(u-k_n)_+\zeta_n|^p dxdt
\\&\leq \gamma_1\left(\frac{\omega}{2^{m_2p+\bar m+5+ m'+4}}\right)^p\frac{2^{np}}{(2R^{(1)})^p}A_n
= \gamma_1\left(\frac{\omega}{2^{m_2p+\bar m+4}}\right)^p\frac{2^{np}}{(2R^{(0)})^p}A_n\left(\frac{2^{-5p-m'p}}{2^p}\right),
\end{split}\end{equation*}
where $A_n=\iint_{Q_n}\chi_{[(u-k_n)_+>0]}dxdt$.
Applying the parabolic Sobolev's inequality \eqref{Sobolevf}, we have
\begin{equation*}\begin{split}\iint_{Q_n}|(u-k_n)_+\zeta_n|^{p\frac{N+2}{N}}&dxdt\leq \gamma
\iint_{Q_n}|D(u-k_n)_+\zeta_n|^p dxdt\left(\esssup_{t^{(1)}<t<t^{(2)}}\int_{K_{R_n}\times\{t\}}(u-k_n)_+^2\zeta_n^p dx\right)^{\frac{p}{N}}
\\&\leq \gamma\gamma_1^{1+\frac{p}{N}} \frac{2^{np(1+\frac{p}{N})}}{L_2^{p(1+\frac{p}{N})}} \left(\frac{\omega^{(3-p)p}}{2^{2pm_2}}\right)^{1+\frac{p}{N}}R^{-p(1+\frac{p}{N})}\left(\frac{1}{2^{\bar m+3}}\right)^{p(1+\frac{p}{N})}A_n^{1+\frac{p}{N}}\left(\frac{2^{-5p-m'p}}{2^p}\right)^{1+\frac{p}{N}}.\end{split}\end{equation*}
To estimate below for the left-hand side, we observe that
\begin{equation*}k_{n+1}-k_n=\frac{\omega}{2^{m_2p+\bar m+5+m'+n+6}}\end{equation*}
and therefore
\begin{equation*}\begin{split}\iint_{Q_n}|(u-k_n)_+\zeta_n|^{p\frac{N+2}{N}}dxdt\geq (k_{n+1}-k_n)^{p\frac{N+2}{N}}A_{n+1}
=\left(\frac{\omega}{2^{m_2p+\bar m+n+6}}\right)^{p\frac{N+2}{N}}A_{n+1}\left(\frac{1}{2^{ m'
+5}}\right)^{p\frac{N+2}{N}}.\end{split}\end{equation*}
Then we deduce that
\begin{equation*}\begin{split}A_{n+1}\leq \gamma\gamma_1^{1+\frac{p}{N}}
2^{n(2p+\frac{2p}{N}+\frac{p^2}{N})}\omega^{-\frac{p^3}{N}+(\frac{3}{N}-1)p^2+2(1-\frac{1}{N})p}&2^{m_2p(p-2)}R^{-p(1+\frac{p}{N})}
\frac{2^{p(1+\frac{p}{N})}4^{p\frac{N+2}{N}}}{L_2^{p(1+\frac{p}{N})}}\left(\frac{1}{2^{\bar m+4}}\right)^{p(\frac{p-2}{N})}
\\&\times A_n^{1+\frac{p}{N}}2^{(m'+5)p\frac{2-p}{N}}2^{-p(1+\frac{p}{N})}.\end{split}\end{equation*}
Taking into account that
\begin{equation*}\begin{split}|Q_n|=\left(2R^{(1)}+\frac{4R^{(1)}}{2^{n}}\right)^N(t^{(2)}-t^{(1)})=(2+2^{2-n})^N\frac{1}{4^N}
\left(\frac{\omega}{2^{m_2}}\right)^{(p-2)N}\delta_1\nu_5
\omega^{(1-p)(2-p)}R^{N+p}(2^NM_1),\end{split}\end{equation*}
then there holds
\begin{equation*}\begin{split}(2^NM_1)\frac{1}{2^N}
\left(\frac{\omega}{2^{m_2}}\right)^{(p-2)N}\delta_1\nu_5
\omega^{(1-p)(2-p)}R^{N+p}\leq|Q_n|\leq (2^NM_1)\frac{3^N}{2^N}
\left(\frac{\omega}{2^{m_2}}\right)^{(p-2)N}\delta_1\nu_5
\omega^{(1-p)(2-p)}R^{N+p}\end{split}\end{equation*}
and
\begin{equation*}\frac{|Q_{n+1}|}{|Q_n|}\geq\frac{1}{3^N}.\end{equation*}
Set $Y_n=A_n/|Q_n|$.
Recalling from the definition of $M_1$ and noting that $0<\nu_5<1$, we deduce
\begin{equation*}\begin{split}Y_{n+1}\leq \frac{3^{N+p}}{2^p}\gamma\gamma_1^{1+\frac{p}{N}}(\delta_1\nu_5)^{\frac{p}{N}}2^{n(2p+\frac{2p}{N}+\frac{p^2}{N})}
&\frac{2^{p(1+\frac{p}{N})}4^{p\frac{N+2}{N}}}{L_2^{p(1+\frac{p}{N})}}\left(\frac{1}{2^{\bar m+4}}\right)^{p(\frac{p-2}{N})}
Y_n^{1+\frac{p}{N}}2^{(m'+5)p\frac{2-p}{N}}2^{-p(1+\frac{p}{N})}(2^NM_1)^{\frac{p}{N}}\\&
\leq \delta_2b^{n}Y_n^{1+\frac{p}{N}},\end{split}\end{equation*}
since
\begin{equation}\label{important relation}2^{(m'+5)p\frac{2-p}{N}}2^{-p(1+\frac{p}{N})}(2^NM_1)^{\frac{p}{N}}
=2^{(m'+5)p\frac{2-p}{N}}2^{-p(1+\frac{p}{N})}2^p\left(\frac{2^p}{2^{(m'-5)(2-p)}}\right)^{\frac{p}{N}}=1.\end{equation}
Then we conclude that if
$Y_0\leq \nu_5=\delta_2^{-\frac{N}{p}}b^{-\frac{N^2}{p^2}}$
then $Y_n\to 0$ as $n\to\infty$. To this end, we choose $\nu=\nu_5$ in \eqref{I2 expand time} and the claim follows.
\end{proof}
\subsection{Iterative arguments: time propagation of positivity from $t^{(k)}$ to $t^{(k+1)}$ ($k\geq2$)}\label{tktk+1}
Starting from \eqref{I2 DeGiorgi}, we can repeat the argument of \S\ref{t1t2} at any time to obtain a sequence of time levels
$\{t^{(k)}\}_{k=1}^\infty$. Here and subsequently,
set
\begin{equation*}R^{(k)}=2^kR^{(0)},\quad s_k=m_2p+\bar m+5+(m'+5)(k-1)\quad\text{and}\quad t^{(k)}=t^{(k-1)}+\delta_1\nu_5\omega^{(1-p)(2-p)}M_1
^{k-1}R^p.\end{equation*}
We now claim that
\begin{equation}\label{Ik DeGiorgi}u(x,t)<\mu_+-\frac{\omega}{2^{s_k}}
\qquad\text{a.e.}\quad\text{in}\quad K_{R^{(k)}}\times(t^{(k-1)},t^{(k)}]\end{equation}
for any $k\geq2$.
\begin{proof}[Proof of \eqref{Ik DeGiorgi}] We first observe from \eqref{I2 DeGiorgi} that \eqref{Ik DeGiorgi} holds with $k=2$.
Assuming \eqref{Ik DeGiorgi} to hold for $k$, we will prove it for $k+1$. We proceed similarly as in \S\ref{t1t2} and divide the proof
into four steps.

{\emph Step 1:}
Let $(x',t')$ be the new variables defined by
\begin{equation*}\begin{split}x'=\frac{x}{20R^{(k)}}\qquad\text{and}\qquad t'=\frac{t-t^{(k)}}{t^{(k)}-t^{(k-1)}}.\end{split}\end{equation*}
This transformation maps $K_{R^{(k)}}\times (t^{(k-1)},t^{(k)})\to K_{\frac{1}{20}}\times (-1,0)$ and
$K_{20R^{(k)}}\times (t^{(k-1)},t^{(k)})\to Q_1$.
We introduce the new functions
\begin{equation*}\begin{split}\tilde u(x',t')=u(x,t)\qquad\text{and}\qquad\bar u(x',t')=\left(\tilde u(x',t')-\mu_+\right)
\left(\frac{2^{s_k}}{\omega}\right). \end{split}\end{equation*}
By induction hypothesis, we see that
$\big|\{x'\in K_1:\bar u(x',t')<-1\}\big|\geq 20^{-N}$
for all $t'\in(-1,0)$.
Set $\tilde w(x',t')=w(x,t)$ and $v(x,t)$ in \eqref{frequently use weak form two phase stefan} can be written in the new variable as
\begin{equation*}
	\tilde v(x',t')=\begin{cases}
	\bar\nu,&\quad \text{on}\quad\{\bar u< -\mu_+\frac{2^{s_k}}{\omega}\},\\
	-\tilde w(x',t'),&\quad \text{on}\quad\{\bar u=-\mu_+\frac{2^{s_k}}{\omega}\}.
	\end{cases}
\end{equation*}
We rewrite the weak form \eqref{frequently use weak form two phase stefan} in terms of the new variables and new functions
\begin{equation}\begin{split}\label{dimensonal less weak form two phase stefan I2}-\frac{2^{s_k}}{\omega}&\int_{K_1} \tilde v(\cdot,t')\chi_{\left\{\bar u\leq -\mu_+\frac{2^{s_k}}{\omega}\right\}}\varphi(\cdot,t')dx'\bigg|_{t'=t_1}^{t_2}+
\frac{2^{s_k}}{\omega}\int_{t_1}^{t_2}\int_{K_1} \tilde v\chi_{\left\{\bar u\leq -\mu_+\frac{2^{s_k
}}{\omega}\right\}}\frac{\partial \varphi}{\partial t'}dx'dt'
\\&\quad+\int_{t_1}^{t_2}\int_{K_1}
\left(\varphi\frac{\partial \bar u}{\partial t'}+\bar A(x',t',\bar u,D\bar u)\cdot D\varphi\right)dx'dt'=0 \end{split}\end{equation}
for any $\varphi\in W_p(Q_1)$ and $[t_1,t_2]\subset (-1,0]$. We check at once that
$\bar A$ satisfies the structure condition
\begin{equation}\label{bar A Ik}
	\begin{cases}
	\bar A(x',t',\bar u,D \bar u)\cdot D \bar u\geq C_0\Lambda_k|D \bar u|^{p},\\
	|\bar A(x',t',\bar u,D \bar u)|\leq C_1\Lambda_k|D \bar u|^{p-1},
	\end{cases}
\end{equation}
where
\begin{equation*}\Lambda_k=\left(\frac{\omega}{2^{s_k}}\right)^{p-2}\frac{t^{(k)}-t^{(k-1)}}{(20R^{(k)})^p}.\end{equation*}
Taking into account the definitions of $s_k$, $R^{(k)}$, $t^{(k)}$, $M_1$ and $\Lambda_2$,
we infer that
\begin{equation}\begin{split}\label{Lambdak}
\Lambda_k&=\left(\frac{\omega}{2^{m_2p+\bar m+5+(m'+5)(k-1)}}\right)^{p-2}
\frac{\delta_1\nu_5\omega^{(1-p)(2-p)}M_1^{k-1}R^p}{20^p2^{kp}4^{-p}L_2^p\left(\frac{\omega}{2^{m_2}}\right)^{(p-2)p}R^p}
\\&=\left(\frac{1}{2^{\bar m+5+(m'+5)(k-1)}}\right)^{p-2}\frac{\delta_1\nu_5}{20^p2^{kp}4^{-p}L_2^p}
\left(\frac{2^p}{2^{(m'+5)(2-p)}}\right)^{k-1}
\\&=\frac{2^{5(2-p)}\delta_1\nu_5}{10^p2^{\bar m(p-2)}L_2^p}=\Lambda_2.
\end{split}\end{equation}
At this point,
the proof now is exactly the same as in the step 1 in \S\ref{t1t2}.
For any $\nu\in(0,1)$, we choose $m'$ as in \eqref{mprime}
and conclude that
\begin{equation*}\big|\{x'\in K_{\frac{3}{4}}: \bar u(x',t')>-2^{-m'}\}\big|\leq \nu|K_{\frac{3}{4}}|\quad\text{for\ \ all}\quad t'\in
\left(-\left(\tfrac{3}{4}\right)^{p},0\right].\end{equation*}

{\emph Step 2:} With the previous result at hand, we proceed similarly as in the step 1 in \S\ref{t1t2}
to conclude that
there exists a time level $t_\omega^{(k)}\in (t^{(k-1)}, t^{(k)})$, such that
\begin{equation}\label{Ik space expanding I2}u(x,t)<\mu_+-\frac{\omega}{2^{s_k+m'
+1}}\qquad\text{a.e.}\quad\text{in}\quad(0,t^{(k)})+ Q\left( \frac{t^{(k)}-t_\omega^{(k)}}{2^p},10R^{(k)}\right),\end{equation}
where $m'$ is the same constant as in \eqref{prime m I2}.

{\emph Step 3:} For any $\nu\in(0,1)$, we claim that there exists a time level
$$t^{(k+1)}=t^{(k)}+\delta_1\nu\omega^{(1-p)(2-p)}M_1
^{k}R^p$$
such that
\begin{equation}\label{Ik I2 expand time}\big|\{x\in K_{6R^{(k)}}:u(x,t)>\mu_+-\frac{\omega}{2^{s_k+ m'+4}}\}\big|
<\nu|K_{6R^{(k)}}|\end{equation}
for any $t\in (t^{(k)}, t^{(k+1)}]$.
\begin{proof}[Proof of the claim] Let $k'=\mu_+-\frac{\omega}{2^{s_k+ m'+1}}$ and $c=\frac{\omega}{2^{s_k+ m'+4}}$.
We consider the logarithmic function
\begin{equation*}\begin{split}\psi^+=\ln^+\left(\frac{\frac{\omega}{2^{s_k+ m'+1}}}{\frac{\omega}{2^{s_k
+ m'+1}}-(u-k')_++c}\right).
\end{split}\end{equation*}
Taking into account the definition of $s_k$, we have $\psi^+\leq 8\ln2$ and
\begin{equation*}\begin{split}[(\psi^+)']^{2-p}\leq \left(\frac{1}{c}\right)^{2-p}
\leq \left(\frac{\omega}{2^{m_2p}}\right)^{p-2}2^{(\bar m+5+(m'+5)(k-1)+m'+4)(2-p)}.\end{split}\end{equation*}
Let $\zeta=\zeta(x)$ be defined via \eqref{zeta}-\eqref{zetai} with $r$ and $r'$ replaced by $6R^{(k)}$ and $10R^{(k)}$.
It is easily seen that $0\leq \zeta\leq1$ in $K_{10R^{(k)}}$,
$\zeta\equiv 1$ in $K_{6R^{(k)}}$ and $|D\zeta|\leq (4R^{(k)})^{-1}$. It follows from \eqref{Ik space expanding I2} that
$\int_{K_{10R^{(k)}}\times\{t^{(k)}\}}\left(\psi^{+}\right)^2\zeta^p dx=0$.
Taking into account that $U(K_{10R^{(k)}},t^{(k)},t^{(k+1)},2\psi^{+}\left(\psi^{+}\right)^\prime\zeta^p)=0$,
we obtain from \eqref{logarithmic} the logarithmic estimate
\begin{equation*}\begin{split}\esssup_{t^{(k)}<t<t^{(k+1)}}&\int_{K_{10R^{(k)}}\times\{t\}}\left(\psi^{+}\right)^2\zeta^p dx\leq
\gamma_2\int_{t^{(k)}}^{t^{(k+1)}}\int_{
K_{10R^{(k)}}}\psi^{+}|\left(\psi^{+}\right)^\prime|^{2-p}
|D\zeta|^pdxdt.\end{split}\end{equation*}
Keeping in mind the definitions of $\delta_1$ and $M_1$, we obtain
\begin{equation*}\begin{split}\esssup_{t^{(k)}<t<t^{(k+1)}}&\int_{K_{10R^{(k)}}\times\{t\}}\left(\psi^{+}\right)^2\zeta^p dx
\\&\leq
8\gamma_2\left(\frac{\omega}{2^{m_2p}}\right)^{p-2}2^{(\bar m+4)(2-p)}(4R^{(0)})^{-p}(t^{(k+1)}-t^{(k)})
(\ln2)|K_{10R^{(k)}}|\left(\frac{2^{(5+m')(2-p)k}}{2^{kp}}\right)
\\&=\nu M_1^k\left(\frac{2^{(5+m')(2-p)k}}{2^{kp}}\right)|K_{6R^{(k)}}|
=\nu |K_{6R^{(k)}}|.\end{split}\end{equation*}
On the other hand, we introduce the smaller set
$$\{x\in K_{6R^{(k)}}:u(x,t)>\mu_+-\frac{\omega}{2^{s_k+m'+4}}\}\subset K_{6R^{(k)}}.$$
In this set, $\zeta\equiv 1$ and $\psi^+\geq 2\ln2>1$. Then we conclude that
\begin{equation*}\begin{split}\int_{K_{10R^{(1)}}\times\{t\}}\left(\psi^{+}\right)^2\zeta^p dx
\geq
\big|\{x\in K_{6R^{(1)}}:u(x,t)>\mu_+-\frac{\omega}{2^{s_k+m'+4}}\}\big|
\end{split}\end{equation*}
for all $t\in (t^{(k)},t^{(k+1)})$ and the estimate \eqref{Ik I2 expand time} follows.
\end{proof}

{\emph Step 4:} Proof of the estimate \eqref{Ik DeGiorgi} for $k+1$.
For $n=0,1,2,\cdots$, set
\begin{equation*}R_n=2R^{(k)}+\frac{4R^{(k)}}{2^{n}},\quad k_n=\mu_+-\frac{\omega}{2^{s_k+ m'+5}}-\frac{\omega}{2^{s_k+m'+5+n}}\quad
\text{and}\quad Q_n=K_{R_n}\times(t^{(k)},t^{(k+1)}].\end{equation*}
Let $\zeta_n=\zeta_n(x)$ be defined via \eqref{zeta}-\eqref{zetai} with $r$ and $r'$ replaced by $R_{n+1}$ and $R_n$.
Then the cutoff function $\zeta_n$ satisfies $0\leq\zeta_n\leq 1$ in $K_{R_n}$, $\zeta_n\equiv 1$ in $K_{R_{n+1}}$
and $|D\zeta_n|\leq 2^n(2R^{(k)})^{-1}$. Taking into account that $U(K_{R_n},t^{(k)},t^{(k+1)},(u-k_n)_+\zeta_n^p)=0$,
we write the energy estimate \eqref{Caccioppoli} for $(u-k_n)_+$ over the cylinder $Q_n$ and obtain
\begin{equation*}\begin{split}\esssup_{t^{(k)}<t<t^{(k+1)}}\int_{K_{R_n}\times\{t\}}&(u-k_n)_+^2\zeta_n^p dx
+\iint_{Q_n}|D(u-k_n)_+\zeta_n|^p dxdt
\\&\leq \gamma_1\left(\frac{\omega}{2^{s_k+ m'+4}}\right)^p\frac{2^{np}}{(2R^{(k)})^p}A_n
= \gamma_1\left(\frac{\omega}{2^{m_2p+\bar m+4}}\right)^p\frac{2^{np}}{(2R^{(0)})^p}A_n\left(\frac{2^{-(5+m')kp}}{2^{kp}}\right),
\end{split}\end{equation*}
where $A_n=\iint_{Q_n}\chi_{[(u-k_n)_+>0]}dxdt$.
Applying the parabolic Sobolev's inequality \eqref{Sobolevf}, we have
\begin{equation*}\begin{split}\iint_{Q_n}&|(u-k_n)_+\zeta_n|^{p\frac{N+2}{N}}dxdt\leq \gamma
\iint_{Q_n}|D(u-k_n)_+\zeta_n|^p dxdt\left(\esssup_{t^{(k)}<t<t^{(k+1)}}\int_{K_{R_n}\times\{t\}}(u-k_n)_+^2\zeta_n^p dx\right)^{\frac{p}{N}}
\\&\quad\leq \gamma\gamma_1^{1+\frac{p}{N}} \frac{2^{np(1+\frac{p}{N})}}{L_2^{p(1+\frac{p}{N})}} \left(\frac{\omega^{(3-p)p}}{2^{2pm_2}}\right)^{1+\frac{p}{N}}R^{-p(1+\frac{p}{N})}\left(\frac{1}{2^{\bar m+3}}\right)^{p(1+\frac{p}{N})}A_n^{1+\frac{p}{N}}\left(\frac{2^{-(5+m')kp}}{2^{kp}}
\right)^{1+\frac{p}{N}}.\end{split}\end{equation*}
To estimate below for the left-hand side, we observe that
\begin{equation*}k_{n+1}-k_n=\frac{\omega}{2^{s_k+m'+n+6}}=
\frac{\omega}{2^{m_2p+\bar m+n+6}}\frac{1}{2^{(m'+5)k}}\end{equation*}
and therefore
\begin{equation*}\begin{split}\iint_{Q_n}|(u-k_n)_+\zeta_n|^{p\frac{N+2}{N}}dxdt\geq (k_{n+1}-k_n)^{p\frac{N+2}{N}}A_{n+1}
=\left(\frac{\omega}{2^{m_2p+\bar m+n+6}}\right)^{p\frac{N+2}{N}}A_{n+1}\left(\frac{1}{2^{ (m'
+5)k}}\right)^{p\frac{N+2}{N}}.\end{split}\end{equation*}
Then we deduce that
\begin{equation*}\begin{split}A_{n+1}\leq \gamma\gamma_1^{1+\frac{p}{N}}
2^{n(2p+\frac{2p}{N}+\frac{p^2}{N})}\omega^{-\frac{p^3}{N}+(\frac{3}{N}-1)p^2+2(1-\frac{1}{N})p}&2^{m_2p(p-2)}R^{-p(1+\frac{p}{N})}
\frac{2^{p(1+\frac{p}{N})}4^{p\frac{N+2}{N}}}{L_2^{p(1+\frac{p}{N})}}\left(\frac{1}{2^{\bar m+4}}\right)^{p(\frac{p-2}{N})}
\\&\times A_n^{1+\frac{p}{N}}2^{k(m'+5)p\frac{2-p}{N}}2^{-kp(1+\frac{p}{N})}.\end{split}\end{equation*}
Taking into account that
\begin{equation*}\begin{split}|Q_n|=\left(2R^{(k)}+\frac{4R^{(k)}}{2^{n}}\right)^N(t^{(k+1)}-t^{(k)})=(2+2^{2-n})^N\frac{1}{4^N}
\left(\frac{\omega}{2^{m_2}}\right)^{(p-2)N}\delta_1\nu_5
\omega^{(1-p)(2-p)}R^{N+p}(2^NM_1)^k,\end{split}\end{equation*}
then there holds
\begin{equation*}\begin{split}(2^NM_1)^k\frac{1}{2^N}
\left(\frac{\omega}{2^{m_2}}\right)^{(p-2)N}\delta_1\nu_5
\omega^{(1-p)(2-p)}R^{N+p}\leq|Q_n|\leq (2^NM_1)^k\frac{3^N}{2^N}
\left(\frac{\omega}{2^{m_2}}\right)^{(p-2)N}\delta_1\nu_5
\omega^{(1-p)(2-p)}R^{N+p}\end{split}\end{equation*}
and also
\begin{equation*}\frac{|Q_{n+1}|}{|Q_n|}\geq\frac{1}{3^N}.\end{equation*}
Set $Y_n=A_n/|Q_n|$.
Recalling from the definition of $M_1$ and noting that $\nu_5\in(0,1)$, we deduce from \eqref{important relation} that
\begin{equation*}\begin{split}Y_{n+1}&\leq \frac{3^{N+p}}{2^p}\gamma\gamma_1^{1+\frac{p}{N}}(\delta_1\nu_5)^{\frac{p}{N}}2^{n(2p+\frac{2p}{N}+\frac{p^2}{N})}
\frac{2^{p(1+\frac{p}{N})}4^{p\frac{N+2}{N}}}{L_2^{p(1+\frac{p}{N})}}\left(\frac{1}{2^{\bar m+4}}\right)^{p(\frac{p-2}{N})}
Y_n^{1+\frac{p}{N}}
\\&\qquad\qquad\times \left(2^{(m'+5)p\frac{2-p}{N}}2^{-p(1+\frac{p}{N})}(2^NM_1)^{\frac{p}{N}}\right)^k\\&
\leq \delta_2b^{n}Y_n^{1+\frac{p}{N}},\end{split}\end{equation*}
Then we conclude that if
$Y_0\leq \nu_5=\delta_2^{-\frac{N}{p}}b^{-\frac{N^2}{p^2}}$
then $Y_n\to 0$ as $n\to\infty$. To this end, we choose $\nu=\nu_5$ in \eqref{Ik I2 expand time} and the estimate \eqref{Ik DeGiorgi} follows
 for $k+1$.
\end{proof}
\subsection{Analysis of the second alternative concluded}
Now we are ready to proceed to obtain a DeGiorgi-type result similar to Proposition \ref{1st main result}.
We conclude from \eqref{DeGiorgi5}, \eqref{DeGiorgiI17} and \eqref{Ik DeGiorgi} that for any $j\geq 0$ there holds
\begin{equation}\label{DeGiorgi final}u(x,t)<\mu_+-\frac{\omega}{2^{s_j}}\qquad\text{a.e.}\quad\text{in}\quad
K_{\frac{1}{4}c_2R}\times(\tilde t,t^{(j)}].\end{equation}
Note that the time level $t^{(j)}$ should be taken so large that $\bar t\in (\tilde t,t^{(j)}]$.
For this purpose, we have to impose a condition that $M_1\geq1$. Taking into account that $m'$ is uniformly bounded in $p$,
we set $m''=\sup_{\frac{3}{2}<p\leq2}m'$ and impose a condition for $p$ as follows
\begin{equation}\label{artificial p}\frac{2(m''+5)}{m''+6}\leq p<2.\end{equation}
In this case, we see that $t^{(j)}\to +\infty$ as $j\to +\infty$ and we can find an integer $j_*$ with the expression
\begin{equation*}j_*=\left[\frac{2^{m_1(p-1)(2-p)}}{\delta_1\nu_5 }\right]+1
\end{equation*}
such that $\bar t\in (\tilde t,t^{(j_*)}]$.

In order to perform the iteration, we observe that any subcylinder
should be contained in the reference cylinder $\overline Q=Q(R,R^{\frac{1}{p}})\subset\Omega_T$. To this end,
we conclude that the estimate \eqref{DeGiorgi final} holds for $j=j_*$
under the assumption that $2^{j_*}\leq 20^{-1}R^{-\kappa}$ and $2^{-m_2}\omega \geq R$, where $\kappa=\frac{1}{2}(p-1-\frac{1}{p})$.
However, if $2^{j_*}> 20^{-1}R^{-\kappa}$, then there exists a constant $C'$ depending only upon the data and $p$ such that
\begin{equation*} \exp\left\{\exp (C'm_2)\right\}\geq
 R^{-\kappa}.\end{equation*}
According to \eqref{m2f}, there exist positive constants $\bar C$ and $\bar\alpha$ depending only upon the data and $p$ such that
\begin{equation*}\omega\leq \left(\frac{\bar C}{\ln\ln\ln\ln R^{-\kappa}}\right)^{\bar \alpha}.\end{equation*}
We observe that $\omega\downarrow 0$ as $R\downarrow 0$.

To proceed further, we now turn our attention to the case $K_{20R^{(j_*)}}\subset K_{R^{\frac{1}{p}}}$. If there exists $j\in\mathbb{N}^+$
such that $\bar t=t^{(j)}$, then \eqref{DeGiorgi final} is the desired estimate for the second alternative. If $\bar t\neq t^{(j)}$
for any $j\geq0$, then there exists $\bar j\in\mathbb{N}^+$
such that $t^{(\bar j)}<\bar t<t^{(\bar j+1)}$ and $\bar j+1<j_*$.
The iteration scheme only need a slight modification. We perform the iterative
arguments $\bar j$ times to obtain \eqref{DeGiorgi final} for $j=\bar j$. Starting from this estimate, we repeat the arguments from
step 1 to step 2 in \S\ref{tktk+1} to obtain the estimate \eqref{Ik I2 expand time} with $k=\bar j$. Furthermore, we can repeat the
the arguments in
step 3 and step 4 in \S\ref{tktk+1} with $t^{(k+1)}=\bar t$, since the proof still works with the condition
$t^{(k+1)}=t^{(k)}+\delta_1\nu_5\omega^{(1-p)(2-p)}M_1
^{k}R^p$ replaced by
$$t^{(k)}<t^{(k+1)}< t^{(k)}+\delta_1\nu_5\omega^{(1-p)(2-p)}M_1
^{k}R^p.$$
Then we conclude that the DeGiorgi-type
estimate \eqref{DeGiorgi final} holds in the cylinder $K_{\frac{1}{4}c_2R}\times(\tilde t,\bar t]$,
with $s_j$ replaced by $s_{\bar j+1}$.

In conclusion, we have proved the following proposition.
\begin{proposition}\label{2nd alt pro}
Suppose that the assumptions \eqref{R} and \eqref{artificial p} are in force. Then there exists a constant $s$ depending only upon the data and $\omega$, such that, either
\begin{equation*}u(x,t)<\mu_+-\frac{\omega}{2^{s}}\qquad\text{a.e.}\quad\text{in}\quad
Q\left(d\left(\frac{R}{2}\right)^p,d_*\left(\frac{R}{2}\right)\right),\quad \text{or}\qquad\omega\leq \left(\frac{\bar C}{\ln\ln\ln\ln R^{-\kappa}}\right)^{\bar \alpha}\end{equation*}
for some constants $\kappa$, $\bar C$ and $\bar\alpha$ depending only upon the data and $p$. Moreover, $s\uparrow +\infty$ as $\omega\downarrow 0$.
\end{proposition}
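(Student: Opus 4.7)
The plan is to assemble the conclusion from the iterative DeGiorgi-type estimates already established in \S\ref{m2}--\S\ref{tktk+1}, and then to check carefully when the whole iteration fits inside the reference cylinder $\overline Q$. Concretely, the starting point is the combined estimate
\begin{equation*}
u(x,t)<\mu_+-\frac{\omega}{2^{s_j}}\quad\text{a.e. in}\quad K_{\frac{1}{4}c_2R}\times(\tilde t,t^{(j)}],
\end{equation*}
valid for every $j\geq 0$, which fuses \eqref{DeGiorgi5}, \eqref{DeGiorgiI17} and \eqref{Ik DeGiorgi}. The goal is then to iterate until the upper endpoint $t^{(j)}$ reaches $\bar t$; once that is done, transforming back from $\bar t$ to the cylinder $Q(d(R/2)^p,d_*(R/2))$ is automatic, since by construction $K_{\frac{1}{4}c_2R}\supset K_{d_*R/2}$ and $[\tilde t,\bar t]$ contains the relevant time slice of the smaller cylinder.

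The first step is to count iterations. Since $t^{(k+1)}-t^{(k)}=\delta_1\nu_5\omega^{(1-p)(2-p)}M_1^{k}R^p$ and $\bar t-\tilde t\lesssim dR^p=\omega^{(1-p)(2-p)}2^{-m_1(p-1)(2-p)}R^p$, the summation of these increments forces a choice
\begin{equation*}
j_*=\left[\frac{2^{m_1(p-1)(2-p)}}{\delta_1\nu_5}\right]+1,
\end{equation*}
which requires $M_1\geq 1$; this is precisely why the artificial condition \eqref{artificial p} is imposed, guaranteeing $M_1=2^p/2^{(m'+5)(2-p)}\geq 1$ uniformly as $p\to2$. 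The second step is the geometric containment: the iteration builds cubes $K_{20R^{(j_*)}}=K_{20\cdot 2^{j_*}R^{(0)}}$, which must sit inside $K_{R^{1/p}}$. Combined with \eqref{R} and the definition $R^{(0)}=\frac{1}{4}c_2R$, this translates into the requirement $2^{j_*}\le \frac{1}{20}R^{-\kappa}$ with $\kappa=\frac{1}{2}(p-1-1/p)$. If this containment fails, one unwinds the definitions: from \eqref{m2f}, the forced growth of $j_*$ produces an inequality of the form $\exp(\exp(C'm_2))\ge R^{-\kappa}$, and tracing $m_2$ back through \eqref{nu0}, \eqref{tilde n}, \eqref{structure}, \eqref{lambda bar}, \eqref{tilde m} and \eqref{nu2} yields the iterated-logarithm decay on $\omega$ that appears in the alternative conclusion.

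The main technical nuisance, and I expect the most delicate point of the write-up, is that $\bar t$ will typically fall strictly between two consecutive time levels $t^{(\bar j)}<\bar t<t^{(\bar j+1)}$, so one cannot simply stop the iteration at $t^{(j_*)}$ and claim the result. The remedy, already sketched in the text preceding the proposition, is to run the iteration $\bar j$ times to reach \eqref{DeGiorgi final} with $j=\bar j$, and then perform one more half-iteration in which the Step 3 and Step 4 arguments of \S\ref{tktk+1} are executed with $t^{(\bar j+1)}$ replaced by $\bar t$. The crucial observation is that the key identity \eqref{important relation}, which drives the fast-geometric-convergence estimate, depends only on $M_1,m',p,N$ and not on the precise length of the time step, so the same $\nu_5$ and $\delta_2$ work verbatim when the interval $[t^{(\bar j)},\bar t]$ is shorter than the canonical step. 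This produces $u<\mu_+-\omega/2^{s_{\bar j+1}}$ on $K_{\frac{1}{4}c_2R}\times(\tilde t,\bar t]$.

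Finally, setting $s=s_{\bar j+1}$ (in the fitting case) or using the $\omega$-decay alternative (otherwise) gives the dichotomy in the statement. The monotonicity $s\uparrow\infty$ as $\omega\downarrow 0$ is transparent once one recalls the explicit formulas: $m_2$, $\tilde m$, $\bar m$ and $m'$ all grow (at least polynomially) as $\omega\to0$ through the dependencies \eqref{nu0}, \eqref{tilde m}, \eqref{bar m I1}, \eqref{mprime}, while $j_*$ carries a factor $2^{m_1(p-1)(2-p)}$ in its numerator and the base level $s_{\bar j+1}=m_2p+\bar m+5+(m'+5)\bar j$ therefore blows up. The only book-keeping left is to verify that $s$ indeed depends only on the data and $\omega$ (not on $R$), which follows because all of $\nu_0,\tilde n,\tilde m,\bar m,m',\delta_1,\nu_5,M_1$ were shown along the way to be $R$-independent.
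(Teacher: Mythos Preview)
Your proposal is correct and follows essentially the same approach as the paper's \S6.6: you assemble the iterative DeGiorgi estimates \eqref{DeGiorgi5}, \eqref{DeGiorgiI17}, \eqref{Ik DeGiorgi} into \eqref{DeGiorgi final}, use \eqref{artificial p} to ensure $M_1\geq 1$, compute the same $j_*$, impose the containment $2^{j_*}\le 20^{-1}R^{-\kappa}$ (with the iterated-logarithm alternative when it fails), and handle the case $t^{(\bar j)}<\bar t<t^{(\bar j+1)}$ by a truncated final step whose validity rests on \eqref{important relation}. The only cosmetic slip is a sign in the exponent when you write $dR^p=\omega^{(1-p)(2-p)}2^{-m_1(p-1)(2-p)}R^p$ (it should be $2^{+m_1(p-1)(2-p)}$), but your formula for $j_*$ is nonetheless the correct one.
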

\section{Proof of the main result concluded}
As we have already discussed in \S\ref{The intrinsic geometry},
the strategy of the proof is to study oscillation of the weak solution
in a sequence of nested and shrinking cylinders with common vertex and
prove that the essential oscillation converges to zero. We follow the notation used in \S\ref{The intrinsic geometry} and assume that
the common vertex coincides with $(0,0)$. To start with, we
set $R_1=R$,
\begin{equation*}Q_1=Q\left(\left(\frac{\omega}{2^{m_1}}\right)^{(1-p)(2-p)}\left(\frac{R_1}{2}\right)^p,
\left(\frac{\omega}{2^{m_2}}\right)^{p-2}\left(\frac{R_1}{2}\right)\right)\qquad\text{and}
\qquad \overline Q_1=Q(R_1,R_1^{\frac{1}{p}}).\end{equation*}
From Proposition \ref{1st main result} and \ref{2nd alt pro}, we conclude that
there exists $\sigma(\omega)$ such that either
\begin{equation*}\essosc_{Q_1}u\leq\sigma(\omega)\omega,\quad Q_1\subset \overline Q_1,\qquad\text{or}\qquad
\omega\leq \left(\frac{\bar C}{\ln\ln\ln\ln R_1^{-\kappa}}\right)^{\bar \alpha}\end{equation*}
and $\sigma(\omega)$ satisfies $\sigma(\omega)<1$ and $\sigma(\omega)\uparrow 1$ as $\omega\downarrow 0$.
Next, we set $\omega_1=\sigma(\omega)\omega$, $R_2=2^{-p}R_1^p$ and construct the reference parabolic cylinder $\overline Q_2=Q(R_2,R_2^{\frac{1}{p}})$.
We see that $\overline Q_2\subset Q_1\cap \overline Q_1$ and $\essosc_{\overline Q_2}u\leq \omega$. Moreover, we choose the cylinder
$Q_2$ by
\begin{equation*}Q_2
=\begin{cases}
Q\left(\left(\frac{\omega_1}{2^{m_1(\omega_1)}}
\right)^{(1-p)(2-p)}\left(\frac{R_2}{2}\right)^p,\left(\frac{\omega_1}{2^{m_2(\omega_1)}}\right)^{p-2}\left(\frac{R_2}{2}\right)
\right),&\quad\text{if}\quad \essosc_{\overline Q_2}u\leq \omega_1,\\
Q\left(\left(\frac{\omega}{2^{m_1(\omega)}}
\right)^{(1-p)(2-p)}\left(\frac{R_2}{2}\right)^p,\left(\frac{\omega}{2^{m_2(\omega)}}\right)^{p-2}\left(\frac{R_2}{2}\right)
\right),&\quad\text{if}\quad \essosc_{\overline Q_2}u> \omega_1.
	\end{cases}
\end{equation*}
At this point, we apply Proposition \ref{1st main result} and \ref{2nd alt pro} again, with $Q_1$ and $\overline Q_1$ replaced by
$Q_2$ and $\overline Q_2$, to obtain
either
\begin{equation*}\essosc_{Q_2}u\leq\max\{\sigma(\omega_1)\omega_1,\omega_1\}=\omega_1,\quad Q_2\subset\overline Q_2,\end{equation*}
\begin{equation*}\text{or}\quad\omega_1\leq \left(\frac{\bar C}{\ln\ln\ln\ln R_2^{-\kappa}}\right)^{\bar \alpha}
\quad\text{and}\quad \essosc_{\overline Q_2}u\leq \omega_1,
\quad\text{or}\quad\omega\leq \left(\frac{\bar C}{\ln\ln\ln\ln R_2^{-\kappa}}\right)^{\bar \alpha}.\end{equation*}
We continue this process to find two sequences $\{R_n\}_{n=1}^\infty$ and $\{\omega_n\}_{n=1}^\infty$ such that
\begin{equation*}R_{n+2}=2^{-p}R_{n+1}^p\qquad\text{and}\qquad\omega_{n+1}=\sigma(\omega_n)\omega_n,\qquad n=1,2,\cdots.\end{equation*}
Then we have $R_n\downarrow 0$ as $n\to\infty$.
From \cite[Page 222]{U1997}, we see that $\omega_n\downarrow 0$ as $n\to\infty$.
With these choices, we define $\overline Q_{n+1}=Q(R_{n+1},R_{n+1}^{\frac{1}{p}})$ and determine the cylinder $Q_{n+1}$ by
\\
\begin{equation*}Q_{n+1}
=\begin{cases}
Q\left(\left(\frac{\omega_n}{2^{m_1(\omega_n)}}
\right)^{(1-p)(2-p)}\left(\frac{R_{n+1}}{2}\right)^p,\left(\frac{\omega_n}{2^{m_2(\omega_n)}}\right)^{p-2}\left(\frac{R_{n+1}}{2}\right)
\right),&\text{if}\quad \essosc_{\overline Q_{n+1}}u\leq \omega_n,\\
Q\left(\left(\frac{\omega_{n-1}}{2^{m_1(\omega_{n-1})}}
\right)^{(1-p)(2-p)}\left(\frac{R_{n+1}}{2}\right)^p,\left(\frac{\omega_{n-1}}{2^{m_2(\omega_{n-1})}}\right)^{p-2}
\left(\frac{R_{n+1}}{2}\right)\right),&\text{if}\quad \omega_n<\essosc_{\overline Q_{n+1}}u\leq \omega_{n-1},\\
\qquad\qquad\qquad\qquad\qquad\vdots \\
Q\left(\left(\frac{\omega}{2^{m_1(\omega)}}
\right)^{(1-p)(2-p)}\left(\frac{R_{n+1}}{2}\right)^p,\left(\frac{\omega}{2^{m_2(\omega)}}\right)^{p-2}\left(\frac{R_{n+1}}{2}\right)
\right),&\text{if}\quad \omega_1<\essosc_{\overline Q_{n+1}}u\leq\omega,
	\end{cases}
\end{equation*}
\\
where $n=2,3,\cdots.$ These cylinders are nested with common vertex $(0,0)$ and shrinking to this point as $n\to\infty$.
Next, we remark that there is no any subsequence $\{n_k\}_{k=1}^\infty$ such that $n_k\to\infty$ and
\begin{equation*}\omega \leq
\left(\frac{\bar C}{\ln\ln\ln\ln R_{n_k}^{-\kappa}}\right)^{\bar \alpha},\qquad k=1,2,\cdots.\end{equation*}
This is because if this estimate holds true for any $k=1,2,\cdots$, then $\omega=0$, which contradicts to the assumption $\omega>0$.
Repeated application of Proposition \ref{1st main result} and \ref{2nd alt pro}, we conclude that
there exist subsequences $\{n_k\}_{k=1}^\infty$ and $\{n_k'\}_{k=1}^\infty$ such that either
\begin{equation*}\essosc_{Q_{n_k}}u\leq \omega_{n_k'},\quad\text{or}\quad
\essosc_{\overline Q_{n_k}}u\leq \omega_{n_k'}\leq
\left(\frac{\bar C}{\ln\ln\ln\ln R_{n_k}^{-\kappa}}\right)^{\bar \alpha}.\end{equation*}
Therefore, we conclude that either $\essosc_{Q_{n_k}}u\downarrow 0$ or $\essosc_{\overline Q_{n_k'}}u\downarrow 0$ as $k\to\infty$. This proves the theorem.
\section*{Acknowledgement}
The author wishes to thank  Eurica Henriques, Peter Lindqvist, Irina Markina and Jos\'e Miguel Urbano for the valuable discussions.

\bibliographystyle{abbrv}

\begin{thebibliography}{9}

\bibitem{AZ}
\newblock Yu. A. Alkhutov, V. V. Zhikov,
\newblock H\"older continuity of solutions of parabolic equations with variable nonlinearity exponent,
\newblock  Tr. Semim. im. I. G. Petrovskogo, 28, 2011, 8-74; J. Math. Sci. (N. Y.), 179:3, 347-389, (2011).


\bibitem{CE}
\newblock L. A. Caffarelli, L. C. Evans,
\newblock Continuity of the temperature in the two-phase Stefan
problem,
\newblock Arch. Rat. Mech. Anal. 81, 199-220, (1983).




\bibitem{CD}
\newblock Y. Z. Chen, E. DiBenedetto,
\newblock On the local behaviour of solutions of singular parabolic equations,
\newblock Arch. Rat. Mech. Anal. 103, 319-345, (1988).


\bibitem{Di82}
\newblock E. DiBenedetto,
\newblock Continuity of weak solutions to certain singular parabolic equations,
\newblock Ann. Mat. Pura Appl. (IV) 130, 131-176, (1982).



\bibitem{Di93}
\newblock E. DiBenedetto,
\newblock Degenerate Parabolic Equations,
\newblock Universitext New York, Springer-Verlag. xv 387, (1993).









\bibitem{HU}
\newblock E. Henriques, J. M. Urbano,
\newblock On the Doubly Singular Equation $\gamma(u)_t=\Delta_pu$,
\newblock  Comm. Partial Differential Equations 30, 5-6, 919-955, (2005).

\bibitem{LSU}
\newblock O. A. Ladyzhenskaya, V. A. Solonnikov, and N. N. Uraltseva,
\newblock Linear and Quasilinear
Equations of Parabolic Type,
\newblock A.M.S. Transl. Math. Monog. Vol. 23. Providence:
A.M.S. (1968).

\bibitem{R}
\newblock J. F. Rodrigues,
\newblock Variational methods in the Stefan problem,
\newblock Phase
Transitions and Hysteresis. Vol. 1584. Berlin: Springer-Verlag, 147-212, (1994).

\bibitem{U1997}
\newblock J. M. Urbano,
\newblock A free boundary problem with convection for the $p$-Laplacian,
\newblock Rend.
Mat. Appl. 17, 1-19, (1997).

\bibitem{U2000}
\newblock J. M. Urbano,
\newblock Continuous solutions for a degenerate free boundary problem,
\newblock Ann.
Mat. Pura Appl. 178, 195-224, (2000).

\end{thebibliography}

\end{document}